\documentclass[10pt]{amsart}

\usepackage[T1]{fontenc}
\usepackage[margin=1in]{geometry}
\usepackage{amsthm}
\usepackage{amsmath}
\usepackage{amssymb}
\usepackage{tikz-cd}
\usepackage{enumitem}
\usepackage{scalerel}
\usepackage[hidelinks]{hyperref}

\theoremstyle{plain}
\newtheorem{thm}{Theorem}[section]
\newtheorem{lem}[thm]{Lemma}
\newtheorem{cor}[thm]{Corollary}
\newtheorem{prop}[thm]{Proposition}
\newtheorem{mainthm}{Theorem}

\theoremstyle{definition}

\newtheorem{defn}[thm]{Definition}
\newtheorem{ex}[thm]{Example}
\newtheorem*{rem}{Remark}

\DeclareMathAlphabet\mathbfcal{OMS}{cmsy}{b}{n}

\renewcommand{\sfdefault}{iwona}
\DeclareMathAlphabet{\mathbfsf}{\encodingdefault}{\sfdefault}{bx}{n}
\DeclareMathOperator*{\colim}{colim}
\DeclareMathOperator*{\hocolim}{hocolim}

\let\oldsimeq\simeq
\renewcommand{\simeq}{\mathrel{\scaleobj{1.1}{\oldsimeq}}}

\newcommand{\Hom}{\mathrm{Hom}}
\DeclareMathOperator{\uHom}{\,\underline{\!\Hom\!}\,}

\newcommand{\op}{^{\mathrm{op}}}
\newcommand{\st}{\mathrm{st}}

\newcommand{\cl}{\mathrm{cl}}

\newcommand{\Mod}{\mathbfsf{Mod}}

\newcommand{\Pos}{\mathbfsf{Pos}}

\newcommand{\Fun}{\mathbfsf{Fun}}

\newcommand{\Sh}{\mathbfsf{Sh}}
\newcommand{\CoSh}{\mathbfsf{CoSh}}

\newcommand{\Ch}{\mathbfsf{Ch}}

\newcommand{\Asm}{\mathrm{Asm}}

\newcommand{\ev}{\mathrm{ev}}

\newcommand{\lc}{\mathrm{lc}}
\newcommand{\gc}{\mathrm{gc}}

\newcommand{\hlc}{\mathrm{hlc}}
\newcommand{\Ind}{\mathrm{Ind}}

\newcommand{\Ho}{\mathrm{Ho}}

\newcommand{\cofib}{\mathrm{cofib}}

\newcommand{\Lan}{\mathrm{Lan}}

\newcommand{\id}{\mathrm{id}}

\newcommand{\bbL}{\mathbb{L}}

\newcommand{\bbZ}{\mathbb{Z}}

\newcommand{\cA}{\mathcal{A}}
\newcommand{\cB}{\mathcal{B}}

\newcommand{\cE}{\mathcal{E}}

\newcommand{\cM}{\mathcal{M}}
\newcommand{\cN}{\mathcal{N}}

\newcommand{\cP}{\mathcal{P}}

\newcommand{\cX}{\mathcal{X}}
\newcommand{\cY}{\mathcal{Y}}

\newcommand{\cbB}{\mathbfcal{B}}

\newcommand{\hF}{\widehat{F}}
\newcommand{\hG}{\widehat{G}}

\newcommand{\hx}{\widehat{x}}
\newcommand{\hy}{\widehat{y}}

\newcommand{\sP}{{\mathsf P}}

\newcommand{\shF}{{\mathsf \hF}}
\newcommand{\shG}{{\mathsf \hG}}

\title[Universal Assembly and Cellular Loop Spaces on Regular CW Complexes]{Universal Assembly and Cellular Loop Spaces\\on Regular CW Complexes}
\author{S. Dylda, T. Macko}

\subjclass[2020]{Primary: 57R65, 57S25} 

\keywords{sheaf, cellular loop space} 

\address{Faculty of Mathematics, Physics, and Informatics, Comenius University, Mlynsk\'a dolina,
SK-824 48, Bratislava, Slovakia} \email{tibor.macko@fmph.uniba.sk} \email{dylda1@uniba.sk} 

\address{Institute of Mathematics, Slovak Academy of Sciences, \v Stef\'anikova 49, Bratislava, SK-81473, Slovakia} \email{macko@mat.savba.sk}

\thanks{This work was supported by grants VEGA 1/0425/25, UK 1162/2024, UK UK/304/2023. Parts of this work will be included in the PhD thesis of S.D} 

\date{\today}

\begin{document}

\begin{abstract}
We develop a regular CW analogue of the classical assembly formalism for chain complexes appearing in algebraic surgery theory. From the cell poset, we construct combinatorial path and loop objects using fences of comparable cells and prove that their classifying spaces recover the homotopy types of the ordinary based path and loop spaces. The resulting loop object carries a natural monoid structure, giving rise to a DG algebra defined directly from the cellular structure.

For complexes of cellular cosheaves, we introduce a universal assembly functor to modules over the group ring of the fundamental group and study the localization determined by global equivalences. The associated homotopy category is identified with a Verdier quotient of the derived category of cellular cosheaves, and its fibrant objects are precisely the homotopy locally constant complexes. A single elementary cosheaf becomes a compact generator after localization, and its derived endomorphism DG algebra is identified with singular chains on the cellular loop space. Consequently, the localized theory admits a Morita description in terms of DG modules over the loop DG algebra. The formalism provides a regular CW counterpart of the classical delta-set approach to assembly in algebraic surgery theory due to Ranicki and Weiss.
\end{abstract}

\maketitle

\tableofcontents

\newpage

\section{Introduction}

The topological applications of algebraic $L$-theory involve a local-to-global procedure that assembles algebraic objects carried over a space into algebra over its fundamental group. This passage is used to encode geometric problems, such as the comparison of normal maps, surgery obstructions, and structure sets, in terms of chain level algebra equipped with the so-called \textit{chain duality}. This program, together with many applications, is developed in detail in the book \cite{Ranicki92} by Ranicki. The local objects reflect the fact that the geometry is built piece by piece, while the assembled object records how these local pieces are glued in the presence of a nontrivial fundamental group.

A key ingredient of \cite{Ranicki92} is laid out in the paper \cite{RanickiWeiss} by Ranicki and Weiss, where a theory of chain complexes over delta-sets is first studied without the chain duality. One starts with a local system (alias ``(co)sheaf'') of chain complexes over a delta-set $X$, performs assembly, and obtains a chain complex over the group ring $R[\pi_1(X,x)]$. In this formalism, the assembly functor is factored through a suitable localized category. In \cite{Ranicki92} the quadratic $L$-groups of the localized category and the assembled category were shown to be isomorphic; this result is known as the \textit{algebraic $\pi$-$\pi$ theorem} \cite[\S 10]{Ranicki92}. For its proof, it is important that in \cite{RanickiWeiss} the localized category is identified through the endomorphisms of a distinguished generator. In particular, a delta-set model of the based loop space of $X$ enters as the object controlling these endomorphisms.

The purpose of the present paper is to carry out an analogous program for regular CW complexes. Regular CW complexes arise naturally in geometric topology, and one often wants to work with the given cell structure rather than introduce an auxiliary delta-set model. This becomes especially important in multiplicative questions. After the full theory is developed, as in \cite{Ranicki92}, we expect the product formulas for surgery-theoretic invariants, such as the total surgery obstruction, to acquire straightforward proofs in the regular CW setting because the cartesian product of regular CW complexes carries a simple canonical product cell structure, see also \cite{MackoSpiros}. This happens to be in contrast with the more elaborate delta-set models described in \cite[Appendix B]{Ranicki92}, which require diagonal approximations. A direct regular CW formalism therefore avoids auxiliary choices and is better adapted to product constructions. 

In a regular CW complex, the local geometric pieces are the cells themselves, together with their incidence relations, so the natural way to record algebraic data over the space is to assign objects to cells and structure maps along face relations. This leads directly to functors on the cell poset $\cX$, and hence to the notion of a cellular sheaf or cosheaf. The use of sheaf-theoretic language is justified by results of Curry and Shepard; see \cite[Theorem~4.2.10]{Curry14}: for the Alexandrov topology associated with the cell poset, functor categories on $\cX$ are equivalent to categories of cellular sheaves and cosheaves on the corresponding topological space. This makes it natural to ask whether the assembly picture can be formulated entirely in terms of the cell poset, without passing through an auxiliary delta-set presentation. 

One expects the localized category to be controlled by chains on a based loop object. In the delta-set setting, such loop and path objects are already available through Kan's combinatorial constructions, see \cite[\S 9]{Kan58}. But in the regular CW setting, one would like a combinatorial model defined directly from the incidence data of cells. The present paper tackles this question by introducing a \textit{path poset} and a \textit{loop poset} associated with the cell poset. These posets are built from suitably comparable zig-zags in $\cX$, which are called \textit{fences}. Their classifying spaces recover the homotopy type of the corresponding based path and loop spaces, and they provide an explicit DG-algebra model for the derived endomorphisms of the generator of the localized category, whose objects we call \textit{homotopy local systems}. In this way, the regular CW assembly formalism acquires the same local-to-global shape as in \cite{RanickiWeiss}, but with the loop-space object described directly in cellular terms. 
\begin{center}
    \textbf{Notation}
    \vspace{0.2cm}
\end{center}

Throughout the discussion, we will employ the following notation:
\begin{itemize}
    \item $R$ - a commutative unital ring.
    \item $\Mod_R$ - category of modules over $R$.
    \item $\Ch(R)$ - category of unbounded chain complexes of R-modules.
    \item $D(R)$ - derived category of unbounded chain complexes of R-modules.
    \item $\Pos$ - category of (small) posets and order-preserving maps.
    \item $(X, \cX)$ - regular CW complex $X$ with a cell poset $\cX$.
    \item $\cbB C$ - classifying space of a small category $C$.
\end{itemize}

We write $\CoSh(\cX, R)$ for the abelian category of cellular cosheaves of $R$-modules on the cell poset $\cX$, and $D(\cX)$ for the derived category of complexes of such cosheaves. The precise definition of a \textit{cellular cosheaf} will be given in the later chapters.

\begin{center}
    \textbf{Main Theorems}
    \vspace{0.2cm}
\end{center}

Historically, the appearance of loop-space algebras was motivated by algebraic $K$-theory. In Waldhausen's $K$-theory, a connected based space is studied via modules over the loop-space ring spectrum $\Sigma^{\infty}_{+}\Omega X$. The loop DG algebra appearing in $L$-theory should be viewed as the chain-level linear analogue of a structure that was first exploited on the $K$-theoretic side \cite{Waldhausen85}. Hence, we first identify the loop-space object that controls the localized theory:

\begin{mainthm}[Cellular loop-space model]\label{thm:main-loop-space}
Let $(X,\cX)$ be a connected locally finite regular CW complex and let $x$ be a $0$-cell. There are posets $\cE_x\cX$ and $\varOmega_x\cX$, defined from fences in $\cX$, together with an endpoint map $\ev_1\colon \cE_x\cX\to \cX$ such that $\cbB\cE_x\cX$ is contractible and
\[
    \cbB\varOmega_x\cX \longrightarrow \cbB\cE_x\cX \xrightarrow{\;\cbB\ev_1\;} \cbB\cX
\]
is a homotopy fiber sequence. In particular,
\[
    \cbB\varOmega_x\cX \simeq \Omega_x X.
\]
Moreover, $\varOmega_x\cX$ is a monoid in the category $\Pos$ of small posets and order-preserving maps, so $C_*^{\mathrm{sing}}(\cbB\varOmega_x\cX;R)$ carries a natural DG-algebra structure.
\end{mainthm}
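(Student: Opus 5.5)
The plan is to realize the based path fibration combinatorially and then apply Quillen's Theorem~B. I will take a \emph{fence} from $x$ to be a finite zig-zag of comparable cells $x=c_0 \le c_1 \ge c_2 \le \cdots \, c_n$ in $\cX$. Fences are taken modulo the obvious reductions: inserting or deleting repeated cells, and composing two consecutive relations in the same direction. I will define $\cE_x\cX$ as the set of such reduced fences, partially ordered by the relation generated by ``enlarging'' a cell in the zig-zag. Concretely, $(c_i)\le(c_i')$ if the fences have the same shape and $c_i\le c_i'$ at the peaks and $c_i\ge c_i'$ at the valleys, or if one is obtained from the other by inserting a degenerate segment. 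The endpoint map $\ev_1$ sends a fence to its last cell, and $\varOmega_x\cX=\ev_1^{-1}(x)$. The fences will be arranged so that the last cell is always a peak, which makes $\ev_1$ order-preserving.

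First, I will show that $\cbB\cE_x\cX$ is contractible. I will write down order-preserving maps $\id\ge T\le \mathrm{const}_x$, or a short chain of such comparisons, where $T$ truncates or retracts a fence toward its initial segment. Natural transformations between poset maps give homotopies of classifying spaces, so this yields a contraction. If a single retraction is not monotone, I will filter $\cE_x\cX$ by fence length and contract one layer at a time. Second, I will recall that for a regular CW complex $\cbB\cX$ is the barycentric subdivision of $X$, so $\cbB\cX\cong X$.

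The main step is Theorem~B applied to $\ev_1$. I need that for every relation $c\le d$ in $\cX$ the induced map of fibres $\ev_1/c \to \ev_1/d$ (or of the strict fibres $\ev_1^{-1}(c)\to\ev_1^{-1}(d)$, after showing these are homotopy equivalent to the comma posets) is a homotopy equivalence. I would construct an explicit homotopy inverse $\ev_1^{-1}(d)\to\ev_1^{-1}(c)$ by appending the step $d\ge c$ to a fence. I would then check that the two composites are connected to the identity by natural transformations. Here the zig-zag reductions are essential: appending $c\le d\ge c$ must be comparable, via a collapse, to doing nothing. I expect this step to be the main obstacle. It requires choosing the partial order on fences so that these collapses are honest order relations and $\ev_1$ remains monotone, and local finiteness will likely be needed to make the comma fibres well behaved.

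Given the hypothesis, Theorem~B says that $\cbB\ev_1^{-1}(x)\to\cbB\cE_x\cX\to\cbB\cX$ is a homotopy fibre sequence. Since the total space is contractible and $\cbB\cX\simeq X$ is connected, $\cbB\varOmega_x\cX\simeq\Omega_xX$. For the monoid structure, I will use concatenation of loops followed by reduction. The empty fence at $x$ is the unit, associativity holds on reduced fences, and concatenation is monotone in each variable, so it gives a monoid in $\Pos$. Finally, $\cbB$ commutes with finite products of posets (in compactly generated spaces), so $\cbB\varOmega_x\cX$ is a topological monoid. The Eilenberg--Zilber shuffle map then makes $C_*^{\mathrm{sing}}(\cbB\varOmega_x\cX;R)$ a DG algebra, and it is associative because the shuffle map is associative.
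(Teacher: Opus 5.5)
Your outline is the paper's: fence posets, contractibility of $\cbB\cE_x\cX$ via a length filtration, retraction of comma posets onto strict fibres by an endpoint move, comparison of fibres over comparable cells by appending a step and collapsing the back-and-forth, Quillen's Theorem~B, and concatenation with Eilenberg--Zilber. You use over- rather than under-categories, which is immaterial.

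The step you single out as the main obstacle is harmless in your conventions. Read your order as same-shape comparisons between representatives, degenerate ones included, with the larger fence having higher peaks and lower valleys, the last entry a peak, and $(a\le y\le b)=(a\le b)$. Then raising the last peak to $c$ retracts $\ev_1/c$ onto $\ev_1^{-1}(c)$, with $z\le s_c(z)$. For $c\le d$, the maps ``raise the last peak to $d$'' and ``append $d\ge c$'' have both composites above the identity: one raises a peak from $c$ to $d$, the other lowers a valley from $d$ to $c$. This is exactly the paper's comparison of $\cP_{x,a}\cX$ with $\cP_{x,b}\cX$, and no local finiteness enters.

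The genuine gap is the contractibility of $\cbB\cE_x\cX$. Without it, Theorem~B only identifies $\cbB\varOmega_x\cX$ with the homotopy fibre of $\cbB\ev_1$, not with $\Omega_xX$, and neither of your mechanisms works for your order.

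\begin{itemize}
\item A comparison $\id\ge T\le\mathrm{const}_x$ is impossible. Nothing lies strictly below $(x)$, so $T=\mathrm{const}_x$. Yet $(x\le y\ge c)\not\ge(x)$ for a vertex $c\ne x$, because $\ev_1$ is monotone.
\item Contracting a layer by truncation is not order-preserving either. Take $[0,2]$ with vertices $x,v,w$, an edge $e_1$ from $x$ to $v$ and an edge $e_2$ from $v$ to $w$. Then $(x\le e_1\ge v\le e_2)\le(x\le e_1\ge x\le e_1\ge v\le e_2)$, by raising the degenerate peak of $(x\le x\ge x\le e_1\ge v\le e_2)$. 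But deleting one or two final entries of the longer fence leaves a fence ending at $v$ or at $e_1$, and neither lies above a fence ending at $e_2$.
\end{itemize}

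The structural reason is the quantity $\phi(z)=\sum_{\text{peaks}}\dim c_i-\sum_{\text{valleys}}\dim c_i$. It is invariant under your identifications and strictly increases along every nontrivial comparison, which incidentally gives antisymmetry for free. Deleting a final descending step does not change $\phi$. So a fence ending in a descending step is never comparable with its one-step truncation. The paper's device therefore does not transfer: it retracts $\cE_x\cX^{(n+1)}$ onto $A_n=\{z\mid p_n(z)\le z\}$ and then onto $\cE_x\cX^{(n)}$, which needs $z$ comparable with $p_n(z)$.

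A new argument is needed, and the paper's own version of this step has the same defect. In its order, in the interval $v_0<e>v_1$, one has $(v_0,e,v_0,e,v_1)\le(v_0,e,v_1)$ by lowering the degenerate valley of $(v_0,e,e,e,v_1)$. But $p_3$ sends these to $(v_0,e,v_0,e)$ and $(v_0,e,v_1)$, and these cannot satisfy $\le$ since $e\not\le v_1$.
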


The main motivating point of considering the localized theory is the question of whether it is possible to suitably ``disassemble'' or ``fragment over $X$'' an $R[\pi]$-chain complex. In the delta-set setting, this was studied in \cite{RanickiWeiss}. It turns out that for the desired algebraic $\pi$-$\pi$-theorem, only the properties of certain specific complexes obtained in this way inside the localized category are needed; see \cite[\S 10, page 110]{Ranicki92}. The localization itself was described in \cite{RanickiWeiss} by explicit constructions via categories of fractions.

Our next result gives the regular CW analogue of the localization. To describe homotopy-theoretic properties of cellular (co)sheaves we use the language of model categories and derived categories. We assume slight familiarity with the standard notions and constructions of the theory. For background and terminology, we refer the reader to the monographs of Hovey and Hirschhorn \cite{Hovey99,Hirschhorn02}. Other necessary definitions are presented in Sections 2 and 3.

\begin{mainthm}[Homotopy-local-system localization]\label{thm:main-localization}
Let $(X,\cX)$ be a connected finite regular CW complex and let the abelian category of chain complexes of cellular cosheaves
\[
    \cM:=\Ch(\CoSh(\cX, R))
\]
be equipped with the projective model structure. Then the left Bousfield localization $L_S\cM$ of $\cM$ at global equivalences exists, and its fibrant objects are precisely the homotopy locally constant complexes of cellular cosheaves. If $D_\gc(\cX)\subset D(\cX)$ denotes the full triangulated subcategory of globally contractible objects, then there is an equivalence of triangulated categories 
\[
    D_\hlc(\cX) \cong D(\cX)/D_\gc(\cX).
\]
where $D_\hlc(\cX) := \Ho(L_S\cM)$ is the homotopy category and $D(\cX)/D_\gc(\cX)$ is the Verdier quotient of triangulated categories. Moreover, for any $0$-cell $x$, the image of the elementary cosheaf $\{\hx\}_R$ is a compact generator of $D_\hlc(\cX)$.
\end{mainthm}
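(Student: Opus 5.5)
We need to guess the definitions. A global equivalence is presumably a map $f\colon F\to G$ of complexes of cosheaves whose assembly (equivalently, homotopy colimit over $\cX$ twisted by the universal cover, i.e. $\hocolim$ of $F$ pulled back along $\tilde{\cX}\to\cX$, or derived tensor with the cellular chains of the universal cover) is a quasi-isomorphism of $R[\pi]$-complexes. A complex is homotopy locally constant if every face relation $\sigma\le\tau$ induces a quasi-isomorphism $F(\tau)\to F(\sigma)$ (for cosheaves the extension maps go from larger to smaller cells). The elementary cosheaf $\{\hx\}_R$ is the representable $R[\Hom_\cX(-,x)]$-type functor, i.e. $R$ on the star of $x$ in the appropriate variance, projective in $\CoSh(\cX,R)$.

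The plan is to realize the localization as a left Bousfield localization at an explicit \emph{set} of maps between cofibrant objects, then identify the local objects. Since $\cX$ is a finite poset, $\cM$ is a combinatorial, left proper, cellular, stable model category: its projective structure is cofibrantly generated by shifts of the representables. I will take $S$ to be the set of shifts of the maps $\{\hat\tau\}_R\to\{\hat\sigma\}_R$ induced by face relations $\sigma\le\tau$ (one per pair and degree). Each such map is a global equivalence, because both representables assemble to $R[\pi]$ up to quasi-isomorphism: their universal-cover pullbacks are free on contractible stars. Hirschhorn's existence theorem (Theorem 4.1.1 of \cite{Hirschhorn02}) then gives $L_S\cM$. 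An object $F$ is $S$-local iff, for all $n$, $\mathrm{Map}(\{\hat\sigma\}_R[n],F)\to\mathrm{Map}(\{\hat\tau\}_R[n],F)$ is a weak equivalence. By Yoneda these mapping complexes are $F(\sigma)$ and $F(\tau)$, so locality is exactly homotopy local constancy. (All objects are fibrant projectively.)

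It remains to show that the $S$-local equivalences are precisely the global equivalences. Here is the argument. A homotopy locally constant complex on a connected finite poset is determined up to equivalence by its fiber at $x$ as a module over $C_*(\Omega_xX)$. On the homotopy category of local objects, assembly is therefore corepresented: $\mathbb R\Hom(F,G)$ for local $G$ is computed by a functor of the assembled $R[\pi]$-complex (or more precisely the $C_*\Omega$-module). So $f$ is an $S$-local equivalence iff the induced map on homotopy locally constant replacements is an equivalence. That holds iff the stalk at $x$ of the replacement is a quasi-isomorphism, and that stalk is exactly the assembly (a derived $\hocolim$ over the universal cover). This identification, that the local replacement's fiber equals the assembly, i.e. a ``group-completion''/Quillen Theorem B-type statement for the cosheaf homotopy colimit over $\cX$, is where I expect the main difficulty. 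I would prove it using Theorem~\ref{thm:main-loop-space}: form the replacement $LF(\sigma)=\hocolim$ of $F$ over the path poset fiber $\ev_1^{-1}$ over $\sigma$ and use contractibility of $\cbB\cE_x\cX$.

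Given this, the Verdier statement is formal. Because $L_S\cM$ is stable, $\Ho(L_S\cM)$ is the Verdier quotient of $D(\cX)=\Ho(\cM)$ by the localizing subcategory of $S$-acyclic objects. Those are the objects whose map to $0$ is a global equivalence, namely $D_\gc(\cX)$. For compact generation, $\{\hx\}_R$ is compact in $D(\cX)$ since it is finitely presented projective. It stays compact in the quotient because the localizing subcategory is generated by compact objects (the cones of $S$), by Neeman's theorem. For generation, suppose $\Hom(\{\hx\}_R[n],G)=H_n(G(x))=0$ for local $G$. Local constancy and connectedness force $G(\sigma)\simeq0$ for all $\sigma$, so $G\simeq0$.
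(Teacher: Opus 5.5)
\textbf{There is a genuine gap, at the step you flagged as the main difficulty.} Your choice of $S$ is the shifted comparison maps between elementary cosheaves. With the paper's conventions, $\{\hx\}_R$ is supported on $\cl(x)$ and these maps are $\varphi_{\sigma\tau}\colon\{\hat\sigma\}_R\to\{\hat\tau\}_R$ for $\sigma\le\tau$. This choice makes ``fibrant $=$ homotopy locally constant'' a Yoneda tautology. The paper instead localizes at $0\to K$ for generators $K$ of $D_\gc(\cX)$, so ``weak equivalences $=$ global equivalences'' is the tautology and the work goes into the fibrant objects. Either way the real content is the same: a homotopy locally constant complex with vanishing $\bbL\Asm$ must be zero (the paper's Lemma~\ref{lem:hlc-global-zero}). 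Equivalently, the localizing subcategory generated by the cones of the $\varphi_{\sigma\tau}$ must be all of $D_\gc(\cX)$.

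Your argument fails because the stalk at $x$ of the local replacement is \emph{not} the assembly. For $C=\{\hx\}_R$ the replacement is $E_x$, with $E_x(x)\simeq C_*^{\mathrm{sing}}(\Omega_xX;R)=:A_x$ (Proposition~\ref{prop:Ex-stalks}). On the other hand $\bbL\Asm(\{\hx\}_R)\cong R[\pi]$ (Proposition~\ref{prop:assembly-elementary}). These differ whenever $\Omega\widetilde X$ has nonzero reduced $R$-homology. Your parenthetical ``more precisely the $C_*\Omega$-module'' is exactly the issue. Maps into local objects see the $A_x$-module $(LC)(x)$, whereas assembly only sees its base change $\bbL\Asm(C)\simeq R[\pi]\otimes^{\bbL}_{A_x}(LC)(x)$ along the augmentation. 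So the implication ``global equivalence $\Rightarrow$ $S$-local equivalence'' requires this base change to be conservative. Contractibility of $\cbB\cE_x\cX$ only computes $(LC)(x)$ and gives nothing of the sort.

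That conservativity does hold for bounded-below modules. $A_x$ is connective with $H_0(A_x)\cong R[\pi]$, so the lowest homology of $M$ survives in $R[\pi]\otimes^{\bbL}_{A_x}M$. But $\cM$ consists of unbounded complexes, and there it fails. Take $X=S^2$, so $\pi=1$ and $A_x$ is quasi-isomorphic to the tensor algebra $R\langle u\rangle$ with $|u|=1$. The module $M=R[u^{\pm1}]$ has $R\otimes^{\bbL}_{A_x}M\simeq\cofib(u\colon\Sigma M\to M)\simeq0$. The corresponding homotopy locally constant complex is nonzero and has vanishing assembly. Concretely, it is the local system on $S^2=D^2\cup_{S^1}D^2$ with fiber $V=\bigoplus_{n\in\bbZ}\Sigma^nR$ whose clutching class in $\Hom_{D(R)}(\Sigma V,V)$ is the shift isomorphism; its homotopy colimit is the cofiber of that class.

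Consequently, what your $S$ actually gives is this:
\begin{itemize}
\item $L_S\cM$ exists.
\item Its fibrant objects are exactly the homotopy locally constant complexes.
\item $\{\hx\}_R$ is a compact generator; your compactness and generation arguments are fine.
\item Its acyclic objects form the localizing subcategory generated by the $\cofib(\varphi_{\sigma\tau})$, which can be strictly smaller than $D_\gc(\cX)$.
\end{itemize}
So neither ``weak equivalences $=$ global equivalences'' nor the Verdier identification follows without a bounded-below restriction. Under such a restriction, the Nakayama argument above repairs your step.

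The paper's proof does not avoid this problem. Its proof of Lemma~\ref{lem:hlc-global-zero} passes from ``$q^*C$ inverts all morphisms in $D(R)$'' to ``$q^*C$ is constant'', which discards higher coherences. The same $S^2$ example shows that lemma, as stated for unbounded complexes, also needs a bounded-below hypothesis.
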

\begin{rem}
    Unlike \cite{RanickiWeiss}, where the \textit{local systems} name is used for arbitrary cellular cosheaves, we use the name \textit{(homotopy) local systems} or \textit{(homotopy) locally constant cosheaves} to denote those cosheaves whose structure maps are (homotopy) equivalences, which is closer to the standard conventions from the sheaf theory.
\end{rem}
Theorem~\ref{thm:main-localization} identifies the localized category as the quotient. Factorization of the assembly functor through this quotient yields a canonical functor from $D_\hlc(\cX)$ to $D(R[\pi])$. The next theorem makes the description of this functor easier by allowing us to present the localized category as the category of modules over the loop DG-algebra and prescribing the functor by its action on the generators. It is an analogue of \cite[Theorem 4.2]{RanickiWeiss}:

\begin{mainthm}[DG-algebra model for homotopy local systems]\label{thm:main-dg-model}
Let $(X,\cX)$ be a connected finite regular CW complex, let $x$ be a $0$-cell and $\pi = \pi_1(X,x)$ be the fundamental group. There is a canonical homotopy local system $E_x$ such that $E_x$ is the fibrant replacement of $\{\hx\}_R$ and
\[
    \uHom_{D_\hlc(\cX)}(\{\hx\}_R,\{\hx\}_R) \cong \uHom_{D(\cX)}(\{\hx\}_R, E_x) \cong  C_*^{\mathrm{sing}}(\cbB\varOmega_x\cX\op;R)
\]
in $D(R)$. Consequently the model category of homotopy local systems is Quillen equivalent to the model category of DG-modules over the loop DG algebra
\[
    C_*^{\mathrm{sing}}(\cbB\varOmega_x\cX\op;R).
\]
This DG algebra admits an augmentation
\[
    C_*^{\mathrm{sing}}(\cbB\varOmega_x\cX\op;R) \longrightarrow R[\pi]
\]
which induces an isomorphism on $H_0$.
\end{mainthm}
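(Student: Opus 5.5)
We take Theorem~\ref{thm:main-loop-space} and Theorem~\ref{thm:main-localization} as given. The argument then splits into four parts: construct $E_x$, compute $\uHom(\{\hx\}_R,E_x)$, apply Morita theory, and identify $H_0$.

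\textbf{Construction of $E_x$.} We set $E_x(\sigma)$ to be the cellular chains on the fibre of $\ev_1$ over $\sigma$. Concretely, $E_x(\sigma):=C_*^{\mathrm{sing}}(\cbB(\ev_1\downarrow\sigma);R)$ or its dual comma category, whichever variance matches cosheaves. Corestriction maps come from functoriality of the comma category in $\sigma$. This is the cellular analogue of the path fibration $\ev_1\colon \cE_x\cX\to\cX$ of Theorem~\ref{thm:main-loop-space}.

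\textbf{$E_x$ is a fibrant replacement of $\{\hx\}_R$.} We need three facts.
\begin{enumerate}[label=(\roman*)]
\item $E_x$ is homotopy locally constant. By Quillen's Theorem~B, the homotopy fibre sequence of Theorem~\ref{thm:main-loop-space} shows that every face inclusion $\tau\le\sigma$ induces a weak equivalence of comma categories. Each of these is therefore homotopy equivalent to $\cbB\varOmega_x\cX$ (or to its opposite). Hence all structure maps are quasi-isomorphisms, and by Theorem~\ref{thm:main-localization} $E_x$ is $S$-local, i.e.\ fibrant.
\item There is a natural map $\{\hx\}_R\to E_x$. It is induced by the constant fence at $x$, viewed as a point of the comma category over $x$; the elementary cosheaf is free on $\hx$, so this point determines the map.
\item This map is a global equivalence. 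After applying global sections (the colimit over $\cX$), $E_x$ becomes chains on $\cbB\cE_x\cX$, which is contractible by Theorem~\ref{thm:main-loop-space}. The elementary cosheaf also has contractible global sections $R$. So the map is a global equivalence, i.e.\ an $S$-equivalence.
\end{enumerate}

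\textbf{The Hom computation.} By (i)--(iii), $E_x$ is a fibrant replacement of $\{\hx\}_R$. Since $\{\hx\}_R$ is cofibrant (it is projective, represented by $\hx$), the first isomorphism
\[
\uHom_{D_\hlc(\cX)}(\{\hx\}_R,\{\hx\}_R)\cong\uHom_{D(\cX)}(\{\hx\}_R,E_x)
\]
is the standard formula for mapping complexes in a left Bousfield localization. By Yoneda for the projective cosheaf $\{\hx\}_R$, the latter equals $E_x(\hx)$. This is chains on the fibre over $x$, which is the opposite-loop category, giving $C_*^{\mathrm{sing}}(\cbB\varOmega_x\cX\op;R)$.

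\textbf{Main obstacle: multiplicativity.} The Quillen equivalence needs the algebra structure, not just the chain complex. Here I would use the monoid structure of $\varOmega_x\cX$ from Theorem~\ref{thm:main-loop-space}. Concatenation of fences gives a right action $\varOmega_x\cX\times(\ev_1\downarrow\sigma)\to(\ev_1\downarrow\sigma)$, which makes $E_x$ a cosheaf of right DG-modules over $A:=C_*^{\mathrm{sing}}(\cbB\varOmega_x\cX\op;R)$, using Eilenberg--Zilber. One then shows that the induced map $A\to\uHom(\{\hx\}_R,E_x)$ is a DG-algebra quasi-isomorphism. It is the action on the unit fence, which the Hom computation above identifies as a quasi-isomorphism. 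If strictness of the action fails because Eilenberg--Zilber is only lax, I would replace $A$ by the strict endomorphism DG algebra of $E_x$ (or of a cofibrant-fibrant replacement) and connect the two by a zigzag of algebra quasi-isomorphisms.

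\textbf{Quillen equivalence.} We use the Quillen pair
\[
-\otimes_A E_x\colon \Mod_A\rightleftarrows L_S\cM\colon \uHom(\{\hx\}_R,-).
\]
By Theorem~\ref{thm:main-localization}, $\{\hx\}_R$ is a compact generator of $D_\hlc(\cX)$, so the Schwede--Shipley/Keller Morita theorem shows this pair is a Quillen equivalence.

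\textbf{Augmentation.} The map to $R[\pi]$ is $C_*^{\mathrm{sing}}(\cbB\varOmega)\to H_0=R[\pi_0\cbB\varOmega_x\cX]$. By Theorem~\ref{thm:main-loop-space}, $\pi_0\cbB\varOmega_x\cX=\pi_0\Omega_xX=\pi$, and concatenation is compatible with loop composition. The opposite category does not change $\pi_0$, though it may invert the multiplication, which can be fixed via $g\mapsto g^{-1}$ if needed. Since the map is the truncation to $H_0$ of a connective DG algebra, it is an isomorphism on $H_0$ by construction.
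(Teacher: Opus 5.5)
\textbf{The gap is in step (iii).} A global equivalence is defined through $\bbL\Asm(C)=\hocolim_{\widetilde{\cX}\op}q^*C\in D(R[\pi])$. That is the homotopy colimit over the universal cover, with its deck action. You instead compute $\hocolim_{\cX\op}$, which is only the homotopy coinvariants $R\otimes^{\bbL}_{R[\pi]}\bbL\Asm(C)$, and these do not detect isomorphisms once $\pi\neq 1$.

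For example, take $X=S^1$, $R=\mathbb{Q}$, and the rank-one local system $\mathcal{L}$ with monodromy $2$. Then $\hocolim_{\cX\op}\mathcal{L}$ is the acyclic complex $\mathbb{Q}\xrightarrow{2-1}\mathbb{Q}$. Yet $\mathcal{L}$ is homotopy locally constant and nonzero, so $\bbL\Asm(\mathcal{L})\neq 0$ by Lemma~\ref{lem:hlc-global-zero}. Thus $0\to\mathcal{L}$ passes your test without being a global equivalence. A symptom of the same conflation: your computation returns $R$ for $\{\hx\}_R$, whereas Proposition~\ref{prop:assembly-elementary} gives $\bbL\Asm(\{\hx\}_R)\cong R[\pi]$.

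This matters because the first isomorphism in the statement rests entirely on $\eta_x\colon\{\hx\}_R\to E_x$ being an $S$-equivalence. The paper avoids assembly here and tests against local objects instead. Write $p=\ev_1$ and let $L$ be homotopy locally constant. Then $\uHom(\{\hx\}_R,L)\cong L(x)$. Adjunction gives $\uHom(E_x,L)\cong\uHom_{D(\cE_x\cX)}(\underline{R},p^*L)$, and this is $L(x)$ because $p^*L$ is homotopy locally constant on a poset with contractible classifying space (Proposition~\ref{prop:Ex-contractible}). Under these identifications $\eta_x^*$ is the identity.

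If you want to keep the assembly route, you must work on $\widetilde{\cX}$. Pulling $\ev_1$ back along $\widetilde{\cX}\to\cX$ gives a $\pi$-sheeted covering of $\cE_x\cX$. It is trivial since $\cbB\cE_x\cX$ is contractible, so $\bbL\Asm(E_x)\simeq R[\pi]$ and $\bbL\Asm(\eta_x)$ is an isomorphism $R[\pi]\to R[\pi]$.

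\textbf{Two smaller points.} In (i) you use Quillen's Theorem~B backwards. Its \emph{hypothesis} is that each $\tau\le\sigma$ induces an equivalence $(\sigma\downarrow\ev_1)\to(\tau\downarrow\ev_1)$. Theorem~A concerns only the fibre over $x$ and does not return this. What you need are the direct combinatorial equivalences that the paper feeds into Theorem~B: $(y\downarrow\ev_1)\simeq\cP_{x,y}\cX$ by appending $y$ (Proposition~\ref{prop:comma-retract}), and $\cP_{x,a}\cX\simeq\cP_{x,b}\cX$ for comparable $a,b$ by endpoint extension (Proposition~\ref{prop:path-posets-comparable}). This is how the paper proves Proposition~\ref{prop:Ex-is-hlc}.

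In the Morita step, your displayed pair is not an adjunction. The right adjoint of $-\otimes_A E_x$ is $\Hom(E_x,-)$, not evaluation at $x$. Moreover, $-\otimes_A E_x$ is left Quillen only if $E_x$, with its $A$-action, is cofibrant, which the comma-category model need not be. You are right that a DG-algebra identification, not merely one in $D(R)$, is needed. The paper passes over this: Proposition~\ref{prop:derived-endomorphisms} gives an isomorphism only in $D(R)$, and then Schwede--Shipley is invoked.

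Your sketch can be completed as follows.
\begin{enumerate}
\item Concatenation makes $E_x$ an $A$-module object in $\cM$; replace it cofibrantly there by $QE_x$.
\item This gives a DG-algebra map $A\to\mathrm{End}(QE_x)$.
\item Its composite with $\mathrm{End}(QE_x)\simeq\uHom(\{\hx\}_R,E_x)=E_x(x)$ is the orbit map of the constant fence, i.e.\ the inclusion $\varOmega_x\cX\hookrightarrow(x\downarrow\ev_1)$.
\item That inclusion is a quasi-isomorphism on chains by Proposition~\ref{prop:comma-retract}, so $A\to\mathrm{End}(QE_x)$ is a DG-algebra quasi-isomorphism.
\end{enumerate}

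The augmentation argument coincides with the paper's.
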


\begin{rem}
    In the notation of \cite[\S 10]{Ranicki92} the system $E_x$ is an analogue of the \textit{homogeneous envelope} $V^\infty \Gamma$. 
\end{rem}

Taken together, these results show that the part of the homotopy theory of cellular cosheaves detected by localization is governed by a single generator whose endomorphisms are computed by chains on a cellular model of the based loop space. The augmentation to $R[\pi]$ recovers the usual passage from loop-space algebra to the group ring, and hence places the construction in the expected surgery-theoretic framework.\\

This paper develops a part of the technical framework for algebraic surgery over regular CW complexes. In the followup paper, we develop the corresponding chain duality, which allows us to construct $L$-theory in this setting. The results of the present paper are then used to prove the analogue of the algebraic $\pi$-$\pi$ theorem. In this way, we obtain the complete algebraic surgery exact sequence for finite regular CW complexes, see \cite[\S 14]{Ranicki92}, as well as a significant simplification of definitions and proofs of multiplicative formulas for surgery obstruction invariants, such as \cite[Proposition 21.1]{Ranicki92}. On the $L$-homology term of the sequence such formulas were already obtained in \cite{MackoSpiros}. \\

It should be mentioned, that there exists a treatment of the present theory for finite polyhedra in the setup of stable $\infty$-categories, presented mostly in \cite{LurieAlgSurg} and partially in \cite{9Authors}. The present paper attempts to achieve the middle ground between Ranicki's explicit approach and such abstract $\infty$-categorical treatment, where we both generalize to regular CW-complexes and utilize model-categorical tools to enjoy the advantages of abstract $1$-categorical reasoning, while retaining connections and freedom to use explicit formulas. \\

\noindent Here we will outline the structure of our paper:
\begin{itemize}
    \item Section~\ref{sec:cellular-loop-spaces} constructs the fence-based path $\cE_x\cX$ and loop posets $\varOmega_x\cX$ and compares their classifying spaces with the ordinary based path and loop spaces.
    \item Section~\ref{sec:cellular-cosheaves-and-assembly} introduces the basic formalism of cellular cosheaves on regular CW complexes, including elementary objects, the projective model structure, derived pushforwards. Universal assembly functor is also introduced in this section as well as the short recapitulation on local systems and their relation to algebra over the group ring $R[\pi]$.
    \item Section~\ref{sec:homotopy-local-systems} studies the localization determined by global equivalences, identifies its homotopy category $D_\hlc(\cX)$ with a Verdier quotient $D(\cX) / D_\gc(\cX))$, and proves that the elementary cosheaf at a base vertex becomes a compact generator.
    \item Section~\ref{sec:loop-dg-algebra} computes the derived endomorphism DG algebra of the generator, constructs the augmentation to the group ring, and deduces the Morita description of homotopy local systems.
\end{itemize}

Theorems A, B and C will be proved in the Sections \ref{sec:cellular-loop-spaces}, \ref{sec:homotopy-local-systems} and \ref{sec:loop-dg-algebra} respectively.

\section{Cellular loop spaces}\label{sec:cellular-loop-spaces}

The geometric input needed for Section~\ref{sec:loop-dg-algebra} is a poset-level model for the based path fibration. In ordinary topology, the endomorphisms of a based fiber are controlled by the based loop space; here the analogous role will be played by the generator coming from Section~\ref{sec:homotopy-local-systems}. For this section we only need the standing connected regular CW hypotheses together with local finiteness, and we denote the chosen base $0$-cell simply by $x$. The aim is to construct from the poset alone a combinatorial analogue of the based path fibration
\[
    \Omega_x X \longrightarrow \sP_x X \xrightarrow{\;\ev_1\;} X.
\]
The basic objects are zig-zags of comparable cells. They should be thought of as \textit{cellular} paths in a CW complex: one is allowed to move from a cell to a face or coface, and the order relation on such zig-zags is generated by elementary homotopies that refine, shift, or extend the zig-zag.

\subsection{Preliminaries}

Throughout, a regular CW complex means a CW complex in which each attaching map is a homeomorphism onto its image. If $(X,\cX)$ is a regular CW complex, we write $\cX$ for its cell poset, ordered by
\[
    x\le y \qquad \Longleftrightarrow \qquad x\subseteq \overline{y}.
\]
For a cell $x\in \cX$ we write
\[
    \st(x):=\{\,y\in \cX\mid x\le y\,\},
    \qquad
    \cl(x):=\{\,y\in \cX\mid y\le x\,\}.
\]
We use the classifying-space notation $\cbB\cA:=|\cN(\cA)|$ for any small category $\cA$.

The combinatorics of a regular CW complex are encoded faithfully by its cell poset. In particular, $\cbB\cX$ identifies with the barycentric subdivision of $X$, so there is a canonical homeomorphism
\[
    \cbB\cX \cong X.
\]
This is the main reason that functor categories on $\cX$ can be used as a genuinely cellular model for sheaf-theoretic constructions on $X$.

\subsection{Fences and path posets}

\begin{defn}
A \textbf{based fence at $x$} is a finite sequence
\[
    z=(x=x_0,x_1,\dots,x_n)
\]
of cells of $\cX$ such that each consecutive pair $x_i,x_{i+1}$ is comparable. When convenient we display the relevant order symbols between consecutive entries, for example
\[
    z=(x=x_0 \le x_1 \ge x_2 \le \cdots \ge x_n).
\]
The integer $n$ is called the \textbf{length} of the fence.
\end{defn}

Two consecutive equal entries carry no geometric information, so they are removed.

\begin{defn}
Let $\widetilde{\cE}_x$ be the set of based fences at $x$. Define an equivalence relation on $\widetilde{\cE}_x$ generated by deleting a repeated adjacent entry,
\[
    (x_0,\dots,x_i,x_i,\dots,x_n) \sim (x_0,\dots,x_i,\dots,x_n).
\]
A fence is \textbf{reduced} if no two consecutive entries coincide.
\end{defn}

We next define the order relation. Intuitively, a fence becomes \emph{smaller} when an ascending step is refined or an endpoint is pushed downward, and becomes \emph{larger} under the dual descending operations. Peaks may move upward and valleys may move downward.

\begin{defn}\label{defn:fence-order}
On $\widetilde{\cE}_x$ define a relation generated by the following elementary moves.
\begin{enumerate}[label=(\arabic*)]
    \item \textbf{Internal refinements.}
    \[
        (\cdots a \le b \cdots) \ge (\cdots a \le y \le b \cdots)
        \qquad\text{whenever } a\le y\le b,
    \]
    and dually
    \[
        (\cdots a \ge b \cdots) \le (\cdots a \ge y \ge b \cdots)
        \qquad\text{whenever } a\ge y\ge b.
    \]

    \item \textbf{Internal shifts.}
    \[
        (\cdots a \le b \ge c \cdots) \le (\cdots a \le y \ge c \cdots)
        \qquad\text{whenever } b\le y,
    \]
    and dually
    \[
        (\cdots a \ge b \le c \cdots) \ge (\cdots a \ge y \le c \cdots)
        \qquad\text{whenever } b\ge y.
    \]

    \item \textbf{Endpoint extension.}
    \[
        (\cdots a) \le (\cdots a \le y)
        \qquad\text{whenever } a\le y,
    \]
    and dually
    \[
        (\cdots a) \ge (\cdots a \ge y)
        \qquad\text{whenever } a\ge y.
    \]
\end{enumerate}
The \textbf{cellular path poset} $\cE_x\cX$ is the quotient of $\widetilde{\cE}_x$ equipped with the induced relation, given by the reflexive and transitive closure of the elementary relations above.
\end{defn}

The endpoint of a based fence depends only on its equivalence class, so there is a well-defined endpoint map
\[
    \ev_1\colon \cE_x\cX \longrightarrow \cX,
    \qquad
    \ev_1(x_0,\dots,x_n)=x_n.
\]
For each $y\in \cX$ define the fiber poset
\[
    \cP_{x,y}\cX:=\ev_1^{-1}(y)
    \subseteq \cE_x\cX,
    \qquad
    \varOmega_x\cX:=\cP_{x,x}.
\]
Thus $\cP_{x,y}\cX$ is the poset of cellular paths from $x$ to $y$, and $\varOmega_x\cX$ is the poset of cellular loops based at $x$.

\begin{prop}\label{prop:fence-order-poset}
The induced relation on $\cE_x\cX$ is a partial order.
\end{prop}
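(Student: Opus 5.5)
The plan is to reduce the statement to antisymmetry and prove that with an integer-valued potential. Reflexivity and transitivity hold by construction, since the relation on $\cE_x\cX$ is defined as a reflexive and transitive closure. The content is therefore antisymmetry: if $z\le w$ and $w\le z$ in $\cE_x\cX$, then $z=w$. Equivalently, any closed chain of elementary moves (each applied in its ``$\le$'' direction, interspersed with insertions or deletions of repeated entries) starting and ending at the same class consists only of trivial moves. I would find a function $\Phi$ on reduced fences that is invariant under $\sim$ and strictly increases along every nontrivial elementary move in the $\le$ direction. A cycle would then force $\Phi(z)<\Phi(z)$ unless every move in it is trivial.

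The one geometric input is that in a regular CW complex a strict relation $a<b$ forces $\dim a<\dim b$. For a reduced fence $z=(x_0,\dots,x_n)$ I would combine three terms. The first is a step term
\[
W(z)=\sum_{i=0}^{n-1}s_i\,(\dim x_i+\dim x_{i+1}),
\]
with $s_i=+1$ for a descending step $x_i>x_{i+1}$ and $s_i=-1$ for an ascending step. The second is $W_2(z)=\sum_{i\ge1}\dim x_i$. The third is $W_3(z)=\dim x_n$. Set $\Phi=W+\varepsilon W_2+cW_3$ with $0<\varepsilon<2$ and $c$ large.

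The bookkeeping goes move by move, always working with reduced representatives, so that $\Phi$ is automatically compatible with $\sim$; a move producing a repeated entry is read after reduction.
\begin{itemize}
\item Inserting $y$ into an ascending step $a<y<b$ changes $W$ by $-2\dim y$ and $W_2$ by $+\dim y$, so $\Phi$ drops, as required for the smaller fence. The descending case is dual, with $\Phi$ rising by $(2+\varepsilon)\dim y>0$.
\item Raising a peak $b<y$ leaves $W$ unchanged, because the peak's contributions cancel to $\dim c-\dim a$, and raises $W_2$. Lowering a valley lowers $W_2$.
\item Neither refinements nor shifts change the endpoint, so $W_3$ is unaffected by them.
\item Endpoint extension $a<y$ changes $\Phi$ by $-(\dim a+\dim y)+\varepsilon\dim y+c(\dim y-\dim a)$. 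This is positive once $c>2d$, where $d$ bounds the dimensions involved. The downward extension is dual.
\end{itemize}

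The main obstacle is that $X$ is only locally finite and may be infinite dimensional, so no global $c$ need exist. I would handle this by fixing the cycle first. A witnessing cycle of moves is a finite sequence of finite fences, so only finitely many cells occur in it, and I can take $d$ to be the maximum of their dimensions and choose $c$ for that cycle.

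The remaining care is in the degenerate cases. One is a shift or refinement whose new entry coincides with a neighbour, which after reduction is really a deletion. The other is the interaction of moves with positions $0$ and $n$: the base $x_0=x$ is fixed, and $W_2$ starts at $i=1$. I would check these by hand, verifying that each nontrivial move still changes $\Phi$ strictly in the correct direction, or else collapses to another elementary move already covered. If some such edge case fails for this particular $\Phi$, the fallback is a lexicographic potential: compare first the endpoint dimension, then the number of descending minus ascending steps, then $W_2$.
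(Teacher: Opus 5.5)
\textbf{The gap.} It sits in your choice to work ``always with reduced representatives''. The order on $\cE_x\cX$ is induced from elementary moves between \emph{arbitrary} representatives in $\widetilde{\cE}_x$. A shift whose pattern $a\le b\ge c$ (or $a\ge b\le c$) is itself degenerate produces relations between reduced fences that are not moves between reduced fences. Two examples:
\begin{enumerate}[label=(\roman*)]
\item Lowering the middle entry of a run $b,b,b$ to some $a<b$ gives the valley spike $(\cdots b\ge a\le b\cdots)\le(\cdots b\cdots)$.
\item Lowering the second entry of $(\cdots a,a,q\cdots)$, with $a<q$, to some $y<a$ gives $(\cdots a\ge y\le q\cdots)\le(\cdots a\le q\cdots)$.
\end{enumerate}
The paper itself relies on (i) at the endpoint, in the proof of Proposition~\ref{prop:path-posets-comparable}. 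In both cases the \emph{smaller} fence gets the larger value of your potentials. For (i), $W$, $W_3$ and the count of descending minus ascending steps are unchanged, while $W_2$ grows by $\dim a+\dim b$. For (ii), $\Phi$ grows by $2\dim a+\varepsilon\dim y$ and the descending-minus-ascending count grows by one. So neither $\Phi$ nor your lexicographic fallback survives the degenerate-case check you deferred.

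\textbf{No potential can work.} With the moves as literally defined, antisymmetry is false. Take the interval $v_0<e>v_1$ with base $v_0$, and let $z=(v_0,e,v_0,e)$ and $z'=(v_0,e,v_0,e,v_0,e)$.
\begin{itemize}
\item Raising the middle entry of the run in $(v_0,e,v_0,v_0,v_0,e)\sim z$ to $e$ is an internal peak shift, so $z\le z'$.
\item Lowering the middle entry of the run in $(v_0,e,e,e,v_0,e)\sim z$ to $v_0$ is an internal valley shift, so $z'\le z$.
\item Yet $z\neq z'$, since reduced representatives are unique.
\end{itemize}
The paper's proof overlooks the same moves, so there is nothing there to fall back on. Its minimal-length argument is not sound even for reduced moves: ascending and descending refinements change length in opposite directions relative to the order, and a shift need not be undone by its own inverse.

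\textbf{What your argument does prove.} It proves the corrected statement in which moves are only applied to non-degenerate patterns. There, every nontrivial refinement or shift changes $W+\varepsilon W_2$ strictly in the right direction without moving the endpoint. Every extension strictly raises $\dim x_n$ along $\le$. Antisymmetry follows, and comparing $\dim x_n$ first even removes the need to choose $c$ per cycle. Be aware, though, that in this corrected order $(\cdots b\ge a\le b)\not\le(\cdots b)$, by your own $\Phi$. So Proposition~\ref{prop:path-posets-comparable} would need a new proof. The alternative repair is to replace the preorder by its poset reflection.
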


\begin{proof}
Reflexivity and transitivity are built into the definition, so only antisymmetry requires proof. Let $z,z'\in \cE_x\cX$ satisfy $z\le z'$ and $z'\le z$. Choose reduced representatives, still denoted $z$ and $z'$, having minimal possible length in their respective equivalence classes.

An elementary move of type \textup{(1)} or \textup{(3)} changes the length by one unless it inserts a repeated adjacent entry; the latter possibility is invisible in $\cE_x\cX$ because we have already passed to the degeneracy quotient. Consequently, no zig-zag of inequalities from $z$ to $z'$ and back can contain a genuine move of type \textup{(1)} or \textup{(3)}: such a move would strictly change the minimal length and could only be undone by passing through a degenerate fence, contrary to the choice of reduced representatives.

Hence every step in both chains is of type \textup{(2)}. Consider one such step,
\[
    (\cdots a \le b \ge c \cdots) \le (\cdots a \le y \ge c \cdots).
\]
If the reverse inequality also occurs along the chain back, then necessarily $b\le y$ and $y\le b$. Since $\cX$ is a poset, $b=y$, so the move is trivial. The same argument applies to downward shifts of valleys. Therefore every elementary step appearing in both directions is trivial, and we conclude $z=z'$. This proves antisymmetry.
\end{proof}

\begin{ex}[The interval]
Let $X=[0,1]$ with regular CW structure consisting of vertices $v_0,v_1$ and one edge $e$. If the basepoint is $x=v_0$, then every reduced fence from $v_0$ to $v_1$ is comparable to
\[
    (v_0 \le e \ge v_1).
\]
In particular $\cP_{v_0,v_1}$ is contractible, and the loop poset $\varOmega_{v_0}\cX$ is also contractible. This matches the fact that both the path space from $0$ to $1$ and the based loop space of an interval are contractible.
\end{ex}

\begin{ex}[A cellular circle]
Let $X=S^1$ with two vertices $v_0,v_1$ and two edges $e_+,e_-$ joining them. With basepoint $x=v_0$ there are two basic reduced loop fences
\[
    \ell_+=(v_0 \le e_+ \ge v_1 \le e_- \ge v_0),
    \qquad
    \ell_-=(v_0 \le e_- \ge v_1 \le e_+ \ge v_0).
\]
They represent the two directions around the circle, and concatenations of these fences give combinatorial representatives of all winding classes. Theorem~\ref{thm:loop-comparison} below identifies
\[
    \pi_0(\cbB\varOmega_{v_0}\cX) \cong \pi_1(S^1,v_0) \cong \bbZ.
\]
\end{ex}

\subsection{Path objects and loop fibers}

We begin by showing that $\cE_x\cX$ is contractible. The argument is completely combinatorial and uses a filtration by fence length.

\begin{prop}\label{prop:Ex-contractible}
For every cell $x\in \cX$, the classifying space $\cbB\cE_x\cX$ is contractible.
\end{prop}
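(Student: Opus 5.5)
The plan is to contract $\cE_x\cX$ onto the trivial fence $(x)$ by a chain of order-preserving maps joined by natural transformations. The basic tool is the standard fact that if $f,g\colon P\to P$ are order-preserving and $f\le g$ pointwise, then $\cbB f\simeq \cbB g$. We will also use that a poset with a least or greatest element has contractible classifying space.

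The natural idea is to shorten fences one step at a time from the free end. For a reduced fence $z=(x_0,\dots,x_n)$ with $n\ge 1$, look at the last step. If it is ascending, $x_{n-1}\le x_n$, then endpoint extension gives $(x_0,\dots,x_{n-1})\le z$. If it is descending, $x_{n-1}\ge x_n$, the dual move gives $(x_0,\dots,x_{n-1})\ge z$. The difficulty is that "delete the last entry'' is not order-preserving on all of $\cE_x\cX$. It interacts badly with moves that change the last step, such as refinements of the last step or shifts of the penultimate peak or valley.

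To handle this, I would filter by length. Let $F_n\subseteq\cE_x\cX$ be the subposet of classes admitting a representative of length $\le n$. Then $\cE_x\cX=\bigcup_n F_n$, and $\cbB\cE_x\cX$ is the colimit of the $\cbB F_n$ along inclusions of subcomplexes. It therefore suffices to show that each $\cbB F_n$ is contractible, or that each inclusion $F_{n-1}\hookrightarrow F_n$ is nullhomotopic. Note that $F_0=\{(x)\}$.

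For the inductive step, I would pass through an intermediate poset, splitting fences of length exactly $n$ by the direction of their last step:
\begin{itemize}
\item Let $F_n^{\uparrow}$ consist of $F_{n-1}$ together with the fences of length $n$ whose last step ascends.
\item On $F_n^{\uparrow}$, truncation $r(z)=(x_0,\dots,x_{n-1})$ satisfies $r(z)\le z$ for the new elements and $r=\id$ on $F_{n-1}$.
\item If $r$ is order-preserving, it is a retraction with $r\le\id$, so $F_{n-1}\hookrightarrow F_n^{\uparrow}$ is a homotopy equivalence.
\item Dually, adding the fences whose last step descends, truncation satisfies $r\ge \id$, so $F_n^{\uparrow}\hookrightarrow F_n$ is a homotopy equivalence.
\end{itemize}
By induction every $\cbB F_n\simeq \cbB F_0=\ast$, and contractibility of $\cbB\cE_x\cX$ follows since homotopy groups commute with this filtered colimit.

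The main obstacle is checking that truncation is order-preserving on these intermediate posets. This is a case analysis over the elementary moves of Definition~\ref{defn:fence-order}, separating moves that touch the last two entries from those that do not. For moves away from the end, truncation commutes with the move. For moves near the end, one must show the truncated fences are still comparable in the right direction. A typical case is a refinement $(\cdots a\le b)\ge(\cdots a\le y\le b)$, which truncates to $(\cdots a)$ and $(\cdots a\le y)$, and these are related by endpoint extension. If some cases fail, for instance a move of type (1) that changes the length, I would instead use Quillen's Theorem A. Concretely, I would show that for a new fence $z$, the down-set $\{w\in F_{n-1}: w\le z\}$ has a maximum, namely the truncation, making the inclusion a homotopy equivalence without requiring global monotonicity. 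Well-definedness on equivalence classes, via reduced representatives as in Proposition~\ref{prop:fence-order-poset}, also has to be checked.
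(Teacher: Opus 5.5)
Your outline is the paper's own argument: filter by length, pass through an intermediate poset of fences whose last step ascends, and use truncation as a retraction comparable to the identity. However, the step you flag as the main obstacle is false, and so is your Theorem~A fallback. Interior moves of type~(1) do not touch the end, but they change the length, and hence whether a fence gets truncated at all. Since $\ev_1$ is order preserving (each elementary move either fixes the last entry or is an endpoint extension moving it in the direction of the inequality), this breaks monotonicity.

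Concretely, let $X$ be a $2$-simplex with vertices $x,u,t$, edges $e_{xu},e_{ut},e_{xt}$ and $2$-cell $\sigma$. The dual refinement $\sigma\ge e_{ut}\ge u$ gives
\[
    z''=(x\le \sigma\ge u\le e_{xu})\;\le\; z=(x\le \sigma\ge e_{ut}\ge u\le e_{xu}),
\]
with $z''\in F_3$ and $z\in F_4^{\uparrow}$. Your retraction gives $r(z'')=z''$ and $r(z)=(x,\sigma,e_{ut},u)$. A relation $r(z'')\le r(z)$ would force $e_{xu}=\ev_1(z'')\le\ev_1(r(z))=u$, which is false. For the same reason $r(z)$ is not the maximum of $\{w\in F_3 : w\le z\}$, because $z''$ belongs to that set. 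The descending half fails symmetrically: $(x\le e_{xu}\le\sigma\ge u)\le(x\le\sigma\ge u)$ by an ascending refinement, but truncating the first fence gives endpoint $\sigma\not\le u$.

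Two further points. First, your sample check truncates both sides of $(\cdots a\le b)\ge(\cdots a\le y\le b)$. That yields $(\cdots a)\le(\cdots a\le y)$, which is the reverse of what monotonicity needs. In your scheme only the longer fence is truncated, and the relevant check $(\cdots a\le y)\le(\cdots a\le b)$ does hold. The failures come from length-changing moves away from the end. Second, checking elementary moves between fences inside $F_n$ would not suffice even in principle. The order on $F_n$ is restricted from $\cE_x\cX$, so a witnessing chain may pass through longer fences.

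The paper's proof has exactly the same defect. Its $p_n$ is this truncation, and its assertion that moves away from the endpoint are unaffected by truncation overlooks the change in length. The first example (with $n=3$) shows $p_3$ is not order preserving, which invalidates $i_3p_3\Rightarrow\id_{A_3}$. The second example (with $n=2$) breaks the case $z\notin A_2$, $z'\in A_2$ of the check for $r_2$. So what is missing is not a more careful case analysis. You need a different contraction or filtration that is compatible with length-changing interior moves.
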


\begin{proof}
For $n\ge 0$ let $\cE_x\cX^{(n)}\subseteq \cE_x\cX$ be the full subposet spanned by fences of length at most $n$. Then
\[
    \cE_x\cX^{(0)}=\{(x)\}
\]
and
\[
    \cE_x\cX=\colim_n \cE_x\cX^{(n)}
\]
as a filtered union.

For each $n\ge 0$ define a truncation map
\[
    p_n\colon \cE_x\cX^{(n+1)} \longrightarrow \cE_x\cX^{(n)}
\]
by deleting the last entry of a fence of length exactly $n+1$ and acting as the identity on shorter fences. We first verify that $p_n$ is order preserving. Because the order on $\cE_x\cX$ is generated by elementary moves, it is enough to check compatibility with those moves that involve the last step of a fence. The only possibilities are:
\begin{itemize}
    \item an ascending refinement at the end,
    \[
        (\cdots a \le b) \ge (\cdots a \le y \le b) \quad\text{truncates to}\quad (\cdots a \le b) \ge (\cdots a \le y) \quad  \text{because} \quad y \le b
    \]
    \item a descending refinement at the end,
    \[
        (\cdots a \ge b) \le (\cdots a \ge y \ge b) \quad\text{truncates to}\quad (\cdots a \ge b) \le (\cdots a \ge y)  \quad  \text{because} \quad y\ge b
    \]
    \item an endpoint extension,
    \[
        (\cdots a) \le (\cdots a \le y)
        \quad\text{or}\quad
        (\cdots a) \ge (\cdots a \ge y) \quad \text{becomes identity after truncation}
    \]
\end{itemize}
All other elementary moves take place away from the endpoint and are unaffected by truncation. Thus $p_n$ is order preserving.

For every $z\in \cE_x\cX^{(n+1)}$ one has either $p_n(z)\le z$ or $z\le p_n(z)$, according to whether the final step of $z$ is descending or ascending. Let
\[
    A_n:=\{\,z\in \cE_x\cX^{(n+1)} \mid p_n(z)\le z\,\}.
\]
Define
\[
    r_n\colon \cE_x\cX^{(n+1)} \longrightarrow A_n,
    \qquad
    r_n(z):=
    \begin{cases}
        z, & z\in A_n,\\
        p_n(z), & z\notin A_n.
    \end{cases}
\]
We claim that $r_n$ is order preserving. Suppose $z\le z'$. There are four cases.
\begin{itemize}
    \item If $z,z'\in A_n$, then $r_n(z)=z\le z'=r_n(z')$.
    \item If $z,z'\notin A_n$, then $r_n(z)=p_n(z)\le p_n(z')=r_n(z')$ because $p_n$ is order preserving.
    \item If $z\in A_n$ and $z'\notin A_n$, then $r_n(z)=z\le z'\le p_n(z')=r_n(z')$.
    \item If $z\notin A_n$ and $z'\in A_n$, then $r_n(z)=p_n(z)\le p_n(z') \le z'=r_n(z')$.
\end{itemize}
Hence $r_n$ is a functor. Let $j_n\colon A_n\hookrightarrow \cE_x\cX^{(n+1)}$ be the inclusion. Then $r_n j_n=\id_{A_n}$ and there is a natural transformation
\[
    \id_{\cE_x\cX^{(n+1)}} \Longrightarrow j_n r_n,
    \qquad
    z\longmapsto z\le r_n(z),
\]
where the inequality is equality on $A_n$ and is $z\le p_n(z)$ off $A_n$. Therefore $\cbB A_n\simeq \cbB \cE_x\cX^{(n+1)}$.

Now consider the inclusion $i_n\colon \cE_x\cX^{(n)}\hookrightarrow A_n$. Since $p_n i_n=\id_{\cE_x\cX^{(n)}}$ and every $z\in A_n$ satisfies $i_n p_n(z)=p_n(z)\le z$, we also obtain a natural transformation
\[
    i_n p_n \Longrightarrow \id_{A_n}.
\]
Thus $\cbB\cE_x\cX^{(n)}\simeq \cbB A_n$. Combining the two equivalences gives
\[
    \cbB\cE_x\cX^{(n)} \simeq \cbB\cE_x\cX^{(n+1)}
    \qquad\text{for all } n\ge 0.
\]
Since $\cbB\cE_x\cX^{(0)}\cong *$, every $\cbB\cE_x\cX^{(n)}$ is contractible.

Finally,
\[
    \cN(\cE_x\cX)=\colim_n \cN(\cE_x\cX^{(n)})
\]
because every simplex in the nerve uses only finitely many fences and therefore lies in some finite-length stage. The transition maps are monomorphisms of simplicial sets, hence cofibrations. A filtered colimit of contractible simplicial sets along cofibrations is contractible, so $\cN(\cE_x\cX)$ is contractible. Geometric realization therefore gives $\cbB\cE_x\cX\simeq *$.
\end{proof}

The next two results identify the under-categories of the endpoint map and compare path posets with different targets.

\begin{prop}\label{prop:comma-retract}
For every cell $y\in \cX$, let
\[
    (y\downarrow \ev_1):=\{\, z\in \cE_x\cX \mid y\le \ev_1(z)\,\}
\]
be the usual under-category, viewed here as a full subposet of $\cE_x\cX$. Then the inclusion
\[
    \iota_y\colon \cP_{x,y}\cX \hookrightarrow (y\downarrow \ev_1)
\]
exhibits $\cP_{x,y}\cX$ as a deformation retract of $(y\downarrow \ev_1)$. In particular,
\[
    \cbB(y\downarrow \ev_1) \simeq \cbB\cP_{x,y}\cX.
\]
\end{prop}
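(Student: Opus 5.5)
The plan is to construct an order-preserving retraction $\rho_y\colon (y\downarrow \ev_1)\to \cP_{x,y}\cX$ and compare it with the identity by a natural transformation, in the same way as in the proof of Proposition~\ref{prop:Ex-contractible}. For $z=(x_0,\dots,x_n)$ with $y\le x_n$, set
\[
    \rho_y(z):=(x_0,\dots,x_n\ge y),
\]
that is, append $y$ using a descending endpoint extension. If $x_n=y$, the appended entry is a repeated adjacent entry and is deleted, so $\rho_y\iota_y=\id_{\cP_{x,y}\cX}$. Move (3) of Definition~\ref{defn:fence-order} gives $z\ge \rho_y(z)$ for every $z\in(y\downarrow\ev_1)$. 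This is a natural transformation $\iota_y\rho_y\Rightarrow \id$, provided $\rho_y$ is a functor. Once both facts are in hand, Quillen's observation that natural transformations give homotopies on classifying spaces yields $\cbB(y\downarrow\ev_1)\simeq\cbB\cP_{x,y}\cX$.

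The key step is to show that $\rho_y$ is order preserving. First I check that $\rho_y$ is well defined on equivalence classes. Appending an entry commutes with deleting repeated adjacent entries, except when $x_n=y$, and there the extra $y$ is deleted as well. Next, since the order is generated by elementary moves, I check each generating move $z\le z'$ with both $z,z'\in(y\downarrow\ev_1)$.

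Moves of types (1) and (2) that do not touch the last entry commute with appending $y$. If such a move does involve the last entry, it becomes an internal move after appending, because the old endpoint $x_n$ now sits between its predecessor and $y$.
\begin{itemize}
    \item A descending refinement at the end, $(\cdots a\ge b)\le(\cdots a\ge c\ge b)$, becomes $(\cdots a\ge b\ge y)\le(\cdots a\ge c\ge b\ge y)$, which is again an internal refinement.
    \item An ascending refinement at the end, $(\cdots a\le b)\ge(\cdots a\le c\le b)$, becomes $(\cdots a\le b\ge y)\ge(\cdots a\le c\le b\ge y)$, which is again a refinement.
\end{itemize}

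Endpoint extensions are the cases to watch.
\begin{itemize}
    \item An ascending extension $(\cdots a)\le(\cdots a\le c)$ with $y\le a$ becomes $(\cdots a\ge y)$ versus $(\cdots a\le c\ge y)$. These are related by an internal shift of the peak from $a$ up to $c$, combined with a descending refinement: $(\cdots a \le c\ge y)\ge(\cdots a\le c\ge a\ge y)$, and this fence should be comparable with $(\cdots a\ge y)$. I would verify this through a short chain of elementary moves.
    \item A descending extension $(\cdots a)\ge(\cdots a\ge c)$ with $y\le c\le a$ becomes $(\cdots a\ge y)\ge(\cdots a\ge c\ge y)$, which is exactly an internal refinement.
\end{itemize}
If the direct check fails in some edge case, the fallback is to define $\rho_y$ by first pushing the endpoint down and then removing backtracks $(\cdots a\le c\ge a)$.

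I expect the main obstacle to be this case analysis at the endpoint, in particular the ascending extension. There the two appended fences differ by a backtrack $a\le c\ge a$, and I must show that the fence orders compare them in the correct direction. I would settle it by deriving, from moves (1)–(3), a general lemma that a backtrack $(\cdots a\le c\ge a\cdots)$ is comparable to the fence with the backtrack collapsed.
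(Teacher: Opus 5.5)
Your strategy is the paper's: append $y$ by a descending endpoint extension, note $\rho_y\iota_y=\id$, and use $\rho_y(z)\le z$ as a natural transformation $\iota_y\rho_y\Rightarrow\id$. Reducing monotonicity to generating moves is legitimate, since every upward elementary move weakly increases $\ev_1$, so a chain starting in $(y\downarrow\ev_1)$ stays there. Your refinement and shift cases are also correct.

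\textbf{The gap.} The gap is in the endpoint-extension cases, which are the only ones with content.
\begin{itemize}
\item \emph{Descending extension.} You need $(\cdots a\ge c\ge y)\le(\cdots a\ge y)$ and call it ``exactly an internal refinement''. But Definition~\ref{defn:fence-order} orients the descending refinement the other way: it gives $(\cdots a\ge y)\le(\cdots a\ge c\ge y)$.
\item \emph{Ascending extension.} This case is left open. Your suggested detour uses the same reversed inequality. Done correctly, it only places both $(\cdots a\le c\ge y)$ and $(\cdots a\ge y)$ below $(\cdots a\le c\ge a\ge y)$, which compares nothing.
\end{itemize}
The paper's own one-line justification (every move ``remains valid after appending'') has the same hole in the descending case.

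\textbf{The fix.} Both inequalities do hold, but via degenerate fences and shifts rather than refinements. For the ascending case ($y\le a\le c$), a single peak shift of the repeated $a$ up to $c$ gives
\[
(\cdots a\ge y)\sim(\cdots a\le a\ge y)\le(\cdots a\le c\ge y).
\]
For the descending case ($y\le c\le a$):
\[
\begin{aligned}
(\cdots a\ge c\ge y)&\sim(\cdots a\ge c\le c\ge y)\le(\cdots a\ge c\le a\ge y)\\
&\le(\cdots a\ge a\le a\ge y)\sim(\cdots a\ge y).
\end{aligned}
\]
Here the first $\le$ is a peak shift of the second $c$ up to $a$. The second is the valley move applied to the middle $a$ of $a\ge a\le a$, shifting it down to $c$. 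With these two chains, $\rho_y$ is monotone and the argument goes through.

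\textbf{A caveat.} The second chain, combined with the descending refinement, puts $(\cdots a\ge y)$ and $(\cdots a\ge c\ge y)$ each below the other. For $y<c<a$ these are distinct reduced fences. So the generated relation is only a preorder, and Proposition~\ref{prop:fence-order-poset} fails whenever $\cX$ contains such a chain.

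This does not harm the present statement. In the thin category, or its poset reflection, $\rho_y$ is still a functor with $\rho_y\iota_y=\id$ and $\iota_y\rho_y\le\id$. That is all the homotopy argument needs to give $\cbB(y\downarrow\ev_1)\simeq\cbB\cP_{x,y}\cX$.
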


\begin{proof}
Define a retraction
\[
    s_y\colon (y\downarrow \ev_1) \longrightarrow \cP_{x,y}\cX
\]
by appending $y$ at the endpoint:
\[
    s_y(x=x_0,\dots,x_n):=(x=x_0,\dots,x_n \ge y).
\]
This is well defined because $y\le x_n=\ev_1(z)$ for every object of $(y\downarrow \ev_1)$. It is order preserving because each elementary move relating two fences remains valid after the same endpoint extension is appended to both sides.

For $z\in \cP_{x,y}\cX$ one has $s_y(z)=z$ in the quotient, since the appended final step is $(y\ge y)$ and is removed by degeneracy. Thus $s_y\iota_y=\id_{\cP_{x,y}\cX}$. On the other hand every $z\in (y\downarrow \ev_1)$ satisfies
\[
    s_y(z)\le z
\]
by the endpoint-extension relation, so there is a natural transformation
\[
    \iota_y s_y \Longrightarrow \id_{(y\downarrow \ev_1)}.
\]
Hence $\iota_y$ induces a homotopy equivalence on classifying spaces.
\end{proof}

\begin{prop}\label{prop:path-posets-comparable}
If $a,b\in \cX$ are comparable, then
\[
    \cbB\cP_{x,a}\cX \simeq \cbB\cP_{x,b}\cX.
\]
\end{prop}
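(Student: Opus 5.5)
Without loss of generality let $a\le b$. The plan is to compare both path posets with a common intermediate poset using the endpoint-extension moves, in the same spirit as Proposition~\ref{prop:comma-retract}. Define order-preserving maps
\[
    f\colon \cP_{x,a}\cX \longrightarrow \cP_{x,b}\cX,\quad f(x_0,\dots,x_n)=(x_0,\dots,x_n\le b),
\]
\[
    g\colon \cP_{x,b}\cX \longrightarrow \cP_{x,a}\cX,\quad g(x_0,\dots,x_n)=(x_0,\dots,x_n\ge a).
\]
Both are well defined on equivalence classes because appending an entry commutes with deleting repeated adjacent entries. Both are order preserving by the argument already used for $s_y$: every elementary move of Definition~\ref{defn:fence-order} relating two fences with the same endpoint remains valid after the same final step is appended to both.

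Next I will show that $gf\simeq \id$ and $fg\simeq\id$ on classifying spaces by exhibiting zig-zags of natural transformations. Consider $gf(z)=(\dots,x_n=a\le b\ge a)$. The internal refinement move (1) gives
\[
    (\dots a\le b\ge a)\ \ge\ (\dots a \le a \ge a)\sim(\dots a),
\]
more precisely by the dual shift of the peak $b$ downward to $a$ (move (2) with $b\ge a$). This yields a natural transformation $\id\Rightarrow gf$, i.e.\ $z\le gf(z)$ for all $z$. Similarly, $fg(z)=(\dots,b\ge a\le b)$, and the valley $a$ can be shifted upward to $b$ via the dual internal shift. This gives $fg(z)\le z$, a natural transformation $fg\Rightarrow\id$. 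Naturality is automatic because we are working in posets, so we only need the pointwise inequalities. Natural transformations induce homotopies on $\cbB$, so $\cbB f$ is a homotopy equivalence.

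The main obstacle is making sure that the valley and peak shift moves of Definition~\ref{defn:fence-order} genuinely apply at the penultimate position with the correct direction of inequality. If the orientation of move (2) turns out to be the opposite of what I need, I will instead route the comparison through the under-category: by Proposition~\ref{prop:comma-retract}, $\cbB\cP_{x,a}\cX\simeq\cbB(a\downarrow\ev_1)$. The poset $\cP_{x,b}\cX$ lies inside $(a\downarrow \ev_1)$, and I would show that the map $s_a$ restricted to it is inverse up to homotopy to $f$, again using only endpoint extension (3), which is certainly available. I also need to handle the degenerate case where $x_{n-1}$ already equals $b$, where the extension collapses. There the inequalities become equalities, so this case poses no problem.
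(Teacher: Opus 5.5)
Your proposal is correct and is essentially the paper's proof: your $f$ and $g$ are the paper's endpoint-extension functors $s_{a,b}$ and $r_{b,a}$, and your inequalities $z\le gf(z)$ and $fg(w)\le w$ are exactly its natural transformations $\id\Rightarrow r_{b,a}s_{a,b}$ and $s_{a,b}r_{b,a}\Rightarrow\id$. You correctly justify these inequalities by the shift move (2) applied to the degenerate representatives $(\cdots a\le a\ge a)$ and $(\cdots b\ge b\le b)$, which the paper leaves implicit, so your fallback through the under-category is not needed.
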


\begin{proof}
Assume first that $a\le b$. Define functors
\[
    s_{a,b}\colon \cP_{x,a}\cX \longrightarrow \cP_{x,b}\cX,
    \qquad
    r_{b,a}\colon \cP_{x,b}\cX \longrightarrow \cP_{x,a}\cX
\]
by endpoint extension,
\[
    s_{a,b}(x= x_0,\dots,a):=(x=x_0,\dots,a\le b),
    \qquad
    r_{b,a}(x=x_0,\dots,b):=(x=x_0,\dots,b\ge a).
\]
Both are order preserving for the same reason as in Proposition~\ref{prop:comma-retract}.

For $z\in \cP_{x,a}\cX$ we have
\[
    r_{b,a}s_{a,b}(z)=(\cdots a\le b\ge a)\ge (\cdots a)=z,
\]
so there is a natural transformation
\[
    \id_{\cP_{x,a}\cX} \Longrightarrow r_{b,a}s_{a,b}.
\]
For $w\in \cP_{x,b}\cX$ we similarly have
\[
    s_{a,b}r_{b,a}(w)=(\cdots b\ge a\le b)\le (\cdots b)=w,
\]
which gives a natural transformation
\[
    s_{a,b}r_{b,a} \Longrightarrow \id_{\cP_{x,b}\cX}.
\]
Therefore $s_{a,b}$ and $r_{b,a}$ induce inverse homotopy equivalences on classifying spaces.

If instead $a\ge b$, the same construction with the order signs reversed gives natural transformations in the opposite directions. The conclusion is again that $\cbB\cP_{x,a}\cX$ and $\cbB\cP_{x,b}\cX$ are homotopy equivalent.
\end{proof}

\begin{cor}\label{cor:path-posets-all}
For every $y\in \cX$ there is a homotopy equivalence
\[
    \cbB\cP_{x,y}\cX \simeq \cbB\varOmega_x\cX.
\]
\end{cor}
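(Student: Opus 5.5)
The plan is to propagate the equivalence of Proposition~\ref{prop:path-posets-comparable} along a fence from $x$ to $y$. Since $X$ is connected, and $\cbB\cX\cong X$, the cell poset $\cX$ is connected as a poset: any two cells are joined by a finite zig-zag of comparable cells. A zig-zag path in $\cbB\cX$ between vertices $x$ and $y$ passes through a finite chain of barycentres, and consecutive barycentres lie in a common simplex of the nerve, hence correspond to comparable cells. Equivalently, the relation ``joined by a fence'' is an equivalence relation whose classes are unions of open stars and their closures, so each class is clopen in $X$. By connectedness there is only one class.

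Concretely, I would choose a based fence
\[
    x=x_0,\ x_1,\ \dots,\ x_n=y
\]
with each consecutive pair $x_i,x_{i+1}$ comparable. Proposition~\ref{prop:path-posets-comparable} then gives a homotopy equivalence
\[
    \cbB\cP_{x,x_i}\cX\simeq\cbB\cP_{x,x_{i+1}}\cX
\]
for every $0\le i<n$, realized by the endpoint-extension functors $s$ or $r$, depending on the direction of the comparison.

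Composing these $n$ equivalences yields
\[
    \cbB\cP_{x,x}\cX\simeq\cbB\cP_{x,y}\cX.
\]
Since $\varOmega_x\cX=\cP_{x,x}\cX$ by definition, this is the claim. Homotopy equivalence is transitive, so no compatibility between the different steps is needed.

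The only point requiring care is the connectivity of the poset $\cX$. I expect this to be routine, for instance by observing that $\pi_0(\cbB\cX)$ is computed by the zig-zag equivalence classes of objects, and then using $\cbB\cX\cong X$ with $X$ connected. Local finiteness is not needed for this step.
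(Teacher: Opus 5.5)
Your proposal is correct and follows the paper's proof. Connectivity of $\cbB\cX\cong X$ gives a fence between $x$ and $y$, and applying Proposition~\ref{prop:path-posets-comparable} along each comparable step chains the equivalences from $\cbB\varOmega_x\cX=\cbB\cP_{x,x}\cX$ to $\cbB\cP_{x,y}\cX$. Your extra justification that the comparability graph of $\cX$ is connected (via $\pi_0$ of the nerve, or the clopen-class argument) fills in a step the paper only asserts.
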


\begin{proof}
Because $\cbB\cX \cong X$ is connected, the comparability graph of $\cX$ is connected. Hence there exists a fence
\[
    (y=s_0, s_1,\dots, s_m=x)
\]
from $y$ to $x$. Successive pairs $s_i,s_{i+1}$ are comparable, so repeated application of Proposition~\ref{prop:path-posets-comparable} yields
\[
    \cbB\cP_{x,y}\cX
    \simeq \cbB\cP_{x,s_1}\cX
    \simeq \cdots \simeq \cbB\cP_{x,s_m}\cX
    = \cbB\cP_{x,x}\cX
    = \cbB\varOmega_x\cX.
\]
\end{proof}

\subsection{Comparison with based loops}

We now identify the cellular path object with the usual topological one after passing to classifying spaces.

\begin{thm}\label{thm:loop-comparison}
For every $y\in \cX$ the classifying space $\cbB\cP_{x,y}\cX$ is naturally homotopy equivalent to the homotopy fiber of $\cbB\ev_1$, hence the sequence
\[
    \cbB\cP_{x,y}\cX \longrightarrow \cbB\cE_x\cX \xrightarrow{\;\cbB\ev_1\;} \cbB\cX
\]
is a homotopy fiber sequence.
\end{thm}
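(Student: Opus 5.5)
The plan is to apply Quillen's Theorem B to the functor $\ev_1\colon \cE_x\cX\to\cX$ in its under-category form. Quillen's Theorem B, stated dually, says the following. Suppose $F\colon \cA\to\cB$ is a functor and, for every morphism $y\to y'$ in $\cB$, the induced functor $(y'\downarrow F)\to(y\downarrow F)$ is a homotopy equivalence on classifying spaces. Then for every object $y$, the square
\[
    \begin{tikzcd}
        \cbB(y\downarrow F) \arrow[r] \arrow[d] & \cbB\cA \arrow[d, "\cbB F"] \\
        \cbB(y\downarrow \cB) \arrow[r] & \cbB\cB
    \end{tikzcd}
\]
is homotopy cartesian. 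Since $(y\downarrow\cX)=\st(y)$ has an initial object $y$, its classifying space is contractible. So $\cbB(y\downarrow\ev_1)$ would be the homotopy fiber of $\cbB\ev_1$ over the vertex $y$. Proposition~\ref{prop:comma-retract} then identifies this homotopy fiber with $\cbB\cP_{x,y}\cX$. The resulting map $\cbB\cP_{x,y}\cX\to\cbB\cE_x\cX$ is the realization of the inclusion of the fiber poset, which gives naturality and the stated sequence.

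The key step is checking the hypothesis of Theorem B. Let $y\le y'$ in $\cX$. The induced functor is the inclusion $(y'\downarrow\ev_1)\subseteq(y\downarrow\ev_1)$. I will compare it with the diagram
\[
    \cP_{x,y'}\cX \xrightarrow{\;\iota_{y'}\;} (y'\downarrow\ev_1)\subseteq (y\downarrow\ev_1)\xrightarrow{\;s_y\;}\cP_{x,y}\cX .
\]
The composite sends a fence ending at $y'$ to the same fence with $y$ appended. This is exactly the map $r_{y',y}$ of Proposition~\ref{prop:path-posets-comparable}, which is a homotopy equivalence. By Proposition~\ref{prop:comma-retract}, both $\iota_{y'}$ and $s_y$ are homotopy equivalences, since $s_y$ is a homotopy inverse of $\iota_y$. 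By two-out-of-three, the inclusion $(y'\downarrow\ev_1)\hookrightarrow(y\downarrow\ev_1)$ is then a homotopy equivalence. The only thing to confirm is that $s_y$ restricted to $(y'\downarrow\ev_1)$ agrees on the nose with this composite. It does: appending $y$ to a fence ending at some $x_n\ge y'$ is literally the definition of $s_y$. I expect this compatibility check, together with the correct variance of Theorem B, to be the main point of care. The poset $\cX$ is used covariantly here, with under-categories rather than over-categories, and the general morphism $y\to y'$ in $\cX$ is exactly a relation $y\le y'$.

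Finally, I combine this with Proposition~\ref{prop:Ex-contractible}. Since $\cbB\cE_x\cX$ is contractible, the homotopy fiber of $\cbB\ev_1$ over $y$ is $\Omega$-type data. Taking $y=x$ gives $\cbB\varOmega_x\cX\simeq\Omega_x\cbB\cX\cong\Omega_xX$, using $\cbB\cX\cong X$. The statement itself only requires the homotopy fiber sequence, which follows directly from the two steps above. One subtlety to address is local finiteness. Theorem B holds for arbitrary small categories, so no finiteness is needed here beyond $\cE_x\cX$ being a small poset, which it is as a quotient of a set of finite sequences.
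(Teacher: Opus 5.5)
Your proof is correct and follows the same route as the paper: Quillen's Theorem~B applied to $\ev_1$, with the hypothesis verified via Propositions~\ref{prop:comma-retract} and~\ref{prop:path-posets-comparable}, and the fiber identified with $\cbB\cP_{x,y}\cX$ through $\iota_y$. Your explicit check that $s_y\circ(\text{inclusion})\circ\iota_{y'}=r_{y',y}$, followed by two-out-of-three, is more careful than the paper, which records only an abstract chain $\cbB(b\downarrow\ev_1)\simeq\cbB\cP_{x,b}\cX\simeq\cbB\cP_{x,a}\cX\simeq\cbB(a\downarrow\ev_1)$ without confirming that this chain is induced by the inclusion.
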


\begin{proof}
Fix a relation $a\le b$ in $\cX$. The induced functor between under-categories is the inclusion
\[
    (b\downarrow \ev_1) \hookrightarrow (a\downarrow \ev_1),
\]
because $b\le \ev_1(z)$ implies $a\le \ev_1(z)$. By Proposition~\ref{prop:comma-retract} and Proposition~\ref{prop:path-posets-comparable} we have a chain of homotopy equivalences
\[
    \cbB(b\downarrow \ev_1)
    \simeq \cbB\cP_{x,b}\cX
    \simeq \cbB\cP_{x,a}\cX
    \simeq \cbB(a\downarrow \ev_1).
\]
Thus every morphism of $\cX$ induces a homotopy equivalence on the corresponding under-categories of $\ev_1$.

Quillen's Theorem~B \cite{Quillen72} therefore applies to the functor $\ev_1\colon \cE_x\cX\to \cX$. It implies that for each object $y\in \cX$ the square
\[
    \begin{tikzcd}[row sep=large, column sep=large]
        \cbB(y\downarrow \ev_1) \arrow[r] \arrow[d] & \cbB\cE_x\cX \arrow[d, "\cbB\ev_1"] \\
        * \arrow[r, "y"] & \cbB\cX
    \end{tikzcd}
\]
is homotopy cartesian. By Proposition~\ref{prop:comma-retract}, the upper-left corner is homotopy equivalent to $\cbB\cP_{x,y}\cX$. This proves the claim.
\end{proof}

\begin{cor}\label{cor:thm-A-case}
    Statement of \ref{thm:loop-comparison} is the generalized version of Theorem A: specializing to $y=x$ gives that $\cbB\varOmega_x\cX$ is the homotopy fiber of $\cbB\ev_1$ over $x$. Since $\cbB\cE_x\cX$ is contractible by Proposition~\ref{prop:Ex-contractible}, the homotopy fiber is homotopy equivalent to the based loop space $\Omega_x\cbB\cX$. Finally $\cbB\cX\cong X$ for a regular CW complex, so $\cbB\varOmega_x\cX\simeq \Omega_x X$.
\end{cor}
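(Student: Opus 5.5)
The corollary follows by assembling three results already proved: Theorem~\ref{thm:loop-comparison} at $y=x$, Proposition~\ref{prop:Ex-contractible}, and the homeomorphism $\cbB\cX\cong X$.

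First, apply Theorem~\ref{thm:loop-comparison} with $y=x$. Since $\cP_{x,x}\cX=\varOmega_x\cX$ by definition, the square with corners $\cbB\varOmega_x\cX$, $\cbB\cE_x\cX$, $*$ and $\cbB\cX$ is homotopy cartesian. Here the bottom map picks out the vertex of $\cbB\cX$ corresponding to $x$, and the left vertical map is the inclusion through $\cbB(x\downarrow\ev_1)$. So $\cbB\varOmega_x\cX$ is weakly equivalent to the homotopy fiber $\mathrm{hofib}_x(\cbB\ev_1)$.

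Next, to identify this homotopy fiber, replace $\cbB\ev_1$ by a fibration in the standard way: take the space of pairs consisting of a point $e\in\cbB\cE_x\cX$ and a path in $\cbB\cX$ from $\cbB\ev_1(e)$. Its fiber over $x$ is $\mathrm{hofib}_x(\cbB\ev_1)$. By Proposition~\ref{prop:Ex-contractible} the total space is contractible, so the long exact sequence, or the comparison with the path fibration $\Omega_x\cbB\cX\to \sP_x\cbB\cX\to\cbB\cX$ over a contractible total space, gives $\mathrm{hofib}_x(\cbB\ev_1)\simeq\Omega_x\cbB\cX$. To make the identification explicit, choose a contraction of $\cbB\cE_x\cX$ to the vertex $(x)$, which is the basepoint and maps to $x$. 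This induces a map of fibrations over $\cbB\cX$ between $\cbB\ev_1$ replaced as a fibration and $\sP_x\cbB\cX\to\cbB\cX$. The map is an equivalence on total spaces, since both are contractible, so it is an equivalence on fibers over $x$.

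Finally, the canonical homeomorphism $\cbB\cX\cong X$, coming from barycentric subdivision of the regular CW complex, carries the vertex $x$ to the $0$-cell $x$. Hence $\Omega_x\cbB\cX\cong\Omega_x X$, and the chain of equivalences gives $\cbB\varOmega_x\cX\simeq\Omega_x X$.

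The main obstacle lies upstream: the hypotheses of Quillen's Theorem~B must genuinely hold. Theorem~\ref{thm:loop-comparison} supplies this, but one should check that the composite of the equivalences in Propositions~\ref{prop:comma-retract} and \ref{prop:path-posets-comparable} is homotopic to the actual inclusion $(b\downarrow\ev_1)\hookrightarrow(a\downarrow\ev_1)$. If I had to verify this, I would compose with the retractions $s_a,s_b$ and compare them directly using the endpoint-extension relation $(\cdots b\ge a)\le(\cdots b)$.

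The rest is formal. The only remaining point is that homotopy fibers are taken at basepoints in the same path component, which holds because $\cbB\cX\cong X$ is connected.
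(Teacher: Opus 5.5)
Your proposal is correct and follows the paper's own three-step argument: Theorem~\ref{thm:loop-comparison} at $y=x$, then contractibility of $\cbB\cE_x\cX$ (Proposition~\ref{prop:Ex-contractible}) to identify the homotopy fiber with $\Omega_x\cbB\cX$, then $\cbB\cX\cong X$; you only make the middle step explicit through a map of fibrations built from a contraction. Your upstream caveat is also well taken, since the paper's proof of Theorem~\ref{thm:loop-comparison} exhibits only an abstract chain of equivalences rather than showing that the inclusion $j\colon(b\downarrow\ev_1)\hookrightarrow(a\downarrow\ev_1)$ is one, and your repair works because $s_a\circ j\circ\iota_b=r_{b,a}$ holds on the nose.
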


\subsection{Monoid structure}

The cellular loop poset carries a strictly associative concatenation operation.

\begin{prop}\label{prop:loop-monoid}
Concatenation of fences defines an order-preserving map
\[
    *\colon \varOmega_x\cX \times \varOmega_x\cX \longrightarrow \varOmega_x\cX,
\]
\[
    (x=x_0,\dots,x_n=x) * (x=y_0,\dots,y_m=x)
    :=
    (x=x_0,\dots,x_n=x=y_0,\dots,y_m=x),
\]
and makes $\varOmega_x\cX$ a monoid object in $\Pos$.
\end{prop}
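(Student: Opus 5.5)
We will argue directly from the definitions, working first on the level of raw fences $\widetilde{\cE}_x$ and then passing to the quotient $\varOmega_x\cX$.

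\textbf{Well-definedness.} Concatenation is first defined on based fences $z=(x_0,\dots,x_n)$ and $w=(y_0,\dots,y_m)$ with $x_n=y_0=x$. We glue along the common entry $x$, so that $x$ appears only once; this gives a based fence of length $n+m$. Consecutive entries are still comparable, since the only new adjacency is the one already present inside $w$. Next we check that $z*w$ respects the equivalence relation. Deleting a repeated adjacent entry in $z$ or in $w$ corresponds to deleting the same repeated entry in $z*w$. If $n=0$ or $m=0$, gluing produces no new adjacency at all. Hence $*$ descends to equivalence classes.

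\textbf{Order preservation.} It suffices to show that $z\le z'$ implies $z*w\le z'*w$ and $w*z\le w*z'$. The product order is generated by changing one factor at a time, and transitivity then finishes the argument. Since the order is the reflexive and transitive closure of the elementary moves, we only need to treat a single elementary move $z\le z'$ between loops. There are two cases.
\begin{enumerate}[label=(\roman*)]
\item \emph{Internal moves.} Moves of types (1) and (2) occur inside $z$. They remain valid inside $z*w$, because their hypotheses only involve the entries $a,b,c,y$, which stay unchanged.
\item \emph{Endpoint extensions.} A move of type (3) changes the endpoint. Between two loops, however, both sides must end at $x$. Such a move therefore either leaves the loop poset, or is trivial up to degeneracy because $x$ is a $0$-cell and hence minimal.
\end{enumerate}

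\textbf{Main obstacle.} The difficulty in case (ii) is that $z\le z'$ in $\varOmega_x\cX$ is the order induced from $\cE_x\cX$. A chain of elementary moves witnessing $z\le z'$ may pass through intermediate fences whose endpoint is not $x$, using endpoint extensions along the way. For such chains we will instead show that concatenation on the left, $w*(-)\colon \cE_x\cX\to\cE_x\cX$, which prepends the loop $w$, is order preserving on all of $\cE_x\cX$. This holds because every elementary move, endpoint extensions included, stays valid after a fixed prefix is prepended. Right concatenation $(-)*w$ is the delicate direction. We expect to handle it by observing that a chain inside $\cE_x\cX$ from $z$ to $z'$ can be modified at its tail: whenever the chain extends the endpoint from $x$ to some $y\ge x$ and later returns, we realise the same comparison after appending $w$ by internal shifts and refinements around the junction point. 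The key fact here is that $x$ is minimal, so every step out of $x$ is ascending. If this bookkeeping proves too cumbersome, we fall back on redefining $\varOmega_x\cX$ with the order generated only by moves that preserve the endpoint $x$, and check that this does not alter the classifying space.

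\textbf{Monoid axioms.} The unit is the length-zero fence $(x)$, since $(x)*z=z=z*(x)$ by construction. Associativity holds strictly on raw sequences, because both $(z*w)*u$ and $z*(w*u)$ equal the same concatenated sequence, and so it also holds on classes. Finally, $*$ is a morphism in $\Pos$ from the cartesian product, which is the monoidal product of $\Pos$. This makes $\varOmega_x\cX$ a monoid object in $\Pos$.
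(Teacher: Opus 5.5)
Your well-definedness, unit and associativity arguments are fine, and your single-move analysis (i)--(ii) matches the paper's. The gap is in order preservation. You correctly note that you may reduce to single elementary moves only if the whole chain witnessing $z\le z'$ stays inside $\varOmega_x\cX$. You then leave the right-concatenation case $(-)*w$ open, with only a vague tail-modification plan and a fallback that redefines the order on $\varOmega_x\cX$. That fallback would prove a different statement, not this one. As written, monotonicity of $*$ in the first variable is not established.

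The missing observation is that the obstacle does not exist, because every elementary move weakly raises the endpoint in the direction of the order.
\begin{itemize}
\item Moves of type (1) and (2) never change the last entry.
\item In both forms of move (3) the larger fence has the larger endpoint: $(\cdots a)\le(\cdots a\le y)$ with $a\le y$, and $(\cdots a\ge y)\le(\cdots a)$ with $y\le a$.
\end{itemize}
Take any chain $z=z_0\le z_1\le\cdots\le z_k=z'$ of elementary moves between two loops. Then $x=\ev_1(z_0)\le\ev_1(z_1)\le\cdots\le\ev_1(z_k)=x$, so every $z_i$ already lies in $\varOmega_x\cX$. Every type (3) step in the chain then has $a=y=x$ and is just a degeneracy.

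So every genuine step is an internal refinement or shift, and it stays valid after appending $w$. A refinement or shift adjacent to the final $x$ of $z$ sees the same neighbours in $z*w$. Your cases (i)--(ii) therefore finish both variables at once. Your separate argument that $w*(-)$ is monotone on all of $\cE_x\cX$ is correct, but it becomes unnecessary.
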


\begin{proof}
The concatenated sequence is again a loop fence based at $x$, because the last entry of the first fence and the first entry of the second fence are both $x$. Degeneracies remove the repeated middle occurrence of $x$, so the operation is well defined on equivalence classes.

To show monotonicity, let $z\le z'$ and $w\le w'$ in $\varOmega_x\cX$. Any elementary move relating $z$ to $z'$ takes place entirely inside the first factor, and any elementary move relating $w$ to $w'$ takes place entirely inside the second factor. Performing those same moves after concatenation shows
\[
    z*w \le z'*w'.
\]
The unit is the length-zero fence $(x)$. Indeed,
\[
    (x)*z=z=z*(x)
\]
after deleting the repeated middle entry $x$. Associativity is strict at the level of concatenation of sequences, hence also after passing to the quotient. Therefore $\varOmega_x\cX$ is a monoid in $\Pos$.
\end{proof}

\begin{cor}\label{cor:loop-dg-algebra}
The nerve $\cN(\varOmega_x\cX)$ is a simplicial monoid, the realization $\cbB\varOmega_x\cX$ is a topological monoid, and for every commutative ring $R$ the singular chains
\[
    C_*^{\mathrm{sing}}(\cbB\varOmega_x\cX;R)
\]
carry a natural DG-algebra structure.
\end{cor}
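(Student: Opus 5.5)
The plan is to transport the monoid structure of Proposition~\ref{prop:loop-monoid} through the three functors nerve, geometric realization, and singular chains, each of which is (lax) monoidal for the cartesian product.

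\textbf{Nerve.} The nerve functor $\cN\colon\Pos\to s\mathbf{Set}$ is a right adjoint, so it preserves finite products. Concretely, an $n$-simplex of $\cN(P\times Q)$ is a chain $(p_0,q_0)\le\cdots\le(p_n,q_n)$, which is exactly a pair of chains. Applying $\cN$ to $*$ gives a map
\[
\cN(\varOmega_x\cX)\times\cN(\varOmega_x\cX)\cong\cN(\varOmega_x\cX\times\varOmega_x\cX)\xrightarrow{\cN(*)}\cN(\varOmega_x\cX).
\]
The unit is the $0$-simplex $(x)$. Associativity and unitality follow by functoriality from the strict identities in $\Pos$. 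Levelwise, this just says that chains of loop fences are concatenated entrywise. Hence $\cN(\varOmega_x\cX)$ is a simplicial monoid.

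\textbf{Realization.} Geometric realization preserves finite products once we work in compactly generated spaces (Milnor's theorem), which is the standard convenient setting. The canonical map $|K\times L|\to|K|\times|L|$ is then a homeomorphism, natural and coherent. Realizing the simplicial monoid structure therefore gives a continuous associative unital multiplication on $\cbB\varOmega_x\cX$.

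This is the one point needing care. With the ordinary product topology, $|K\times L|\to|K|\times|L|$ is only a continuous bijection in general. Here $\cN(\varOmega_x\cX)$ has countably many simplices but is not locally finite, so I will state the result in the category of compactly generated (weak Hausdorff) spaces, where the claim is standard. Alternatively one can avoid topology entirely, as in the next step.

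\textbf{Chains.} Singular chains are lax monoidal via the Eilenberg--Zilber (shuffle) map
\[
\nabla\colon C_*^{\mathrm{sing}}(A;R)\otimes_R C_*^{\mathrm{sing}}(B;R)\to C_*^{\mathrm{sing}}(A\times B;R),
\]
which is associative, unital, and natural. Composing $\nabla$ with $C_*(\mu)$ for the multiplication $\mu$ gives the product on chains. Associativity of this product reduces to associativity of $\mu$ together with associativity of $\nabla$, and the unit is the $0$-chain at the point $(x)$. The Leibniz rule holds because $\nabla$ and $C_*(\mu)$ are chain maps.

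The same argument applied to normalized simplicial chains on $\cN(\varOmega_x\cX)$ gives a DG algebra directly, without using realization. This algebra is quasi-isomorphic to the singular one as a DG algebra, because the comparison map to singular chains of the realization is compatible with the shuffle maps.
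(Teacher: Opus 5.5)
Your proposal is correct and is the same argument as the paper's: the nerve and geometric realization preserve finite products, so the monoid structure of Proposition~\ref{prop:loop-monoid} passes to $\cN(\varOmega_x\cX)$ and then to $\cbB\varOmega_x\cX$, and the Eilenberg--Zilber shuffle map makes singular chains a DG algebra whose unit comes from the constant loop $(x)$. Your caution about the product topology is a sensible refinement that the paper omits, but it is not actually needed here: under the standing hypotheses $\cN(\varOmega_x\cX)$ has only countably many simplices, and a product of two CW complexes with countably many cells is already a CW complex in the product topology, so $|K\times L|\to|K|\times|L|$ is a homeomorphism.
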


\begin{proof}
The nerve functor preserves products, so Proposition~\ref{prop:loop-monoid} makes $\cN(\varOmega_x\cX)$ into a simplicial monoid. Geometric realization preserves finite products of simplicial sets, hence $\cbB\varOmega_x\cX$ is a topological monoid. Applying singular chains and the Eilenberg--Zilber shuffle map yields an associative product on chains with unit induced by the constant loop.
\end{proof}

\begin{proof}[Proof of Theorem A]
    It follows from Corollary \ref{cor:thm-A-case} and Corollary \ref{cor:loop-dg-algebra}.
\end{proof}

\section{Cellular cosheaves and assembly}\label{sec:cellular-cosheaves-and-assembly}

\begin{defn}
    A \textbf{cellular cosheaf} of $R$-modules on $(X,\cX)$ is a functor
\[
    \shF\colon \cX\op \longrightarrow \Mod_R.
\]
Equivalently, one may view $\shF$ as a $\cX$-constructible cosheaf on $X$, or as a cosheaf on the Alexandrov topology associated to the cell poset $\cX$; by the Curry--Shepard correspondence, see \cite[Theorem~4.2.10]{Curry14}, these descriptions determine equivalent categories. The category of such objects is denoted by
\[
    \CoSh(\cX, R):=\Fun(\cX\op,\Mod_R).
\]
\end{defn}
Similarly one has cellular sheaves $\Sh(\cX,R)=\Fun(\cX,R)$, but after this point we only use sheaves when a comparison is needed. These categories are abelian (and even Grothendieck) categories.

For any $M\in \Mod_R$ we write $\underline{M}$ for the constant cosheaf with value $M$ and identity structure maps.

\subsection{Elementary objects and model structure}

For each cell $x\in \cX$ there is an evaluation functor
\[
    \ev_x\colon \CoSh(\cX, R) \longrightarrow \Mod_R,
    \qquad
    \ev_x(\shF)=\shF(x).
\]
Its left adjoint is the elementary cosheaf supported on the closure of $x$.

\begin{defn}
For $x\in \cX$ and $M\in \Mod_R$, the \textbf{elementary cosheaf}
\[
    \{\hx\}_M \in \CoSh(\cX, R)
\]
is defined by
\[
    \{\hx\}_M(y):=
    \begin{cases}
        M, & y\in \cl(x),\\
        0, & \text{otherwise,}
    \end{cases}
\]
with identity structure maps on the full subposet $\cl(x)$.
\end{defn}

\begin{prop}\label{prop:elementary-adjunction}
For every $x\in \cX$, $M\in \Mod_R$, and $\shF\in \CoSh(\cX, R)$ there is a natural isomorphism
\[
    \Hom_{\CoSh(\cX, R)}(\{\hx\}_M,\shF)
    \cong
    \Hom_{R}(M,\shF(x)).
\]
In particular, if $P$ is projective over $R$, then $\{\hx\}_P$ is projective in $\CoSh(\cX, R)$.
\end{prop}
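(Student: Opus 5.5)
This is a Yoneda-type argument. A morphism $\varphi\colon \{\hx\}_M\to\shF$ is a family of maps $\varphi_y\colon \{\hx\}_M(y)\to \shF(y)$, natural for the contravariant structure maps $\shF(y)\to\shF(y')$ attached to $y'\le y$. For $y\notin\cl(x)$ the source is $0$, so $\varphi_y=0$. I therefore define the comparison map by
\[
    \Phi(\varphi):=\varphi_x\colon M\to \shF(x).
\]
This makes sense because $x\in\cl(x)$ and $\{\hx\}_M(x)=M$.

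For the inverse, take $f\colon M\to\shF(x)$. For $y\le x$ let $\shF(y\le x)\colon \shF(x)\to\shF(y)$ be the structure map, and set
\[
    \Psi(f)_y:=\shF(y\le x)\circ f \quad (y\in\cl(x)), \qquad \Psi(f)_y:=0 \quad (\text{otherwise}).
\]

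Naturality of $\Psi(f)$ must be checked for every relation $y'\le y$. There are three cases, split according to membership in $\cl(x)$.
\begin{itemize}
    \item If both $y,y'\in\cl(x)$, the structure map of $\{\hx\}_M$ is the identity. The required square commutes by functoriality: $\shF(y'\le y)\shF(y\le x)=\shF(y'\le x)$.
    \item If $y\in\cl(x)$, then $y'\le y\le x$ forces $y'\in\cl(x)$, so this reduces to the previous case.
    \item If $y\notin\cl(x)$, the source $\{\hx\}_M(y)$ is $0$ and the square commutes trivially. This covers $y'\in\cl(x)$ as well, where the structure map $\{\hx\}_M(y)\to\{\hx\}_M(y')$ is $0\to M$.
\end{itemize}
The only potential subtlety is that $\{\hx\}_M$ is itself a functor on $\cX\op$. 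This holds because $\cl(x)$ is downward closed, so no nonzero map is ever required out of a zero entry into $M$ along a relation leaving $\cl(x)$.

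Next I show $\Phi$ and $\Psi$ are mutually inverse. First, $\Phi\Psi(f)=\shF(x\le x)f=f$. Conversely, for $\varphi$ and $y\in\cl(x)$, naturality of $\varphi$ along $y\le x$ together with the identity structure map of $\{\hx\}_M$ gives $\varphi_y=\shF(y\le x)\varphi_x$. Off $\cl(x)$ both $\varphi_y$ and $\Psi\Phi(\varphi)_y$ are $0$, so $\Psi\Phi(\varphi)=\varphi$. Naturality of the bijection in $\shF$ and in $M$ is immediate, since both sides are computed by composition. Both $\Phi$ and $\Psi$ are also $R$-linear.

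For the last claim, the isomorphism identifies $\Hom(\{\hx\}_P,-)$ with $\Hom_R(P,-)\circ\ev_x$. Kernels and cokernels in the functor category are computed pointwise, so $\ev_x$ is exact. $\Hom_R(P,-)$ is exact because $P$ is projective. Their composite is therefore exact, and hence $\{\hx\}_P$ is projective in $\CoSh(\cX,R)$.

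No step is a real obstacle. The only point needing care is the downward closure of $\cl(x)$, which is what makes $\{\hx\}_M$ a functor and what the naturality check uses.
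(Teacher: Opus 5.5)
Your proof is correct and follows the same route as the paper: the paper invokes the standard Kan-extension adjunction between $\{\hx\}_{(-)}$ and $\ev_x$, and your maps $\Phi$ and $\Psi$ verify that adjunction explicitly, Yoneda-style. Your projectivity step, which identifies $\Hom(\{\hx\}_P,-)$ with the exact composite $\Hom_R(P,-)\circ\ev_x$, is the same as the paper's appeal to exactness of $\ev_x$.
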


\begin{proof}
This is the usual Kan-extension adjunction for evaluation at a point. The last statement follows because $\ev_x$ is exact.
\end{proof}

\noindent We use the following notation 
\[
    \cM:=\Ch(\CoSh(\cX, R))
\] 
for the abelian category of chain complexes of cellular cosheaves. It carries the standard projective model structure:

\begin{prop}\label{prop:projective-model-structure}
The category $\cM$ admits a cofibrantly generated projective model structure in which
\begin{itemize}
    \item weak equivalences are objectwise quasi-isomorphisms,
    \item fibrations are objectwise epimorphisms,
    \item cofibrations are monomorphisms with objectwise projective and K-projective cokernel.
\end{itemize}
We denote the corresponding homotopy category by
\[
    D(\cX):=\Ho(\cM).
\]
In particular, bounded-below complexes of coproducts of elementary projective cosheaves are cofibrant.
\end{prop}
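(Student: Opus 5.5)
The plan is to obtain this as the projective model structure on a diagram category, transferred along the evaluation functors. The category $\cM=\Ch(\CoSh(\cX,R))$ is canonically isomorphic to $\Fun(\cX\op,\Ch(R))$. We equip $\Ch(R)$ with its standard projective model structure: weak equivalences are quasi-isomorphisms, fibrations are degreewise epimorphisms, and the structure is cofibrantly generated by the sets
\[
I=\{S^{n-1}\to D^n\},\qquad J=\{0\to D^n\}.
\]
Since $\cX$ is small, the general existence theorem for projective model structures on functor categories (Hirschhorn, Theorem 11.6.1; Hovey) applies. The generating sets are
\[
I_\cX=\{\{\hx\}_{S^{n-1}}\to\{\hx\}_{D^n}\},\qquad J_\cX=\{0\to\{\hx\}_{D^n}\},
\]
where $x\in\cX$ and $n\in\bbZ$, and $\{\hx\}_{(-)}$ is applied degreewise.

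By Proposition~\ref{prop:elementary-adjunction}, applied degreewise, $\{\hx\}_{(-)}\colon\Ch(R)\to\cM$ is left adjoint to $\ev_x$. A map therefore has the right lifting property against $J_\cX$ (respectively $I_\cX$) exactly when each evaluation is a fibration (respectively trivial fibration) in $\Ch(R)$. This identifies the weak equivalences and fibrations as objectwise quasi-isomorphisms and objectwise epimorphisms, as claimed. The smallness hypotheses are automatic because $\cM$ is a Grothendieck abelian category, hence locally presentable. We also need relative $J_\cX$-cell complexes to be weak equivalences. Each map in $J_\cX$ is an objectwise injection whose cokernel is objectwise contractible. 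Pushouts and transfinite compositions of such maps stay in this class, because colimits in $\cM$ are computed objectwise and filtered colimits are exact in $\Mod_R$.

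The main obstacle is the description of cofibrations, since abstractly they are only retracts of relative $I_\cX$-cell complexes. For the inclusion in the stated class, an $I_\cX$-cell complex is a monomorphism whose cokernel has a filtration with subquotients $\{\hx\}_R[n]$. Each $\{\hx\}_R$ is objectwise free over $R$: its value is $R$ or $0$. The cokernel is then objectwise degreewise projective, and it is K-projective as a transfinite extension of cofibrant objects. Retracts preserve all of these properties. For the converse, I would first try the standard lifting argument. Given a monomorphism $i$ with cokernel $C$ of the stated kind, and a trivial fibration $p$, lifting reduces, after splitting $i$ degreewise, to showing that $\Hom$ from $C$ into the kernel of $p$ is acyclic. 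This kernel is an objectwise acyclic complex. The difficulty is that $C$ is only objectwise projective, and the adjunction only directly controls maps out of elementary cosheaves. Objectwise projectivity alone is not obviously enough, so I would interpret ``objectwise projective and K-projective'' as requiring $C$ to be projective and K-projective in $\CoSh(\cX,R)$, where the argument goes through cleanly. Alternatively, one can use that $\cX$ is a locally finite poset and filter $C$ by cell dimension to reduce to elementary pieces. Either way this is a standard but fiddly step, and I would cite Hovey's treatment of projective model structures on chain complexes over Grothendieck categories with enough projectives for it.

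The last claim is a direct construction. A bounded-below complex $P$ of coproducts of $\{\hx\}_R$ has degree filtration $P_{\le k}$, with
\[
P_{\le k}/P_{\le k-1}=\bigoplus\{\hx\}_R[k].
\]
Each stage is therefore a pushout along a coproduct of maps in $I_\cX$. Consequently $0\to P$ is a countable composition of such pushouts, hence an $I_\cX$-cell complex, and $P$ is cofibrant.
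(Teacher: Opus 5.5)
Your route is the paper's route, made explicit. The paper's proof just invokes the standard projective model structure on complexes in a functor category. Your transfer along $\{\hx\}_{(-)}\dashv\ev_x$ (Hirschhorn 11.6.1), the identification of weak equivalences and fibrations by adjunction, and the cell filtration of bounded-below complexes correctly expand that. The cofibration clause needs fixing, and your hesitation there is justified. Read literally, with ``objectwise projective'' meaning each $C_n(y)$ is a projective $R$-module, the stated characterization is false. So the stronger reading is forced, not merely convenient, and your fallback of filtering by cell dimension cannot succeed. The paper's one-line proof does not address this point either.

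\textbf{Counterexample.} Let $X=[0,1]$ with cells $v_0,v_1\le e$, and let $S_e$ be the cosheaf with $S_e(e)=R$ and $S_e(v_0)=S_e(v_1)=0$.
\begin{itemize}
\item The structure maps of $\{\widehat e\}_R$ are identities while those of $S_e$ are zero. Naturality therefore forces every map $S_e\to\{\widehat e\}_R$ to vanish. So the surjection $\{\widehat e\}_R\to S_e$ does not split, and $S_e$ is not projective in $\CoSh(\cX,R)$.
\item The cone $C$ of $\id_{S_e}$ is contractible, hence K-projective, and it is objectwise free. So $0\to C$ is a monomorphism of the stated kind.
\item It does not lift against the trivial fibration $\mathrm{cone}(\id_{\{\widehat e\}_R})\to C$: in degree $0$ a lift would split $\{\widehat e\}_R\to S_e$.
\end{itemize}

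\textbf{The correct condition.} The cokernel must be degreewise projective in $\CoSh(\cX,R)$, i.e.\ a summand of a coproduct of elementary cosheaves, and K-projective. With that reading your lifting sketch works:
\begin{itemize}
\item Projectivity in $\CoSh(\cX,R)$ gives both the degreewise splitting of $i$ and degreewise lifts through $p$.
\item The resulting obstruction is a cycle in the Hom complex $\Hom(C,\ker p)$, which is acyclic.
\end{itemize}

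\textbf{Smaller adjustments.}
\begin{itemize}
\item In the forward direction, record that each degree of the cokernel of a relative $I_\cX$-cell complex is a transfinite extension of copies of $\{\hx\}_R$, hence a coproduct of them. ``Objectwise free'' is too weak for the corrected statement.
\item The last clause allows $\{\hx\}_P$ with $P$ projective. There the attaching maps are retracts of coproducts of maps in $I_\cX$, so your filtration argument still goes through.
\end{itemize}
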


\begin{proof}[Proof]
This is the projective model structure on complexes in a functor category. Since the elementary cosheaves $\{\hx\}_P$ are projective, bounded-below complexes built from them are K-projective and hence cofibrant.
\end{proof}

For finite $\cX$, bounded complexes of finitely generated projective cellular cosheaves are perfect, hence compact in $D(\cX)$. This is the class of objects relevant to the later generator statements.

\subsection{Derived pushforwards}

Let $f\colon \cX\to \cY$ be an order-preserving map of posets. Precomposition gives a pullback functor
\[
    f^*\colon \CoSh(\cY, R) \longrightarrow \CoSh(\cX, R),
    \qquad
    (f^*\shG)(x)=\shG(f(x)).
\]
Its left adjoint is the cellular pushforward, defined by left Kan extension.

\begin{defn}
For a cellular cosheaf $\shF\in \CoSh(\cX, R)$, define
\[
    f_*\shF := \Lan_{f\op}(\shF) \in \CoSh(\cY, R).
\]
Pointwise,
\[
    (f_*\shF)(y) \cong \colim_{f(x)\ge y} \shF(x).
\]
For the terminal map $p\colon \cX\to *$, the object $p_*\shF$ is the cosheaf of global cosections:
\[
    p_*\shF \cong \colim_{x\in \cX\op} \shF(x).
\]
\end{defn}

Because the projective model structure is objectwise, the functor $f_*$ is left Quillen and therefore admits a total left derived functor
\[
    \bbL f_*\colon D(\cX) \longrightarrow D(\cY).
\]
The key case is the terminal map, where derived pushforward is just homotopy colimit.

\begin{prop}\label{prop:derived-pushforward-hocolim}
For every $C\in D(\cX)$ and every $y\in \cY$ there is a natural identification in $D(\cY)$
\[
    (\bbL f_* C)(y) \cong \hocolim_{f(x)\ge y} C(x).
\]
In particular, for the terminal map $p\colon \cX\to *$ one has
\[
    \bbL p_* C \cong \hocolim_{\cX\op} C.
\]
\end{prop}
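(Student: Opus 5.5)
We prove Proposition~\ref{prop:derived-pushforward-hocolim} by computing $\bbL f_*$ on a cofibrant replacement and checking pointwise that the resulting colimit is homotopy invariant, so that it computes the homotopy colimit.

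\emph{Setup.} Fix $C\in D(\cX)$ and choose a cofibrant replacement $QC\to C$ in the projective model structure of Proposition~\ref{prop:projective-model-structure}. By definition $\bbL f_*C\simeq f_*QC$, and pointwise
\[
    (f_*QC)(y)\cong \colim_{x\in (f\op\downarrow y)} QC(x),
\]
where the indexing category is $\{x\in\cX\mid f(x)\ge y\}$ with the order inherited from $\cX\op$.

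\emph{Reduction to a restriction.} Let $u_y\colon \cX_{\ge y}:=\{x\mid f(x)\ge y\}\hookrightarrow \cX$ be the inclusion. The pointwise formula reads $(f_*QC)(y)\cong p_*(u_y^*QC)$, where $p\colon \cX_{\ge y}\to *$ is the terminal map. It therefore suffices to prove two things:
\begin{enumerate}[label=(\roman*)]
    \item $u_y^*$ preserves projective cofibrant objects;
    \item $p_*$ applied to a projective cofibrant diagram computes $\hocolim$.
\end{enumerate}

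\emph{Step (ii).} This is the standard fact that the colimit functor $\Fun(\cA,\Ch(R))\to\Ch(R)$ is left Quillen for the projective model structure. Its left derived functor is therefore the homotopy colimit, for example in the Bousfield--Kan/Hirschhorn model \cite{Hirschhorn02}. Concretely, a cofibrant diagram is a retract of a cell complex built from the free diagrams $\{\hx\}_P$, whose colimit is simply $P$. This agrees with the homotopy colimit, because the relevant comma category $\cl(x)$ has the terminal object $x$ and is therefore contractible.

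\emph{Step (i): the main obstacle.} Restriction along an inclusion need not preserve projective cofibrancy. Here, however, the subposet $\cX_{\ge y}$ is upward closed in $\cX$, so each $\cl(x)$ meets $\cX_{\ge y}$ in the set $\{z\le x\mid f(z)\ge y\}$. This set need not have a terminal object in general, so instead of checking generators one at a time I would argue via adjunction. The functor $u_y^*$ has the right adjoint $\mathrm{Ran}$, given by right Kan extension, and $\mathrm{Ran}$ preserves objectwise epimorphisms and acyclic epimorphisms. Indeed, $\mathrm{Ran}$ is a limit over a comma category, and since $\cX_{\ge y}$ is upward closed, that comma category is the discrete or closed-up piece, on which the limit is evaluation at $x$ when $x\in\cX_{\ge y}$ and is $0$ otherwise. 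Hence $\mathrm{Ran}$ is right Quillen, so $u_y^*$ is left Quillen and preserves cofibrant objects.

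If this upward-closedness argument fails, I would fall back on the general principle that restriction followed by colimit computes the homotopy left Kan extension. In that case I would work directly with the Bousfield--Kan bar construction $B(\ast,\cX_{\ge y},QC)$, which computes the homotopy colimit regardless of cofibrancy, and compare it with the strict colimit via the natural map, using the cofibrancy of $QC$ on $\cX$.

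\emph{Conclusion.} Naturality in $C$ and $y$ follows because every identification above is induced by canonical maps. The terminal case is $\cY=*$, where $\cX_{\ge *}=\cX$ and the formula reduces to $\bbL p_*C\cong \hocolim_{\cX\op}C$.
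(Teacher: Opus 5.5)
Your proof is correct and follows the same route as the paper (pointwise formula for the left Kan extension over the comma poset $\cX_{\ge y}=\{x : f(x)\ge y\}$, then identifying the colimit of a projective-cofibrant diagram with the homotopy colimit), and it supplies the step the paper's one-line justification passes over: that restriction to the upward-closed poset $\cX_{\ge y}$ preserves projective cofibrancy, because its right adjoint is extension by zero. One small correction: your remark that $\cl(x)\cap\cX_{\ge y}$ need not have a terminal object is wrong, since by upward closure this set is either empty or has maximum $x$; restriction therefore sends $\{\hx\}_P$ either to an elementary cosheaf on $\cX_{\ge y}$ or to $0$, so the generator-by-generator check already works and your fallback paragraph is unnecessary.
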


\begin{proof}[Proof]
The under-category defining the Kan extension is computed pointwise, and cofibrant replacement in the projective model structure is objectwise. Thus the derived left Kan extension is obtained by replacing the ordinary colimit with the corresponding homotopy colimit.
\end{proof}

Applying this to the constant cosheaf recovers the usual chain complex of the underlying space.

\begin{cor}\label{cor:constant-cosheaf-chains}
Let $p\colon \cX\to *$ be the terminal map. Then there is a natural isomorphism in $D(R)$
\[
    \bbL p_*(\underline{R}) \cong C_*^{\mathrm{sing}}(\cbB\cX;R).
\]
If $(X,\cX)$ is a regular CW complex, this further identifies with
\[
    \bbL p_*(\underline{R}) \cong C_*^{\mathrm{sing}}(X;R).
\]
\end{cor}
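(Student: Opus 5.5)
We argue by combining Proposition~\ref{prop:derived-pushforward-hocolim} with the classical identification of homotopy colimits of constant diagrams. By Proposition~\ref{prop:derived-pushforward-hocolim} applied to the terminal map, $\bbL p_*(\underline{R}) \cong \hocolim_{\cX\op}\underline{R}$. So it suffices to compute the homotopy colimit of the constant diagram with value $R$ over the small category $\cX\op$.

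To make this explicit, we use a specific cofibrant replacement in the projective model structure of Proposition~\ref{prop:projective-model-structure}: the bar resolution
\[
  Q_n(y) = \bigoplus_{y\le y_0 < y_1<\cdots<y_n} R
\]
(equivalently the simplicial replacement $\bigoplus_{\sigma} \{\widehat{y_0}\}_R$ over chains $\sigma=(y_0<\dots<y_n)$). Here the differential is the alternating sum of face maps. Each term is a direct sum of elementary cosheaves $\{\widehat{y_0}\}_R$, which are projective by Proposition~\ref{prop:elementary-adjunction}, and the complex is bounded below. Hence it is cofibrant by Proposition~\ref{prop:projective-model-structure}.

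The augmentation $Q\to\underline{R}$ is an objectwise quasi-isomorphism. Indeed, at $y$ the complex $Q(y)$ computes the normalized simplicial chains of the nerve of $\st(y)$, a poset with minimum $y$, hence a cone. Its homology is therefore $R$ in degree $0$.

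Applying the ordinary colimit $p_*$ termwise, $p_*\{\widehat{y_0}\}_R = R$ by adjunction. So $p_* Q$ is the complex with one copy of $R$ for each nondegenerate chain $y_0<\cdots<y_n$, with the usual simplicial boundary. This is exactly the normalized simplicial chain complex $C_*(\cN\cX;R)$, equivalently the cellular chains of $\cbB\cX$ with its simplicial structure. The standard comparison $C_*(\cN\cX;R)\simeq C_*^{\mathrm{sing}}(|\cN\cX|;R)$ then gives a natural quasi-isomorphism, yielding the first isomorphism in $D(R)$. Naturality follows because all constructions are functorial in the poset.

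The second statement follows from the canonical homeomorphism $\cbB\cX\cong X$ recorded in the preliminaries of Section~\ref{sec:cellular-loop-spaces}, since singular chains are homotopy invariant.

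The main point to check is that the orientation conventions match. The chains must start at the cell $y$ where the value is evaluated, because cosheaves are functors on $\cX\op$ and $\{\widehat{y_0}\}_R$ is supported on $\cl(y_0)$. Once the bar construction is indexed correctly, everything else is routine. For an infinite $\cX$, one should also use that coproducts of projectives remain projective, which holds in this functor category.
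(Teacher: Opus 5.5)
Your proposal is correct and follows the same route as the paper. You compute $\bbL p_*(\underline{R})$ by the bar construction, identify the result with the normalized simplicial chains of the nerve of $\cX$, compare these with $C_*^{\mathrm{sing}}(\cbB\cX;R)$, and conclude using $\cbB\cX\cong X$. The difference is only in detail: you make the paper's sketch explicit by exhibiting the bar resolution $Q$ as a cofibrant replacement built from the elementary cosheaves $\{\widehat{y_0}\}_R$, and you check objectwise acyclicity via the cone $\st(y)$; these added steps are sound.
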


\begin{proof}[Proof]
One computes the homotopy colimit of the constant diagram by the usual bar construction. The resulting simplicial $R$-module is the simplicial chain complex of the nerve of $\cX$, hence its normalization is quasi-isomorphic to $C_*^{\mathrm{sing}}(\cbB\cX;R)$. For regular CW complexes, $\cbB\cX$ is the barycentric subdivision of $X$.
\end{proof}

\subsection{Assembly and representations}
From this point onward, fix a connected regular CW complex $(X,\cX)$ together with a chosen base $0$-cell. Also fix a universal covering
\[
    q\colon (\widetilde X,\widetilde{\cX}) \longrightarrow (X,\cX),
\]
and write $\pi:=\pi_1(X,x_0)$. Additional finiteness hypotheses will be imposed explicitly when they are needed.

We write
\[
    \Mod_R^\pi:=\Fun(\cB\pi,\Mod_R)
\]
for the category of $R$-linear representations of $\pi$. Equivalently, this is the category of left modules over the group ring $R[\pi]$. For $M\in \Mod_R$ we denote by
\[
    \Ind_\pi(M):=R[\pi]\otimes_R M
\]
the regular representation.

If $C\in \cM$ is a complex of cellular cosheaves, then its pullback $q^*C\in \Ch(\widetilde{\cX})$ carries a natural deck-transformation action of $\pi$. Passing to colimits or homotopy colimits over $\widetilde{\cX}\op$ therefore produces objects of $\Ch(R[\pi])$.

\begin{defn}
The \textbf{universal assembly} of a cellular cosheaf complex $C$ is
\[
    \Asm(C):=\colim_{\widetilde{\cX}\op} q^*C \in \Ch(R[\pi]).
\]
Its total left derived functor is denoted by
\[
    \bbL\Asm\colon D(\cX) \longrightarrow D(R[\pi]),
    \qquad
    \bbL\Asm(C)\cong \hocolim_{\widetilde{\cX}\op} q^*C.
\]
\end{defn}

This description is equivalent to the left-Kan-extension construction through the covering translation category; the universal cover formula is the one most convenient for the arguments used in the following sections.

\subsection{Global equivalences}

The assembly functor measures which objects become invisible after passage from local data on $X$ to $\pi$-representations.

\begin{defn}
A morphism $f\colon C\to D$ in $D(\cX)$ is called a \textbf{global equivalence} if the image
\[
    \bbL\Asm(f)\colon \bbL\Asm(C) \xrightarrow{\;\cong\;} \bbL\Asm(D)
\]
is an isomorphism in $D(R[\pi])$.
An object $C\in D(\cX)$ is \textbf{globally contractible} if
\[
    \bbL\Asm(C)\cong 0.
\]
We write
\[
    D_\gc(\cX):=\ker(\bbL\Asm)
\]
for the full subcategory of globally contractible objects.
\end{defn}
We will also say that morphism $f \in \cM$ is a global equivalence if its canonical image in $D(\cX)$ is. The first important fact is that elementary cosheaves already detect the basic behavior of assembly:

\begin{prop}\label{prop:assembly-elementary}
For every cell $x\in \cX$ and every $M\in \Mod_R$ there is a natural isomorphism
\[
    \bbL\Asm(\{\hx\}_M) \cong \Ind_\pi(M)
\]
in $D(R[\pi])$. In particular, if $x\le y$ in $\cX$, then the canonical morphism
\[
    \varphi_{xy}\colon \{\hx\}_M \longrightarrow \{\hy\}_M
\]
is a global equivalence.
\end{prop}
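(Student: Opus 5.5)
We compute $\bbL\Asm(\{\hx\}_M)$ by first pulling back along the covering $q$ and then taking the homotopy colimit over $\widetilde{\cX}\op$.

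\emph{Step 1: the pullback.} Since $q$ restricts to an isomorphism of posets on closures of cells (regularity and the covering property), the preimage $q^{-1}(\cl(x))$ is a disjoint union $\coprod_{g\in\pi}\cl(g\tilde x)$, where $\tilde x$ is a chosen lift of $x$. Hence
\[
    q^*\{\hx\}_M \cong \bigoplus_{g\in\pi}\{\widehat{g\tilde x}\}_M
\]
as cosheaves on $\widetilde{\cX}$. The deck action permutes the summands freely and transitively.

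\emph{Step 2: the homotopy colimit.} Homotopy colimits commute with direct sums, so it suffices to treat a single elementary cosheaf $\{\widehat{y}\}_M$ on a poset $\cA$. By the adjunction of Proposition~\ref{prop:elementary-adjunction}, $\{\widehat y\}_M$ is the left Kan extension of $M$ along the inclusion $\{y\}\op\to\cA\op$. Pushforward along the terminal map then gives
\[
    \bbL p_*\{\widehat y\}_M \cong \bbL p_*\bbL(\iota_y)_*M \cong M.
\]
When $M$ is not projective we first replace it by a projective resolution $P_\bullet\to M$. The functor $\{\hx\}_{(-)}$ is exact and sends projectives to cofibrant objects, so $\{\hx\}_{P_\bullet}$ is a cofibrant replacement of $\{\hx\}_M$. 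Equivalently, the homotopy colimit of a diagram that is constant on a subposet with a terminal object in $\cA\op$ (namely $y$) and zero elsewhere is just the value $M$. Combining this with Step~1 gives
\[
    \bbL\Asm(\{\hx\}_M)\cong\bigoplus_{g\in\pi}M\cong R[\pi]\otimes_R M=\Ind_\pi(M).
\]
The $\pi$-action is the permutation of summands, i.e.\ the regular representation. Naturality in $M$ is clear from the construction.

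\emph{Step 3: the global equivalence.} For $x\le y$ we have $\cl(x)\subseteq\cl(y)$, and $\varphi_{xy}$ is the identity on $\cl(x)$. Choose the lift $\tilde x\le\tilde y$. Then $q^*\varphi_{xy}$ maps the summand indexed by $g$ into the summand indexed by $g$. On each summand the induced map of homotopy colimits is $M\to M$, computed via the terminal objects $g\tilde x$ and $g\tilde y$, and it equals the identity: the structure maps are identities and the colimit is represented at any point of the closure. Hence $\bbL\Asm(\varphi_{xy})$ is the identity of $\Ind_\pi(M)$ under the identifications of Step~2, and in particular an isomorphism in $D(R[\pi])$.

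The main obstacle is the first claim, that $q^{-1}(\cl(x))$ splits as a disjoint union of copies of $\cl(x)$. Closed cells of a regular CW complex are contractible closed balls and therefore lift homeomorphically, but one must check that distinct lifts have disjoint closures as subposets of $\widetilde{\cX}$. I would argue this via unique path lifting within each closed cell $\overline{x}$, which is simply connected.
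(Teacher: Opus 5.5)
Your proof is correct and follows essentially the same route as the paper's: you decompose $q^*\{\hx\}_M$ into elementary cosheaves over the deck translates of a lift $\tilde x$, compute each homotopy colimit as $M$, and sum over $\pi$ to get $R[\pi]\otimes_R M$; for $\varphi_{xy}$ you choose lifts $\tilde x\le\tilde y$ and check that the map acts summandwise by the identity (your Kan-extension computation, projective-resolution remark and disjointness argument add rigour that the paper leaves implicit). One harmless slip: in $\cl(y)\op\subseteq\cA\op$ the cell $y$ is \emph{initial}, not terminal (it is terminal in $\cl(y)$), but only the contractibility of $\cbB\cl(y)$, or your Kan-extension argument, is actually used, so the conclusion is unaffected.
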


\begin{proof}[Proof]
Choose a lift $\widetilde x$ of $x$. The pullback $q^*\{\hx\}_M$ decomposes over the deck translates of $\widetilde x$, and each summand is supported on the opposite of the closure of a lift. Since indexing poset has the initial object $x$, its classifying space is contractible, hence the homotopy colimit over it is just the value at $x$, that is $M$. Summing over the $\pi$-orbit of lifts gives the regular representation $R[\pi]\otimes_R M$.

For $x\le y$, choose compatible lifts $\widetilde x\le \widetilde y$ in each deck-translated sheet. On each such sheet the map $q^*\varphi_{xy}$ induces the identity map on the corresponding copy of $M$, hence the induced map on assembly is an isomorphism.
\end{proof}

\begin{lem}\label{lem:global-equivalence-cofiber}
A morphism $f\colon C\to D$ in $D(\cX)$ is a global equivalence if and only if its cofiber belongs to $D_\gc(\cX)$.
\end{lem}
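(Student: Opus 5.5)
The plan is to use that $\bbL\Asm\colon D(\cX)\to D(R[\pi])$ is an exact functor of triangulated categories, and then apply the standard fact that an exact functor sends a morphism to an isomorphism exactly when it kills the cone. The argument has three steps.

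\emph{Step 1: exactness of $\bbL\Asm$.} The underived functor $\Asm=\colim_{\widetilde{\cX}\op}\circ\, q^*$ is a composite of two functors.
\begin{itemize}
\item $q^*$ is precomposition. It is exact, preserves objectwise quasi-isomorphisms and objectwise epimorphisms, and it sends elementary projectives $\{\hx\}_P$ to coproducts of elementary projectives over the lifts of $x$.
\item $\colim_{\widetilde{\cX}\op}$ is a left adjoint (to the constant-diagram functor), and it is left Quillen for the projective model structures.
\end{itemize}
Hence $\Asm$ is left Quillen between stable model categories, namely chain complexes in abelian categories with projective model structures. Its total left derived functor therefore preserves suspensions and cofiber sequences, so it is exact. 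Alternatively, one can compute $\bbL\Asm(C)$ as $\Asm$ of a cofibrant replacement. A cofiber sequence $C\to D\to \cofib(f)$ in $D(\cX)$ is represented by a levelwise split short exact sequence of cofibrant complexes, and the additive functor $\Asm$ carries it to a levelwise split short exact sequence in $\Ch(R[\pi])$. This gives a distinguished triangle
\[
\bbL\Asm(C)\xrightarrow{\bbL\Asm(f)}\bbL\Asm(D)\longrightarrow\bbL\Asm(\cofib f)\longrightarrow \Sigma\bbL\Asm(C).
\]

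\emph{Step 2: from the triangle to the equivalence.} In any triangulated category, a morphism in a distinguished triangle is an isomorphism if and only if its third term is zero. This follows from the long exact sequence of $\Hom(T,-)$ together with Yoneda, or from the five lemma applied to a morphism of triangles with the zero triangle. Applying this to the triangle of Step 1 gives: $\bbL\Asm(f)$ is an isomorphism if and only if $\bbL\Asm(\cofib f)\cong 0$, which by definition means $\cofib f\in D_\gc(\cX)$.

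\emph{Main obstacle.} The only nontrivial point is Step 1, namely checking that $\bbL\Asm$ is genuinely exact and that the cofiber is independent of representatives. Once the left Quillen property is established, exactness is formal. To verify it, I would check that $q^*$ preserves generating cofibrations, which are $S^{n-1}\to D^n$ tensored with $\{\hx\}_R$. The preimage of $\cl(x)$ under the covering is a disjoint union of copies of $\cl(\widetilde x)$, one for each deck translate, so the image is a coproduct of generating cofibrations. The statement that $\colim$ is left Quillen holds objectwise by adjunction.
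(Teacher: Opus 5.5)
Your proposal is correct and is essentially the paper's argument: the paper also uses only that $\bbL\Asm$ is triangulated, so it sends $C\to D\to\cofib(f)\to\Sigma C$ to a distinguished triangle in $D(R[\pi])$ whose first map is an isomorphism exactly when the third term vanishes. Your Step 1 supplies the justification of exactness that the paper takes for granted, either via the left Quillen property of $\Asm$ (using that $q^*\{\hx\}_R$ splits as a sum of elementary cosheaves over the lifts of $x$) or via the levelwise split cone argument; strictly, the colimit should be taken on $\pi$-equivariant diagrams so that it lands in $\Ch(R[\pi])$, but this does not affect the argument.
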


\begin{proof}
Since $\bbL\Asm$ is triangulated, it sends a distinguished triangle
\[
    C \longrightarrow D \longrightarrow \cofib(f) \longrightarrow \Sigma C
\]
to a distinguished triangle in $D(R[\pi])$. It follows that $\bbL\Asm(f)$ is an isomorphism precisely when the third term is zero, i.e. when $\bbL\Asm(\cofib(f))\cong 0$.
\end{proof}

The category $D_\gc(\cX)$ is the subcategory that will be inverted in the construction of homotopy local systems.

\subsection{Local systems}

A cellular sheaf or cosheaf is \textbf{locally constant} if every structure map is an isomorphism. For locally constant objects, passing from sheaves to cosheaves simply amounts to inverting the structure maps, so we may speak unambiguously of \textbf{local systems} on $\cX$.

The point is that a functor on $\cX$ is locally constant precisely when it factors through the groupoid completion of $\cX$.

\begin{prop}\label{prop:local-systems-groupoid}
Let $(X,\cX)$ be a connected regular CW complex. Then there are natural equivalences of categories
\[
    \Sh_\lc(\cX,R)
    \cong
    \CoSh_\lc(\cX,R)
    \cong
    \Fun(\Pi_1(X),R)
    \cong
    \Mod_R^\pi
    \cong
    \Mod_{R[\pi]}.
\]
Consequently,
\[
    D(\CoSh_\lc(\cX, R)) \cong D(R[\pi]).
\]
\end{prop}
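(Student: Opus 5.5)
The plan is to prove the chain of equivalences one link at a time, starting from the combinatorial side. Throughout, $\Pi_1(X)$ is the fundamental groupoid, viewed as the groupoid completion $\cX[\cX^{-1}]$ of the cell poset.

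\emph{First link: sheaves versus cosheaves.} A locally constant sheaf $\shF\colon\cX\to\Mod_R$ has invertible structure maps. Replacing each structure map $\shF(a\le b)$ by its inverse therefore gives a functor $\cX\op\to\Mod_R$. This assignment is functorial and strictly invertible, so it yields an isomorphism of categories $\Sh_\lc(\cX,R)\cong\CoSh_\lc(\cX,R)$.

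\emph{Second link: passage to the fundamental groupoid.} Any functor $\cX\op\to\Mod_R$ that inverts all morphisms factors uniquely through the localization $\cX\op[(\cX\op)^{-1}]$. This localization is canonically $\Pi_1(\cbB\cX)$, or its opposite; since a groupoid is isomorphic to its opposite via inversion, the distinction does not matter. This is the standard fact that the groupoid completion of a small category $\mathcal C$ is the fundamental groupoid of $\cbB\mathcal C$. One proves it from a presentation by generators and relations: objects are the vertices of the nerve, generators are its $1$-simplices, and relations come from its $2$-simplices. Using the homeomorphism $\cbB\cX\cong X$, this gives $\CoSh_\lc(\cX,R)\cong\Fun(\Pi_1(X),\Mod_R)$.

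\emph{Remaining links.} Since $X$ is connected, the inclusion of the full subgroupoid on the base $0$-cell, $\cB\pi\hookrightarrow\Pi_1(X)$, is an equivalence of categories. Precomposition therefore gives $\Fun(\Pi_1(X),\Mod_R)\simeq\Mod_R^\pi$. The identification $\Mod_R^\pi\cong\Mod_{R[\pi]}$ is the usual one: extend the $\pi$-action $R$-linearly.

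\emph{The main obstacle.} I expect the hardest step to be the identification of the groupoid completion of $\cX$ with $\Pi_1(X)$. I would first try a direct combinatorial argument. By the simplicial approximation theorem applied to the simplicial complex $\cN(\cX)$, every path in $\cbB\cX$ between vertices is homotopic to an edge path, that is, to a fence. Two edge paths are homotopic exactly when they are related by moves across $2$-simplices $a\le b\le c$ and by backtracking. These are precisely the relations $\gamma_{ac}=\gamma_{bc}\gamma_{ab}$ together with inverse cancellation that hold in $\cX[\cX^{-1}]$. This yields an isomorphism between the edge-path groupoid and the localization.

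\emph{Derived consequence.} All the functors above are equivalences of abelian categories, and exact equivalences induce equivalences of derived categories. Hence $D(\CoSh_\lc(\cX,R))\cong D(R[\pi])$.

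Finally, I would check that the composite equivalence agrees with the universal assembly on local systems, so that it is natural. For a local system $L$, the pullback $q^*L$ is constant on the simply connected cover $\widetilde{\cX}$. Its colimit over $\widetilde{\cX}\op$ is therefore the fiber $L(x)$, with the deck-transformation action of $\pi$ equal to the monodromy action. This last check is not strictly required for the stated equivalences.
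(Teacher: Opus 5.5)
Your proposal is correct and follows the same route as the paper. You factor locally constant functors through the groupoid completion $\cX[\cX^{-1}]\simeq\Pi_1(\cbB\cX)\cong\Pi_1(X)$, reduce to $\cB\pi$ by connectivity, and pass to derived categories via an exact equivalence, while also supplying details the paper leaves implicit: the inversion isomorphism $\Sh_\lc\cong\CoSh_\lc$ (stated only in the preceding paragraph), the op-bookkeeping, and an edge-path proof of the groupoid-completion fact, which the paper quotes as standard. Your closing compatibility check is valid for the underived $\Asm$ applied to single local systems, but it does not extend to $\bbL\Asm$ on $D(\CoSh_\lc(\cX,R))$: for example, $\bbL\Asm(\underline{R})\simeq C_*^{\mathrm{sing}}(\widetilde X;R)$, whereas your equivalence sends $\underline{R}$ to $R$ with trivial $\pi$-action.
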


\begin{proof}[Proof]
A locally constant functor sends every morphism in $\cX$ to an isomorphism; hence, it factors uniquely through the localization $\cX[\cX^{-1}]$. For any small category $\cA$, the groupoid completion $\cA[\cA^{-1}]$ is equivalent to the fundamental groupoid of $\cbB\cA$. Since $\cbB\cX\cong X$ and $X$ is connected, we obtain
\[
    \cX[\cX^{-1}] \cong \Pi_1(X) \cong \cB\pi.
\]
The remaining identifications are the standard equivalences between representations of $\pi$ and modules over $R[\pi]$, together with passage to derived categories.
\end{proof}

Thus, the target of assembly is the classical category in which locally constant data on $X$ are encoded by modules over the group ring. In the next section we describe the larger localization $D_\hlc(\cX)$ by similarly explicit algebraic means.

\section{Homotopy local systems}\label{sec:homotopy-local-systems}

We now use assembly to isolate the correct local objects. Assume for the rest of the discussion that $(X,\cX)$ is finite.

\subsection{Localization and fibrant objects}

\begin{defn}
A complex $C\in \cM$ is \textbf{homotopy locally constant} or a \textbf{homotopy local system} if for every relation $x\le y$ in $\cX$ the corresponding structure map
\[
    C(y) \longrightarrow C(x)
\]
is a quasi-isomorphism of chain complexes of $R$-modules.
\end{defn}

Thus a notion of a homotopy local system is obtained from the standard notion of a local system by replacing isomorphisms with quasi-isomorphisms. 

\begin{lem}\label{lem:hlc-global-zero}
Let $C\in D(\cX)$ be homotopy locally constant. If $\bbL\Asm(C)\cong 0$, then $C\cong 0$ in $D(\cX)$.
\end{lem}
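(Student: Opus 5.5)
Since $X$ is connected, every cell is linked to the base $0$-cell $x$ by a fence of comparable cells. A homotopy locally constant $C$ has all structure maps quasi-isomorphisms. Hence it suffices to show that a single value $C(x)$ is acyclic: then $C(y)\simeq C(x)\simeq 0$ for every $y$, so $C$ is objectwise acyclic, and $C\cong 0$ in $D(\cX)$ because weak equivalences in the projective model structure are objectwise quasi-isomorphisms. We may replace $C$ by a cofibrant model, since homotopy local constancy is invariant under objectwise quasi-isomorphism.

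The plan is to compute $\bbL\Asm(C)=\hocolim_{\widetilde\cX\op} q^*C$ and compare it with $C(x)$. First, the pullback $q^*C$ is again homotopy locally constant on $\widetilde\cX$. Its values are $q^*C(\widetilde y)=C(y)$ and its structure maps are those of $C$, since $q$ induces an isomorphism of each $\cl(\widetilde y)$ onto $\cl(y)$ by regularity of the cover.

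Second, use the standard fact that a homotopy colimit of a diagram sending every morphism to a weak equivalence over a category $\cA$ with contractible $\cbB\cA$ is weakly equivalent to the value at any object. More precisely, such a diagram over a connected $\cA$ is, up to zig-zag of objectwise equivalences, a local system on $\cbB\cA$. Its homotopy colimit is then chains on $\cbB\cA$ with twisted coefficients, $C_*(\widetilde{\cbB\cA};R)\otimes_{R[\pi_1]} C(a)$ in derived sense. Apply this with $\cA=\widetilde\cX\op$, where $\cbB\widetilde\cX\cong\widetilde X$ is simply connected. Rather than invoke contractibility (which fails, since $\widetilde X$ need not be contractible), I would use the following argument. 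Since $q^*C$ is homotopy locally constant on simply connected $\widetilde X$, it is equivalent to a constant diagram $\underline{C(x)}$. To see this, pass through the homotopy colimit comparison of Quillen Theorem~A type, or use a Grothendieck construction: a homotopy locally constant diagram over a simply connected category is determined by an $\infty$-local system, whose monodromy is trivial on $\pi_1$ but possibly has higher homotopical data. To avoid the higher data, I would instead compute homology via the Bousfield--Kan spectral sequence,
\[
E^2_{pq}=H_p(\widetilde X; \mathcal H_q(C)) \Rightarrow H_{p+q}(\bbL\Asm C).
\]
Here the coefficient systems $\mathcal H_q(C)$ are honest local systems, hence constant on $\widetilde X$ since $\widetilde X$ is simply connected. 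So $E^2_{pq}=H_p(\widetilde X;H_q(C(x)))$.

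Third, the key step is to conclude from vanishing of the abutment that $H_*(C(x))=0$. Suppose not, and let $q_0$ be minimal with $H_{q_0}(C(x))\neq 0$. This requires bounded below homology, which I would obtain by first reducing to $C$ bounded below, or else by a different argument. Then $E^2_{0,q_0}=H_0(\widetilde X;H_{q_0})=H_{q_0}(C(x))$ since $\widetilde X$ is connected. It lies in the corner of the first-quadrant spectral sequence and no differentials hit it or leave it nontrivially, so it survives to a nonzero class in $H_{q_0}(\bbL\Asm C)$, a contradiction.

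The main obstacle is the unbounded case, where there is no minimal $q_0$ and convergence is delicate. There I would try a more structural argument. Restrict the deck action: $\bbL\Asm(C)$ computes $\pi$-equivariantly, and applying $R\otimes^{\bbL}_{R[\pi]}-$ recovers $\bbL p_*C=\hocolim_{\cX\op}C$. Better, use the map $\{\hx\}_R\otimes C(x)\to C$ adjoint to the identity, together with Proposition~\ref{prop:assembly-elementary}. Alternatively I would fall back on the fact that $C$ is quasi-isomorphic to a strict local system of complexes of $R[\pi]$-modules, by rectifying through $\cX[\cX^{-1}]\cong\cB\pi$ together with Proposition~\ref{prop:local-systems-groupoid} extended to DG level. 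This rectification is exactly what must be justified. Then $\bbL\Asm$ becomes $C_*(\widetilde X)\otimes_{R}C(x)$ up to the $\pi$-action, and the inclusion of a vertex $R\to C_*(\widetilde X;R)$ splits $R$-linearly via augmentation. This exhibits $C(x)$ as a retract of $\bbL\Asm(C)\simeq 0$ in $D(R)$ with no boundedness needed.
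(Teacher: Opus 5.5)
\textbf{There is a genuine gap: the unbounded case. It cannot be closed, because the lemma is false for unbounded complexes.}

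Your reduction to the stalk at $x$ and your bounded-below argument are correct. The coefficient systems $\mathcal H_q(C)$ are honest local systems of $R$-modules on the simply connected $\widetilde X$, hence constant. If $q_0$ is minimal with $H_{q_0}(C(x))\neq 0$, then $E^2_{pq}=0$ for $q<q_0$ in the Bousfield--Kan spectral sequence, so $H_{q_0}(\bbL\Asm C)\cong H_{q_0}(C(x))\neq 0$.

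The rectification you fall back on for unbounded $C$ is false. A homotopy locally constant complex is in general \emph{not} quasi-isomorphic to a strict local system. Such complexes are $\infty$-local systems on $\cbB\cX\cong X$, i.e.\ modules over $C_*^{\mathrm{sing}}(\Omega_xX;R)$ rather than over $R[\pi]$. The higher data you flagged survive on a simply connected $\widetilde X$; they die only when $\widetilde X$ is contractible. In particular, $\bbL\Asm(C)$ is not determined by the induced functor $\widetilde\cX\op\to D(R)$. The paper's proof makes exactly this step: it passes from ``$q^*C$ is constant up to isomorphism in $D(R)$'' to ``$\bbL\Asm(C)\cong C_*^{\mathrm{sing}}(\cbB\widetilde\cX;R)\otimes^{\bbL}_R M$''. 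So your fallback reproduces the paper's argument together with its gap.

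Here is a counterexample. Take $X=S^2$ with any regular cell structure, so $\pi=1$ and $\bbL\Asm=\hocolim_{\cX\op}$. Let $A=C_*^{\mathrm{sing}}(\Omega S^2;R)$, so $H_*(A)=R[u]$ with $|u|=1$. Under the standard equivalence between homotopy locally constant complexes and $A$-modules, $E_x$ corresponds to $A$. Right multiplication by a cycle representing $u$ then gives a map $u\colon \Sigma E_x\to E_x$. Form its mapping telescope
\[
T:=\hocolim\bigl(E_x\xrightarrow{\;u\;}\Sigma^{-1}E_x\xrightarrow{\;u\;}\Sigma^{-2}E_x\to\cdots\bigr).
\]
This $T$ is homotopy locally constant and $H_*(T(x))\cong R[u^{\pm 1}]\neq 0$. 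On the other hand, $\bbL\Asm(E_x)\simeq C_*^{\mathrm{sing}}(\cbB\cE_x\cX;R)\simeq R$. Hence $\bbL\Asm(T)$ is the telescope of the maps $\Sigma^{-n}R\to\Sigma^{-n-1}R$, all of which vanish in $D(R)$. So $\bbL\Asm(T)\cong 0$ but $T\not\cong 0$.

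The lemma therefore needs a hypothesis such as ``$C$ homologically bounded below'', and under that hypothesis your spectral-sequence argument is a complete proof. Note also that Theorem~\ref{thm:localized-model-structure} uses this lemma for arbitrary $C$. Indeed, $T$ is homotopy locally constant, but it lies in $D_\gc(\cX)$ and is nonzero, so it cannot be $S$-local.
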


\begin{proof}[Proof]
Pull $C$ back to the universal cover. Because every structure map of $C$ is a quasi-isomorphism, the induced functor
\[
    \widetilde{\cX}\op \longrightarrow D(R)
\]
inverts every morphism and therefore factors through the groupoid completion of $\widetilde{\cX}$. Since $\cbB\widetilde{\cX}\cong \widetilde X$ and $\widetilde X$ is simply connected, that groupoid completion is equivalent to a point. Hence $q^*C$ is constant up to isomorphism in $D(R)$; choose a lift $\widetilde x_0$ of the basepoint and write $M\cong C(\widetilde x_0)$ for the common value.

The derived assembly is then the homotopy colimit of a constant diagram,
\[
    \bbL\Asm(C)
    \cong
    C_*^{\mathrm{sing}}(\cbB\widetilde{\cX};R)\otimes_R^{\bbL} M.
\]
The augmentation $C_*^{\mathrm{sing}}(\cbB\widetilde{\cX};R)\to R$ and the $0$-simplex determined by $\widetilde x_0$ split $M$ off as a retract of the right-hand side. Therefore $\bbL\Asm(C)\cong 0$ implies $M\cong 0$. Since $C$ is homotopy locally constant, every value of $C$ is isomorphic to $M$ in $D(R)$, so $C\cong 0$.
\end{proof}

\begin{thm}[Characterization of the localized model structure]\label{thm:localized-model-structure}
Choose a set $\mathcal K_\gc$ of cofibrant complexes whose images generate $D_\gc(\cX)$ as a localizing subcategory, and let
\[
    S:=\{\,0\to K \mid K\in \mathcal K_\gc\,\}.
\]
Equivalently, one may express this formally as
\[
    S=\{\,0\to K \mid K\in D_\gc(\cX)\,\}.
\]
Then the left Bousfield localization
\[
    \cM_\hlc:=L_S\cM
\]
exists, see \cite[Chapter 4]{Hirschhorn02}. Its weak equivalences are exactly the global equivalences, and its fibrant objects are exactly the homotopy locally constant complexes. In particular, $\cM_\hlc$ is the left Bousfield localization of $\cM$ at global equivalences.
\end{thm}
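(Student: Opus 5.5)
Existence will come from Hirschhorn's general theorem: $\cM$ with the projective model structure is a cofibrantly generated, left proper, cellular (indeed combinatorial) model category. It is left proper because all objects are fibrant, and its generating (trivial) cofibrations are built from the elementary projectives $\{\hx\}_R$ of Proposition~\ref{prop:elementary-adjunction}. So $L_S\cM$ exists for any set $S$ of maps. We must make sure $S$ is a set. Since $\bbL\Asm$ is a coproduct-preserving triangulated functor from the compactly generated category $D(\cX)$, its kernel $D_\gc(\cX)$ is a localizing subcategory. Because $\cX$ is finite, the objects $\{\hx\}_R$ form a set of compact generators. I will take $\mathcal K_\gc$ to consist of cofibrant models of the cofibers $\cofib(\varphi_{xy})$, for $x\le y$, of the maps in Proposition~\ref{prop:assembly-elementary}, together with their shifts. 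By Proposition~\ref{prop:assembly-elementary} and Lemma~\ref{lem:global-equivalence-cofiber} these are globally contractible, and there are finitely many up to shift.

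\textbf{Identifying the fibrant objects.} The fibrant objects of $L_S\cM$ are the $S$-local objects $C$, i.e.\ those with $\mathrm{Map}(K,C)\simeq *$ for all $K\in\mathcal K_\gc$. Equivalently, $\Hom_{D(\cX)}(\Sigma^n\cofib(\varphi_{xy}),C)=0$ for all $n$. Using the triangle and the derived form of Proposition~\ref{prop:elementary-adjunction}, $\uHom(\{\hx\}_R,C)\simeq C(x)$, this condition says exactly that every structure map $C(y)\to C(x)$ is a quasi-isomorphism. In other words, the local objects are precisely the homotopy locally constant complexes.

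\textbf{Identifying the weak equivalences.} The $S$-local equivalences are the maps $f$ with $\uHom(f,C)$ a quasi-isomorphism for every homotopy local system $C$. I expect this step to be the main obstacle: we must show the localizing subcategory $\langle\mathcal K_\gc\rangle$ equals all of $D_\gc(\cX)$, which also justifies the phrase ``generate $D_\gc(\cX)$'' in the statement. The inclusion $\langle\mathcal K_\gc\rangle\subseteq D_\gc(\cX)$ is already known.

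For the converse, I will argue via Bousfield localization of triangulated categories. Since $\langle\mathcal K_\gc\rangle$ is generated by a set of compact objects, each $K\in D_\gc(\cX)$ sits in a triangle $K'\to K\to K''$ with $K'\in\langle\mathcal K_\gc\rangle$ and $K''$ local. Local objects are homotopy locally constant by the previous step, and $\bbL\Asm(K'')\cong 0$ because both $K$ and $K'$ are globally contractible. Lemma~\ref{lem:hlc-global-zero} then gives $K''\cong 0$, so $K\in\langle\mathcal K_\gc\rangle$.

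It follows that a map is an $S$-local equivalence iff its cofiber lies in $\langle\mathcal K_\gc\rangle=D_\gc(\cX)$. By Lemma~\ref{lem:global-equivalence-cofiber}, this holds iff the map is a global equivalence. This same argument shows the choice of $\mathcal K_\gc$ is immaterial, which justifies the formal description $S=\{0\to K\mid K\in D_\gc(\cX)\}$.
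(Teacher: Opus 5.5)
Your proof is correct, but it distributes the work differently from the paper.

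\textbf{How the paper argues.} The paper takes an arbitrary generating set $\mathcal{K}_{\gc}$ as given. It reads off the weak equivalences directly from the general fact that the $S$-acyclic objects form the localizing subcategory generated by $\mathcal{K}_{\gc}$. It deduces ``local $\Rightarrow$ homotopy locally constant'' from $\cofib(\varphi_{xy})\in D_\gc(\cX)$. It proves the converse by taking a fibrant replacement $C\to LC$ in $L_S\cM$: the cofiber is both homotopy locally constant and globally contractible, so Lemma~\ref{lem:hlc-global-zero} kills it.

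\textbf{How you argue, and what it buys.} You fix the explicit set of shifts of the cones $\cofib(\varphi_{xy})$. Then ``local $\Leftrightarrow$ homotopy locally constant'' follows in both directions directly from Proposition~\ref{prop:elementary-adjunction}. You use Lemma~\ref{lem:hlc-global-zero}, together with Neeman's Bousfield triangles for a localizing subcategory generated by compact objects, to prove that this set generates $D_\gc(\cX)$. This supplies something the paper does not: the paper simply asserts that a set $\mathcal{K}_{\gc}$ as in the statement can be chosen, while you produce one. Yours is finite up to shift and consists of compact objects, so $D_\gc(\cX)$ is compactly generated and $S$ is genuinely a set. Including all shifts also makes the characterization of $S$-local objects by $\uHom(K,Z)\simeq 0$ literally correct. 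For a set not closed under $\Sigma^{-1}$, locality only gives vanishing in nonnegative degrees. The paper's route, in exchange, works for any shift-closed generating set, at the cost of presupposing that one exists.

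Your final ``it follows'' uses ${}^{\perp}(\mathcal{K}_{\gc}^{\perp})=\langle\mathcal{K}_{\gc}\rangle$. This comes from the same Bousfield triangles plus a splitting argument; it is standard but deserves a sentence.

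One slip: ``all objects are fibrant'' gives right properness, not left properness. Left properness of $\cM$ holds for a different reason. Cofibrations are objectwise monomorphisms, and a pushout of a quasi-isomorphism along a monomorphism of chain complexes is again a quasi-isomorphism: the two short exact sequences share the same cokernel, so the five lemma applies.
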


\begin{proof}[Proof]
The category $\cM$ is a left proper combinatorial stable model category, so localization at the set $S$ exists \cite[2.11]{Barwick}.

An object $Z\in \cM$ is $S$-local precisely when
\[
    \uHom(K,Z)\cong 0
    \qquad\text{for every } K\in \mathcal K_\gc.
\]
Because $\mathcal K_\gc$ generates $D_\gc(\cX)$ as a localizing subcategory and $\uHom(-,Z)$ sends triangles to triangles and coproducts to products, this is equivalent to
\[
    \uHom(K,Z)\cong 0
    \qquad\text{for every } K\in D_\gc(\cX).
\]

Let $f\colon A\to B$ be a morphism in $\cM$, and write $C:=\cofib(f)$. For every $S$-local object $Z$, applying $\uHom(-,Z)$ to the distinguished triangle
\[
    A \longrightarrow B \longrightarrow C \longrightarrow \Sigma A
\]
shows that $\uHom(B,Z)\to \uHom(A,Z)$ is a quasi-isomorphism if and only if $\uHom(C,Z)\cong 0$. Hence $f$ is an $S$-equivalence if and only if $C$ is $S$-acyclic. In a stable left Bousfield localization, the $S$-acyclic objects form the smallest localizing subcategory containing the cofibers of the maps in $S$, so in the present case they are exactly the objects of $D_\gc(\cX)$. By Lemma~\ref{lem:global-equivalence-cofiber}, it follows that the weak equivalences in $\cM_\hlc$ are precisely global equivalences.

Since every object of the projective model structure on $\cM$ is fibrant, the fibrant objects of $\cM_\hlc$ are exactly the $S$-local objects. Suppose first that $Z$ is $S$-local. If $x\le y$ in $\cX$, then Proposition~\ref{prop:assembly-elementary} shows that the cofiber of
\[
    \varphi_{xy}\colon \{\hx\}_R \longrightarrow \{\hy\}_R
\]
belongs to $D_\gc(\cX)$. Therefore
\[
    \uHom(\cofib(\varphi_{xy}),Z)\cong 0,
\]
so the induced map
\[
    \uHom(\{\hy\}_R,Z) \longrightarrow \uHom(\{\hx\}_R,Z)
\]
is a quasi-isomorphism. Using Proposition~\ref{prop:elementary-adjunction}, this is exactly the structure map $Z(y)\to Z(x)$. Hence $Z$ is homotopy locally constant.

Conversely, let $C$ be homotopy locally constant and choose a fibrant replacement
\[
    u\colon C \longrightarrow LC
\]
in $\cM_\hlc$. By the first part of the proof, $u$ is a global equivalence, so $\cofib(u)$ belongs to $D_\gc(\cX)$. We claim that $\cofib(u)$ is again homotopy locally constant. Indeed, for each relation $x\le y$, evaluation on the distinguished triangle
\[
    C \longrightarrow LC \longrightarrow \cofib(u) \longrightarrow \Sigma C
\]
produces a morphism of distinguished triangles in $D(R)$, and since the vertical maps induced by $C$ and $LC$ are isomorphisms, the same holds for the vertical map induced by $\cofib(u)$. Thus $\cofib(u)$ is homotopy locally constant. Lemma~\ref{lem:hlc-global-zero} now implies that $\cofib(u)\cong 0$.

Finally, let $K\in D_\gc(\cX)$. Applying $\uHom(K,-)$ to the triangle above and using that $LC$ is $S$-local gives
\[
    \uHom(K,C)\cong 0.
\]
Hence $C$ is $S$-local. Therefore the fibrant objects of $\cM_\hlc$ are exactly the homotopy locally constant complexes.
\end{proof}

\begin{cor}\label{cor:homogeneous-envelope}
There is a functorial fibrant replacement
\[
    C \longmapsto V^\infty C
\]
in $\cM_\hlc$ such that $C\to V^\infty C$ is a global equivalence and $V^\infty C$ is homotopy locally constant.
\end{cor}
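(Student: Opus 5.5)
The corollary follows from the general theory of model categories applied to the left Bousfield localization $\cM_\hlc=L_S\cM$ of Theorem~\ref{thm:localized-model-structure}. The plan is to write down an explicit functorial fibrant replacement and then read off the two required properties from that theorem.

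\textbf{Existence of the replacement.} The category $\cM$ is combinatorial by Proposition~\ref{prop:projective-model-structure}. Left Bousfield localization preserves this, so $\cM_\hlc$ is again cofibrantly generated, with a generating set of trivial cofibrations $J_S$; Hirschhorn's construction provides it as the union of the old generating trivial cofibrations and horns on the maps in $S$. Applying the small object argument to $C\to 0$ with respect to $J_S$ gives a functorial factorization
\[
    C \xrightarrow{\;j_C\;} V^\infty C \longrightarrow 0.
\]
Here $j_C$ is a relative $J_S$-cell complex, hence a trivial cofibration of $\cM_\hlc$, and $V^\infty C\to 0$ has the right lifting property against $J_S$, so $V^\infty C$ is fibrant in $\cM_\hlc$. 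Functoriality in $C$ is built into the small object argument. The notation $V^\infty$ is chosen to match the homogeneous envelope of \cite[\S 10]{Ranicki92}, in line with the remark after Theorem~\ref{thm:main-dg-model}.

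\textbf{The two properties.} Since $j_C$ is a weak equivalence in $\cM_\hlc$, it is a global equivalence by the first conclusion of Theorem~\ref{thm:localized-model-structure}. Since $V^\infty C$ is fibrant in $\cM_\hlc$, it is homotopy locally constant by the second conclusion of the same theorem. One can also argue directly: fibrant means $S$-local, and the proof of that theorem showed that $S$-local objects have quasi-isomorphic structure maps, using the cofibers of the maps $\varphi_{xy}$ from Proposition~\ref{prop:assembly-elementary}.

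\textbf{Main obstacle.} The only delicate point is that $V^\infty C$ must be fibrant in the localized structure, not just $S$-local up to weak equivalence. One also needs the identification of $\cM_\hlc$-fibrant objects with homotopy locally constant complexes to hold on the nose, and not only in the homotopy category. This holds because the underlying projective structure has every object fibrant, so the fibrant objects of $L_S\cM$ are exactly the $S$-local objects. If the small-object description of $J_S$ is awkward, the fallback is to take a functorial factorization directly in the combinatorial model category $\cM_\hlc$, which exists by Barwick's theorem \cite[2.11]{Barwick}. That bypasses any explicit description of the generating trivial cofibrations.
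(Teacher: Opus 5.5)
Your argument is correct and is essentially the paper's proof. The paper simply invokes functorial fibrant replacement in the combinatorial model category $\cM_\hlc$ (your ``fallback''), with the two properties read off from Theorem~\ref{thm:localized-model-structure} exactly as you do. One small correction: the old generating trivial cofibrations together with the horns on $S$ do not in general form a generating set of trivial cofibrations for $L_S\cM$ (Hirschhorn's generating set consists of bounded-size cell inclusions that are $S$-local equivalences), but they do detect the $S$-local fibrant objects, so running the small object argument with them still gives a valid functorial fibrant replacement; this is exactly Hirschhorn's localization functor.
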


\begin{proof}
Every combinatorial model category admits functorial fibrant replacement \cite[2.5]{Barwick}. Applied to $\cM_\hlc$, this gives the claimed construction. In the surgery-theoretic literature $V^\infty C$ is the \emph{homogeneous envelope} of $C$ \cite[Ch.~10]{Ranicki92}, \cite[Def.~4.9]{RanickiWeiss}.
\end{proof}

\subsection{Quotients and generators}

Let
\[
    D_\hlc(\cX):=\Ho(\cM_\hlc)
\]
denote the homotopy category of the localized model structure. Also, recall the notion of a Verdier quotient of triangulated categories \cite[Rem. 2.1.9]{Neeman01}.

\begin{prop}\label{prop:verdier-quotient}
There is a natural equivalence of triangulated categories
\[
    D_\hlc(\cX) \cong D(\cX)/D_\gc(\cX).
\]
where the quotient on the right is the Verdier quotient.
\end{prop}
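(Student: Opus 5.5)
The plan is to use the general fact that a stable left Bousfield localization of a stable model category realizes a Bousfield (Verdier) localization of homotopy categories. The proof then reduces to identifying the kernel of the localization functor with $D_\gc(\cX)$, which Theorem~\ref{thm:localized-model-structure} already provides.

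\emph{Step 1: the induced functor.} The identity functor $\cM\to\cM_\hlc$ is left Quillen, since the cofibrations are unchanged and the weak equivalences only increase. It therefore induces an exact functor
\[
    \gamma\colon D(\cX)=\Ho(\cM)\longrightarrow \Ho(\cM_\hlc)=D_\hlc(\cX),
\]
which is the identity on objects and has a fully faithful right adjoint. This right adjoint is the derived identity $\mathbb R\,\id$, computed by fibrant replacement $C\mapsto V^\infty C$ (Corollary~\ref{cor:homogeneous-envelope}). Its essential image is the full subcategory of $S$-local objects, which by Theorem~\ref{thm:localized-model-structure} are exactly the homotopy locally constant complexes.

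\emph{Step 2: the kernel.} An object $C$ satisfies $\gamma(C)\cong0$ precisely when $0\to C$ is an $S$-equivalence. By Theorem~\ref{thm:localized-model-structure} this means $0\to C$ is a global equivalence, and by Lemma~\ref{lem:global-equivalence-cofiber} that holds exactly when $C\in D_\gc(\cX)$. Hence $\ker\gamma=D_\gc(\cX)$. Since $\bbL\Asm$ is exact and commutes with coproducts (it is a homotopy colimit), $D_\gc(\cX)$ is a localizing, in particular thick, triangulated subcategory, so the Verdier quotient makes sense.

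\emph{Step 3: the universal property.} Because $\gamma$ kills $D_\gc(\cX)$, it factors as $\bar\gamma\colon D(\cX)/D_\gc(\cX)\to D_\hlc(\cX)$. To show $\bar\gamma$ is an equivalence I would use the standard criterion (Neeman, or Krause's treatment of Bousfield localization): an exact functor with a fully faithful right adjoint is, up to equivalence, the Verdier quotient by its kernel. Concretely, for each $C$ the unit triangle
\[
    C\to V^\infty C\to \cofib\to\Sigma C
\]
has $\cofib\in D_\gc(\cX)$, and $V^\infty C$ is right orthogonal to $D_\gc(\cX)$ by $S$-locality. This gives a semi-orthogonal decomposition $\bigl(D_\gc(\cX),D_\hlc(\cX)\bigr)$. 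It follows that $\Hom_{D(\cX)/D_\gc}(C,D)\cong\Hom_{D(\cX)}(C,V^\infty D)\cong\Hom_{D_\hlc}(C,D)$, so $\bar\gamma$ is fully faithful. It is essentially surjective because it is the identity on objects.

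The main obstacle is Step 3: checking that the triangulated structure on $\Ho(\cM_\hlc)$, coming from its stable model structure, agrees with the one transported from the quotient, so that $\bar\gamma$ is an equivalence of triangulated categories and not merely of categories. I would handle this by noting that $\gamma$ is exact, being the derived functor of a left Quillen functor between stable model categories, and that the Verdier quotient functor is exact. The factorization $\bar\gamma$ is then exact by the universal property of Verdier quotients among exact functors, and an exact equivalence of categories is an equivalence of triangulated categories.
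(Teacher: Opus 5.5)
Your proposal is correct and follows the same route as the paper. The paper's proof cites the general fact that a stable left Bousfield localization identifies the localized homotopy category with the Verdier quotient by the acyclic objects, and then uses Theorem~\ref{thm:localized-model-structure} to identify those objects with $D_\gc(\cX)$; your Steps 1--3 unpack that general fact explicitly (the fully faithful derived right adjoint, the kernel computation via Lemma~\ref{lem:global-equivalence-cofiber}, and the unit triangle $C\to V^\infty C\to\cofib$ that gives full faithfulness and exactness of the induced functor).
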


\begin{proof}
Stable left Bousfield localization identifies the localized homotopy category with the Verdier quotient by its acyclic objects. By Theorem~\ref{thm:localized-model-structure}, those acyclic objects are precisely the globally contractible complexes.
\end{proof}

This quotient description makes precise the slogan that homotopy local systems are obtained from all cellular cosheaf complexes by killing the part invisible to universal assembly.

\begin{prop}\label{prop:hlc-generator}
Let
\[
    \ell\colon D(\cX) \longrightarrow D_\hlc(\cX)
\]
be the localization functor. For any $0$-cell $x\in \cX$, the object $\ell(\{\hx\}_R)$ is a compact generator of $D_\hlc(\cX)$.
\end{prop}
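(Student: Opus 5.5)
The plan is to prove compactness and generation separately, reducing both to facts about $D(\cX)$ and the localized model structure.

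\emph{Generation.} Since $\cX$ is finite, the elementary cosheaves $\{\hy\}_R$, for $y\in\cX$, form a set of compact generators of $D(\cX)$. This follows from Proposition~\ref{prop:elementary-adjunction}: we have $\Hom_{D(\cX)}(\{\hy\}_R,\Sigma^n C)\cong H_{-n}(C(y))$, and an object with all values acyclic is zero. The localization functor $\ell$ is essentially surjective and preserves coproducts, so the objects $\ell(\{\hy\}_R)$ generate $D_\hlc(\cX)$. By Proposition~\ref{prop:assembly-elementary}, for each relation $a\le b$ the map $\varphi_{ab}$ is a global equivalence, so $\ell(\varphi_{ab})$ is an isomorphism. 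Because $X$ is connected, the comparability graph of $\cX$ is connected (as in Corollary~\ref{cor:path-posets-all}). Hence every $y$ is joined to $x$ by a fence, and composing these isomorphisms and their inverses gives $\ell(\{\hy\}_R)\cong\ell(\{\hx\}_R)$. So the single object $\ell(\{\hx\}_R)$ generates.

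A more hands-on check is also available. If $\Hom_{D_\hlc}(\ell\{\hx\}_R,\Sigma^n Z)=0$ for all $n$, take $Z$ fibrant, i.e. homotopy locally constant by Theorem~\ref{thm:localized-model-structure}. Since $\{\hx\}_R$ is cofibrant, the hom in the localized homotopy category equals the hom in $D(\cX)$, namely $H_{-n}(Z(x))$. So $Z(x)$ is acyclic, hence every $Z(y)$ is acyclic, and $Z\cong 0$.

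\emph{Compactness.} I would use the description $\Hom_{D_\hlc}(\ell\{\hx\}_R,W)\cong \Hom_{D(\cX)}(\{\hx\}_R,R_\hlc W)=H_0(R_\hlc W(x))$, where $R_\hlc$ is the fibrant replacement in $\cM_\hlc$. Compactness then reduces to showing that fibrant replacement commutes with coproducts up to weak equivalence, equivalently that the class of homotopy locally constant complexes is closed under coproducts in $D(\cX)$. The latter is clear, since coproducts are computed objectwise and direct sums of quasi-isomorphisms are quasi-isomorphisms. Given this, the local objects form a localizing subcategory, so the Bousfield localization is smashing. Then $\ell$ has a coproduct-preserving right adjoint, the inclusion of $S$-local objects. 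It follows that $\ell$ sends the compact object $\{\hx\}_R$ of $D(\cX)$ to a compact object: a map into $\bigoplus W_i$ corresponds to a map into $\bigoplus R_\hlc W_i \simeq R_\hlc(\bigoplus W_i)$ in $D(\cX)$, where compactness of $\{\hx\}_R$ applies.

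\emph{Main obstacle.} The hard step is the smashing property, i.e. showing that $\bigoplus R_\hlc W_i\to R_\hlc(\bigoplus W_i)$ is an equivalence. I would argue as follows. The map $\bigoplus W_i\to\bigoplus R_\hlc W_i$ is a coproduct of global equivalences. Because $\bbL\Asm$ is a homotopy colimit and commutes with coproducts, this coproduct is again a global equivalence. Its target is homotopy locally constant, hence fibrant. So it is a fibrant replacement of $\bigoplus W_i$, and the comparison map is an equivalence by uniqueness of local replacements. The Verdier quotient description of Proposition~\ref{prop:verdier-quotient} gives the same conclusion, using Neeman's criterion: $D_\gc(\cX)$ is generated by compact objects of $D(\cX)$, namely the cofibers of the maps $\varphi_{ab}$.
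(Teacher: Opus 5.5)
Your proposal is correct. For generation it follows the paper's argument: elementary cosheaves generate $D(\cX)$, each $\ell(\varphi_{ab})$ is invertible by Proposition~\ref{prop:assembly-elementary}, and connectivity identifies every $\ell(\{\hy\}_R)$ with $\ell(\{\hx\}_R)$. The paper routes this through $0$- and $1$-cells, you through arbitrary fences, and your second check via homotopy locally constant fibrant objects is also valid.

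For compactness you use the same skeleton, but you supply a step the paper's proof leaves out. The paper only notes that $\ell$ preserves coproducts and has a fully faithful right adjoint. That does not by itself make $\ell$ preserve compact objects: derived $p$-completion on $D(\bbZ)$ has both properties and sends $\bbZ$ to a non-compact object. What is actually needed is that the right adjoint, the inclusion of local objects, preserves coproducts. Your two observations give exactly this. Homotopy locally constant complexes are closed under objectwise direct sums, and $\bbL\Asm$ commutes with coproducts, so $\bigoplus W_i\to\bigoplus R_\hlc W_i$ is a local replacement. This makes the localization smashing, and the compactness claim follows.

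The only loose end is your side remark via Neeman's theorem. The claim that $D_\gc(\cX)$ is generated by the cofibers of the $\varphi_{ab}$ needs a short proof. The right orthogonal of these cofibers is the class of homotopy locally constant complexes. By Theorem~\ref{thm:localized-model-structure}, this class is also the right orthogonal of $D_\gc(\cX)$. A localizing subcategory generated by a set of compact objects equals the left orthogonal of its right orthogonal, which forces the two subcategories to coincide.
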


\begin{proof}[Proof]
The elementary cosheaf $\{\hx\}_R$ is a bounded complex of finitely generated projective cellular cosheaves, hence compact in $D(\cX)$. The localization functor $\ell$ preserves coproducts and admits the fully faithful right adjoint given by inclusion of local objects, so $\ell(\{\hx\}_R)$ remains compact.

To see that it generates, recall that the elementary cosheaves generate $D(\cX)$. In the localized category every map $\varphi_{xy}$ with $x\le y$ becomes an isomorphism. Therefore if $v\le y$ with $v$ a $0$-cell, then $\ell(\{\widehat{v}\}_R)\cong \ell(\{\hy\}_R)$. Because $X$ is connected, any two $0$-cells are linked by a zig-zag of incident $1$-cells, so all objects $\ell(\{\widehat{v}\}_R)$ for $0$-cells $v$ are naturally isomorphic. Hence the localizing subcategory generated by $\ell(\{\hx\}_R)$ contains the image of every elementary cosheaf, and therefore all of $D_\hlc(\cX)$.
\end{proof}

\begin{proof}[Proof of Theorem B]
    It follows from Propostion \ref{prop:verdier-quotient} and Proposition \ref{prop:hlc-generator}.
\end{proof}

Proposition~\ref{prop:hlc-generator} reduces the algebraic side of the discussion to a single compact generator. To identify its derived endomorphism algebra, we therefore turn to a based-space construction in which loops start and end at the chosen basepoint.

\section{Loop DG algebra}\label{sec:loop-dg-algebra}

Return now to the finite case, and keep the notation
\[
    p:=\ev_1\colon \cE_x\cX \longrightarrow \cX.
\]
In Section~\ref{sec:cellular-loop-spaces} we constructed the missing geometric ingredient: a combinatorial loop object whose classifying space models $\Omega_xX$. We now show that the same construction controls the endomorphisms of the compact generator of $D_\hlc(\cX)$. To match the order of composition of endomorphisms, it is convenient to write formulas using the opposite poset $\varOmega_x\cX\op$. Since a category and its opposite have canonically homeomorphic classifying spaces, this changes only the bookkeeping of multiplication, not the underlying homotopy type.

\subsection{A canonical homotopy local system}

\begin{defn}
With $p=\ev_1$ as above, define the \textbf{canonical homotopy local system}
\[
    E_x:=\bbL p_*(\underline{R}) \in D(\cX),
\]
where $\underline{R}$ denotes the constant cellular cosheaf of $R$-modules on $\cE_x\cX$.
\end{defn}

\begin{prop}\label{prop:Ex-stalks}
For every cell $y\in \cX$ there is a natural isomorphism in $D(R)$
\[
    E_x(y) \cong C_*^{\mathrm{sing}}(\cbB\cP_{x,y}\cX;R).
\]
In particular,
\[
    E_x(x) \cong C_*^{\mathrm{sing}}(\cbB\varOmega_x\cX;R)
    \cong C_*^{\mathrm{sing}}(\cbB\varOmega_x\cX\op;R).
\]
\end{prop}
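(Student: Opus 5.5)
We compute the stalk of $E_x=\bbL p_*(\underline{R})$ at $y$ directly from Proposition~\ref{prop:derived-pushforward-hocolim}. Since $p=\ev_1\colon \cE_x\cX\to\cX$, that proposition gives
\[
    E_x(y)\cong \hocolim_{z\in \cE_x\cX,\ p(z)\ge y} \underline{R}(z)
    =\hocolim_{(y\downarrow \ev_1)\op}\underline{R}.
\]
The indexing category is the opposite of the full subposet $(y\downarrow\ev_1)$ of Proposition~\ref{prop:comma-retract}, because cosheaves are functors on the opposite poset. So the stalk is the homotopy colimit of a constant diagram with value $R$ over a poset.

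Next we compute this homotopy colimit exactly as in the proof of Corollary~\ref{cor:constant-cosheaf-chains}. The bar construction for a constant diagram $\underline{R}$ over a small category $\cA$ is the simplicial $R$-module $R[\cN\cA]$. Its normalization is the normalized simplicial chain complex of $\cN\cA$, which is naturally quasi-isomorphic to $C_*^{\mathrm{sing}}(\cbB\cA;R)$. With $\cA=(y\downarrow\ev_1)\op$ and $\cbB\cA\op\cong\cbB\cA$, this gives
\[
    E_x(y)\cong C_*^{\mathrm{sing}}(\cbB(y\downarrow \ev_1);R).
\]
By Proposition~\ref{prop:comma-retract}, the inclusion $\iota_y$ and the retraction $s_y$ induce inverse homotopy equivalences $\cbB(y\downarrow\ev_1)\simeq\cbB\cP_{x,y}\cX$. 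Singular chains take homotopy equivalences to chain homotopy equivalences, which proves the first claim.

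For the second claim, set $y=x$, so that $\cP_{x,x}\cX=\varOmega_x\cX$. The canonical homeomorphism $\cbB\varOmega_x\cX\cong\cbB\varOmega_x\cX\op$, which reverses the simplices of the nerve, induces the last isomorphism on singular chains.

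The main obstacle is naturality in $y$, i.e.\ compatibility with the structure maps of $E_x$ for $y\le y'$. The homotopy colimit description is strictly functorial: the structure map is induced by the inclusion $(y'\downarrow\ev_1)\hookrightarrow(y\downarrow\ev_1)$. The equivalence with $\cP_{x,y}\cX$, however, goes through the retractions $s_y$, which do not commute strictly with these inclusions. I therefore plan to state naturality using the comma categories $(y\downarrow\ev_1)$ themselves, and to transfer it to $\cP_{x,y}\cX$ only up to the natural transformations $\iota_y s_y\Rightarrow\id$. On classifying spaces these transformations give homotopies, so the resulting squares commute in $D(R)$. This is the form of naturality needed later.
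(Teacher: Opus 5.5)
Your proposal is correct and follows the paper's proof step for step. You use Proposition~\ref{prop:derived-pushforward-hocolim} to write $E_x(y)$ as a homotopy colimit of $\underline{R}$ over the under-category, the bar construction to identify it with chains on $\cbB(y\downarrow \ev_1)$, Proposition~\ref{prop:comma-retract} to pass to $\cP_{x,y}\cX$, and the homeomorphism $\cbB\cA\cong\cbB\cA\op$ for the last isomorphism. Your extra care goes beyond what the paper writes without changing the argument: you track the opposite indexing category, and you note that naturality in $y$ holds only up to the homotopies coming from $\iota_y s_y\Rightarrow\id$, hence in $D(R)$.
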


\begin{proof}
By Proposition~\ref{prop:derived-pushforward-hocolim},
\[
    E_x(y)
    \,\cong\,
    \hocolim_{p(z)\ge y} \underline{R}(z).
\]
The indexing poset is precisely the under-category $(y\downarrow p)$, so the homotopy colimit of the constant diagram is the singular chain complex of its classifying space:
\[
    E_x(y)
    \cong
    C_*^{\mathrm{sing}}(\cbB(y\downarrow p);R).
\]
By Proposition~\ref{prop:comma-retract}, $\cbB(y\downarrow p)\simeq \cbB\cP_{x,y}\cX$, and this gives the first claim. When $y=x$ we have $\cP_{x,x}=\varOmega_x\cX$. The final identification uses the canonical homeomorphism between the realizations of the nerve of a category and of its opposite.
\end{proof}

\begin{prop}\label{prop:Ex-is-hlc}
The object $E_x$ is homotopy locally constant.
\end{prop}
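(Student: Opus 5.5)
We need to show that for every relation $a\le b$ in $\cX$ the structure map $E_x(b)\to E_x(a)$ is a quasi-isomorphism. The plan is to compute this map explicitly through the homotopy-colimit description of Proposition~\ref{prop:derived-pushforward-hocolim}. Then we recognize it as chains on the inclusion of under-categories that already appeared in the proof of Theorem~\ref{thm:loop-comparison}.

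\textbf{Step 1: identify the structure map.} By Proposition~\ref{prop:derived-pushforward-hocolim}, $E_x(y)\cong \hocolim_{(y\downarrow p)}\underline{R}$, naturally in $y\in\cX\op$. For $a\le b$ we have $(b\downarrow p)\subseteq (a\downarrow p)$. The structure map $E_x(b)\to E_x(a)$ of the left Kan extension is the canonical map of homotopy colimits of the constant diagram induced by this inclusion of indexing posets. Computing both sides by the bar construction, as in Corollary~\ref{cor:constant-cosheaf-chains}, identifies it in $D(R)$ with
\[
  C_*^{\mathrm{sing}}(\cbB(b\downarrow p);R)\longrightarrow C_*^{\mathrm{sing}}(\cbB(a\downarrow p);R),
\]
the map induced on chains by the inclusion $\iota\colon(b\downarrow p)\hookrightarrow(a\downarrow p)$. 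The point to check is naturality: the bar construction applied to the constant diagram is the nerve of the indexing category, functorially in functors between indexing categories, so the square commutes on the nose.

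\textbf{Step 2: the inclusion is a homotopy equivalence.} A chain of abstract equivalences is not enough here; we need that the specific map $\cbB\iota$ is a homotopy equivalence. We use the explicit functors from Propositions~\ref{prop:comma-retract} and~\ref{prop:path-posets-comparable}. Define $\sigma\colon(a\downarrow p)\to(b\downarrow p)$ by $\sigma(z)=s_{a,b}s_a(z)=(\cdots\ge a\le b)$. This is order preserving because endpoint extensions are compatible with the elementary moves, and its endpoint is $b$.

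For $w\in(b\downarrow p)$, the composite $\sigma\iota(w)=(\cdots\ge a\le b)$ ends with $w_n\ge a\le b$. We compare it with $w$ through the intermediate fence $s_b(w)=(\cdots w_n\ge b)$:
\begin{itemize}
\item by endpoint extension, $s_b(w)\le w$;
\item since $a\le b$, the fence $(\cdots w_n\ge a\le b)$ arises from $(\cdots w_n\ge b)$ by the refinement $(w_n\ge b)\le(w_n\ge a\le b)$, which is a valley-shift followed by degeneracy. This gives $s_b(w)\le \sigma\iota(w)$.
\end{itemize}
Both comparisons are natural in $w$, so we get a zig-zag of natural transformations $\sigma\iota\Leftarrow s_b\Rightarrow\id$.

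Similarly, for $z\in(a\downarrow p)$ the composite $\iota\sigma(z)=(\cdots z_n\ge a\le b)$ and $z$ are both related to $s_a(z)$. Endpoint extension gives $s_a(z)\le z$, and it also gives $s_a(z)\le(\cdots\ge a\le b)$. This again yields a zig-zag of natural transformations, so $\cbB\iota$ is a homotopy equivalence and the map of Step 1 is a quasi-isomorphism.

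\textbf{Main obstacle.} The main obstacle is Step 1: making sure the $D(\cX)$-structure map of $\bbL p_*$ really is the map induced by $\iota$, rather than just abstractly isomorphic to it. The cleanest way is to use the explicit cofibrant replacement of $\underline{R}$ by the bar resolution $\bigoplus\{\widehat{z_0}\}_R$ over chains $z_0\le\cdots\le z_k$ in $\cE_x\cX$. Applying the strict $p_*$ to this resolution gives, at $y$, the simplicial chains of the nerve of $(y\downarrow p)$, with restriction maps given by the inclusions. A secondary point is that the valley-shift used in Step 2 is legitimate as an elementary move of Definition~\ref{defn:fence-order} up to degeneracies; if it is not, we fall back on composing the explicit equivalences already established in Propositions~\ref{prop:comma-retract} and~\ref{prop:path-posets-comparable} and checking that the resulting triangle commutes up to natural transformation.
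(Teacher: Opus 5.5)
Your proposal follows the paper's proof. You identify the structure map $E_x(b)\to E_x(a)$ with chains on the inclusion $\iota\colon(b\downarrow p)\hookrightarrow(a\downarrow p)$, and then show this inclusion is an equivalence using the endpoint-extension functors of Propositions~\ref{prop:comma-retract} and~\ref{prop:path-posets-comparable}. Your explicit inverse $\sigma=s_{a,b}s_a$ makes precise the compatibility that the paper leaves implicit (``this becomes the map between path posets'').

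One harmless slip: under the valley-shift rule of Definition~\ref{defn:fence-order}, lowering the valley from $b$ to $a$ makes the fence smaller. So $\sigma\iota(w)\le s_b(w)$, not $\ge$. This still gives $\sigma\iota\Rightarrow s_b\Rightarrow\id$, so the conclusion stands. Alternatively, use $s_a\iota(w)$, which lies below both $w$ and $\sigma\iota(w)$ and mirrors your argument for $\iota\sigma$.
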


\begin{proof}
Let $y\le z$ in $\cX$. By Proposition~\ref{prop:Ex-stalks}, the corresponding structure map of $E_x$ is represented by the map on chains induced from the functor between under-categories
\[
    (z\downarrow p) \longrightarrow (y\downarrow p).
\]
After identifying these under-categories with $\cP_{x,z}\cX$ and $\cP_{x,y}\cX$ via Proposition~\ref{prop:comma-retract}, this becomes the map between path posets induced by the relation $y\le z$. Proposition~\ref{prop:path-posets-comparable} shows that the induced map on classifying spaces is a homotopy equivalence. Therefore, the associated map on singular chains is a quasi-isomorphism. Since this holds for every relation $y\le z$, the object $E_x$ is homotopy locally constant.
\end{proof}

Analogous to the homogeneous envelope $V^\infty\Gamma$ of \cite{RanickiWeiss}, the object $E_x$ is a fibrant replacement of the elementary representable cosheaf $\{\hx\}_R$ at the basepoint $x$:

\begin{prop}\label{prop:Ex-envelope}
There is a canonical morphism
\[
    \eta_x\colon \{\hx\}_R \longrightarrow E_x
\]
which becomes an isomorphism in $D_\hlc(\cX)$. Consequently $E_x$ is a homogeneous envelope of $\{\hx\}_R$.
\end{prop}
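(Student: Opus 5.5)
The plan is to build $\eta_x$ through the adjunction of Proposition~\ref{prop:elementary-adjunction}, and then show it is a global equivalence by computing $\bbL\Asm$ of both sides. Since $\{\hx\}_R$ is cofibrant, morphisms $\{\hx\}_R\to E_x$ in $D(\cX)$ correspond to elements of $H_0(E_x(x))$. By Proposition~\ref{prop:Ex-stalks}, $H_0(E_x(x))\cong H_0(C_*^{\mathrm{sing}}(\cbB\varOmega_x\cX;R))$. For $\eta_x$ we take the class of the $0$-simplex given by the constant fence $(x)$, which is the unit of the monoid structure from Proposition~\ref{prop:loop-monoid}. At the point-set level we can realise this as follows. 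Replace $\underline{R}$ on $\cE_x\cX$ by its bar-type cofibrant replacement, so that $E_x(x)$ is the chain complex of the nerve of $(x\downarrow p)$. Then send the generator $1\in R=\{\hx\}_R(x)$ to the vertex $(x)$. Naturality in $y\le x$ is automatic, because $\{\hx\}_R$ is the free object on its value at $x$.

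Next, $E_x$ is homotopy locally constant by Proposition~\ref{prop:Ex-is-hlc}. By Theorem~\ref{thm:localized-model-structure} it is therefore fibrant in $\cM_\hlc$. So it is enough to prove that $\eta_x$ is a global equivalence, i.e. that $\bbL\Asm(\eta_x)$ is an isomorphism in $D(R[\pi])$. The source is easy: Proposition~\ref{prop:assembly-elementary} gives $\bbL\Asm(\{\hx\}_R)\cong R[\pi]$, concentrated in degree $0$.

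For the target we use that derived pushforwards compose. Assembly is $\bbL$ of the pushforward to a point along the universal cover, so we pull $p$ back along $q$. We form $\widetilde{\cE}:=\cE_x\cX\times_{\cX}\widetilde{\cX}$, which is a disjoint union of copies of $\cE_x\cX$ indexed by lifts of the basepoint, and $\pi$ acts on it freely by deck transformations. Then
\[
\bbL\Asm(E_x)\cong \hocolim_{\widetilde{\cE}\op}\underline{R}\cong C_*^{\mathrm{sing}}(\cbB\widetilde{\cE};R),
\]
with its $\pi$-action, by Corollary~\ref{cor:constant-cosheaf-chains}. Concretely, we lift each fence uniquely to $\widetilde{\cX}$ starting at each lift $\widetilde x$ of $x$; this works because fences lift uniquely along a covering of regular CW complexes. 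So $\cbB\widetilde{\cE}\cong\coprod_{g\in\pi}\cbB\cE_x\cX$, and $\pi$ permutes the summands freely. By Proposition~\ref{prop:Ex-contractible} each summand is contractible. Hence $\bbL\Asm(E_x)\simeq R[\pi]$ in degree $0$, and $\bbL\Asm(\eta_x)$ sends the generator $g\in R[\pi]$ to the vertex $g\cdot(x)$. That is a generator of the $g$-th summand in $H_0$, so the map is an isomorphism.

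The main obstacle is the second step: making the base change $\bbL\Asm\circ\bbL p_*\cong \bbL(\text{pushforward along }\widetilde{\cE}\to *)$ precise. This requires checking that the under-categories of $\widetilde{\cE}\to\widetilde{\cX}$ agree with the pulled-back under-categories of $p$, so that the pointwise homotopy-colimit formula of Proposition~\ref{prop:derived-pushforward-hocolim} commutes with $q^*$. It also requires identifying the components of $\widetilde{\cE}$. I would first try to verify this directly on nerves. In both cases the comma posets are posets of fences ending above a given cell, and unique path lifting matches them bijectively, compatibly with the order generated by the elementary moves.
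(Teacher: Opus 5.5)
Your proof is correct, but it takes a different route from the paper's.

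\textbf{The paper's route.} The paper does not use the universal cover here. It checks that $\eta_x$ induces an isomorphism on derived mapping complexes into every homotopy local system $L$, which is the defining criterion for a local equivalence. On one side, $\uHom_{D(\cX)}(\{\hx\}_R,L)\cong L(x)$. On the other side, the derived adjunction gives $\uHom_{D(\cX)}(E_x,L)\cong \uHom_{D(\cE_x\cX)}(\underline R,p^*L)$. The paper then asserts that $p^*L$, being homotopy locally constant over the contractible poset $\cE_x\cX$, is equivalent to the constant diagram on $L(x)$.

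\textbf{Your route.} You use the identification of the weak equivalences of $\cM_{\hlc}$ with global equivalences, and compute $\bbL\Asm$ of both sides. This goes through base change of $\bbL p_*$ along the covering, plus unique lifting of fences, which yields a poset isomorphism
\[
\cE_x\cX\times_{\cX}\widetilde{\cX}\cong\coprod_{q^{-1}(x)}\cE_x\cX .
\]
The checks you flag do go through:
\begin{itemize}
\item The under-categories agree because $\st(\widetilde y)\to\st(y)$ is a poset isomorphism for the covering.
\item Each elementary move leaves the lifted endpoint unchanged or moves it in the required direction, so the lifting map is order preserving.
\item Conversely, if $(z,\widetilde w)\le(z',\widetilde w')$, uniqueness of lifts inside $\st(\widetilde w)$ forces both to lie over the same lift of $x$. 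So the copies are genuinely disjoint as posets, not merely disjoint as sets.
\end{itemize}

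\textbf{Comparison.} The paper's argument is shorter and purely formal. However, it rests on a rigidification statement: a homotopy locally constant diagram over a poset with contractible classifying space is equivalent, in $D(\cE_x\cX)$, to a constant one. The paper only sketches this by appeal to Lemma~\ref{lem:hlc-global-zero}, where the groupoid-completion argument was used just objectwise in $D(R)$. Your argument avoids that step and reduces everything to Proposition~\ref{prop:Ex-contractible}, at the cost of the covering-space bookkeeping. It also makes visible the topological picture: the cellular path fibration pulled back to $\widetilde X$ splits into $|\pi|$ contractible pieces, and these pieces match the summands of $\bbL\Asm(\{\hx\}_R)\cong R[\pi]$.
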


\begin{proof}[Proof]
Under Proposition~\ref{prop:Ex-stalks}, the value $E_x(x)$ is the chain complex of the loop space $\cbB\varOmega_x\cX$. The constant loop fence $(x)$ determines a canonical $0$-simplex of this space and hence a map
\[
    R \longrightarrow E_x(x).
\]
By Proposition~\ref{prop:elementary-adjunction}, this induces a morphism $\eta_x\colon \{\hx\}_R\to E_x$.

To see that $\eta_x$ becomes invertible after localization, let $L$ be any homotopy local system. Since $E_x$ is itself homotopy locally constant, it suffices to compare maps into such objects. Proposition~\ref{prop:elementary-adjunction} gives
\[
    \uHom_{D(\cX)}(\{\hx\}_R,L) \cong L(x).
\]
On the other hand, derived adjunction yields
\[
    \uHom_{D(\cX)}(E_x,L)
    \cong
    \uHom_{D(\cE_x\cX)}(\underline R,p^*L).
\]
The pullback $p^*L$ is homotopy locally constant on $\cE_x\cX$, and $\cbB\cE_x\cX$ is contractible by Proposition~\ref{prop:Ex-contractible}. By the same groupoid-completion argument used in Lemma~\ref{lem:hlc-global-zero}, $p^*L$ is therefore equivalent to the constant diagram with value $L(x)$. Hence
\[
    \uHom_{D(\cE_x\cX)}(\underline R,p^*L) \cong L(x).
\]
Under these identifications, precomposition with $\eta_x$ acts as the identity on $L(x)$. Therefore $\eta_x$ induces an isomorphism on derived mapping complexes into every local object, which is exactly the criterion for becoming an isomorphism in the left Bousfield localization. Thus $\eta_x$ identifies $E_x$ with the image of $\{\hx\}_R$ in $D_\hlc(\cX)$.
\end{proof}

\subsection{Derived endomorphisms}

\begin{prop}[Derived endomorphisms are controlled by cellular loops]\label{prop:derived-endomorphisms}
There is an isomorphism
\[
    \uHom_{D_\hlc(\cX)}(\{\hx\}_R,\{\hx\}_R)
    \cong
    C_*^{\mathrm{sing}}(\cbB\varOmega_x\cX\op;R)
\]
in $D(R)$.
\end{prop}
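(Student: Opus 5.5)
The plan is to compute the derived endomorphisms in the localized category by mapping into a fibrant (local) replacement of the generator, and then to read off the answer from the stalk computation of Proposition~\ref{prop:Ex-stalks}.

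First, recall how morphisms are computed in a left Bousfield localization. For a cofibrant source $A$ and a local target $Z$, the derived mapping complexes satisfy
\[
    \uHom_{D_\hlc(\cX)}(A,B)\cong \uHom_{D(\cX)}(A,Z)
\]
whenever $B\to Z$ is a fibrant replacement in $\cM_\hlc$. The elementary cosheaf $\{\hx\}_R$ is cofibrant by Proposition~\ref{prop:projective-model-structure}. By Proposition~\ref{prop:Ex-envelope}, the morphism $\eta_x\colon \{\hx\}_R\to E_x$ is a global equivalence, and by Proposition~\ref{prop:Ex-is-hlc} together with Theorem~\ref{thm:localized-model-structure}, $E_x$ is fibrant in $\cM_\hlc$. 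Hence
\[
    \uHom_{D_\hlc(\cX)}(\{\hx\}_R,\{\hx\}_R)\cong \uHom_{D(\cX)}(\{\hx\}_R,E_x).
\]
If one prefers a point-set model, $E_x$ should be represented by the cofibrant-replaced strict pushforward $p_*Q\underline R$, which may then be fibrantly replaced. Every object is fibrant in $\cM$, and local objects are characterized up to weak equivalence, so this choice causes no difficulty.

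Second, apply the derived form of the adjunction in Proposition~\ref{prop:elementary-adjunction}. Since $\ev_x$ is exact and $\{\hx\}_R$ is projective, the adjunction passes to mapping complexes and gives
\[
    \uHom_{D(\cX)}(\{\hx\}_R,E_x)\cong E_x(x).
\]
Third, Proposition~\ref{prop:Ex-stalks} identifies $E_x(x)\cong C_*^{\mathrm{sing}}(\cbB\varOmega_x\cX;R)$. The canonical homeomorphism $\cbB\varOmega_x\cX\cong\cbB\varOmega_x\cX\op$ then yields the stated isomorphism in $D(R)$.

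The main obstacle is the first step, specifically making sure that $\uHom_{D_\hlc}$ is computed by mapping into a local replacement at the level of mapping complexes and not only of $\Hom$-groups. I would settle this using the standard fact that a left Bousfield localization of a stable combinatorial model category is simplicial or DG-enriched compatibly with $\cM$ (Hirschhorn, Thm.~3.3.19 / 4.1.1), so derived mapping complexes in $L_S\cM$ are those of $\cM$ with a local target. Alternatively, since the statement is only an isomorphism in $D(R)$, I would argue through homotopy groups: for each $n$, $H_n$ of both sides is $\Hom_{D_\hlc}(\Sigma^n\{\hx\}_R,\{\hx\}_R)$. The right adjoint in the Verdier quotient (Proposition~\ref{prop:verdier-quotient}) is the inclusion of local objects, which identifies this group with $\Hom_{D(\cX)}(\Sigma^n\{\hx\}_R,E_x)$. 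This gives the quasi-isomorphism once the comparison map is constructed from $\eta_x$.
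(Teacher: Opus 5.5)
Your proposal is correct and follows essentially the same route as the paper's proof. You replace the target by the local object $E_x$ via $\eta_x$ (Propositions~\ref{prop:Ex-envelope} and~\ref{prop:Ex-is-hlc}), reduce $\uHom_{D(\cX)}(\{\hx\}_R,E_x)$ to the stalk $E_x(x)$ by the elementary adjunction, and finish with Proposition~\ref{prop:Ex-stalks} and the homeomorphism $\cbB\varOmega_x\cX\cong\cbB\varOmega_x\cX\op$. Your extra care about computing mapping complexes rather than only $\Hom$-groups in the localization covers a point the paper passes over silently, but it does not change the argument.
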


\begin{proof}
By Proposition~\ref{prop:Ex-envelope}, the objects $\{\hx\}_R$ and $E_x$ are isomorphic in $D_\hlc(\cX)$. Since $E_x$ is homotopy locally constant, it is already fibrant in the localized model structure, and therefore
\[
    \uHom_{D_\hlc(\cX)}(\{\hx\}_R,\{\hx\}_R)
    \cong
    \uHom_{D(\cX)}(\{\hx\}_R,E_x).
\]
Because $\{\hx\}_R$ is an elementary cofibrant object, Proposition~\ref{prop:elementary-adjunction} identifies the right-hand side with the value of $E_x$ at $x$:
\[
    \uHom_{D(\cX)}(\{\hx\}_R,E_x) \cong E_x(x).
\]
Now Proposition~\ref{prop:Ex-stalks} gives
\[
    E_x(x) \cong C_*^{\mathrm{sing}}(\cbB\varOmega_x\cX;R)
    \cong C_*^{\mathrm{sing}}(\cbB\varOmega_x\cX\op;R).
\]
Combining these identifications proves the claim, and also the first part of Theorem C.
\end{proof}

Let us abbreviate
\[
    A_x:=C_*^{\mathrm{sing}}(\cbB\varOmega_x\cX\op;R).
\]
By Corollary~\ref{cor:loop-dg-algebra}, together with the passage to the opposite multiplication convention, $A_x$ is a strictly associative DG algebra.

\subsection{DG algebra and augmentation}

\begin{prop}[Augmentation to the group ring]\label{prop:augmentation}
There is a morphism of DG algebras
\[
    \varepsilon\colon A_x \longrightarrow R[\pi]
\]
which induces an isomorphism on $H_0$.
\end{prop}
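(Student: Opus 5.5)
The plan is to build $\varepsilon$ in two stages. First, a map of monoids in $\Pos$ (up to the $\op$ bookkeeping)
\[
    \gamma\colon \varOmega_x\cX \longrightarrow \pi,
\]
where $\pi$ is regarded as a discrete poset. Then compose the induced map on chains with the augmentation of singular chains of a discrete monoid.

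Concretely, a loop fence $(x=x_0,x_1,\dots,x_n=x)$ is a zig-zag of morphisms in $\cX$. Applying the localization functor $\cX\to\cX[\cX^{-1}]\cong\Pi_1(X)\cong\cB\pi$ of Proposition~\ref{prop:local-systems-groupoid} yields a composite automorphism of $x$, i.e.\ an element $\gamma(z)\in\pi$. We check the following in turn.
\begin{itemize}
    \item $\gamma$ is well defined on equivalence classes, since a repeated entry contributes an identity.
    \item $\gamma$ is constant along each elementary move of Definition~\ref{defn:fence-order}. For instance, a refinement $a\le y\le b$ inserts a factorization of the arrow $a\le b$, and an internal shift replaces $a\le b\ge c$ by $a\le y\ge c$ with $b\le y$; these agree in the groupoid by commutativity of triangles in the poset. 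An endpoint extension followed by returning to $x$ cancels.
    \item Hence $\gamma$ is order preserving into the discrete poset $\pi$: comparable elements have equal image.
    \item $\gamma$ takes concatenation to the product. This holds by functoriality of composition, with the order reversed if necessary, which is exactly why $A_x$ is defined with $\varOmega_x\cX\op$.
\end{itemize}
Taking nerves gives a map of simplicial monoids $\cN(\varOmega_x\cX\op)\to\pi$, and hence a map of topological monoids $\cbB\varOmega_x\cX\op\to\pi$. Singular chains with the Eilenberg--Zilber product give a DG algebra map $A_x\to C_*^{\mathrm{sing}}(\pi;R)$. The target is quasi-isomorphic to, and retracts onto, $R[\pi]$ concentrated in degree $0$, via the $H_0$-truncation $C_0\to R[\pi]$ that kills positive-degree chains. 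That truncation is multiplicative because the shuffle product of $0$-simplices is their product in $\pi$. Composing gives $\varepsilon$.

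For the $H_0$ claim, $H_0(A_x)=R[\pi_0(\cbB\varOmega_x\cX)]$, and $\varepsilon$ on $H_0$ is $R$-linearization of $\pi_0(\gamma)\colon\pi_0(\cbB\varOmega_x\cX)\to\pi$. So it suffices to show that $\pi_0(\gamma)$ is a bijection. Surjectivity: every element of $\pi_1(X,x)\cong\operatorname{Aut}_{\cX[\cX^{-1}]}(x)$ is represented by a zig-zag in $\cX$, i.e.\ a loop fence. Injectivity: by Theorem~\ref{thm:loop-comparison} and Corollary~\ref{cor:thm-A-case}, $\pi_0(\cbB\varOmega_x\cX)\cong\pi_0(\Omega_xX)=\pi$ already as sets. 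It then remains to check that $\pi_0(\gamma)$ agrees with this identification, after which it is a bijection.

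The expected main obstacle is this compatibility between the Quillen Theorem~B connecting map $\pi_1(\cbB\cX,x)\to\pi_0(\text{fiber})$ and the combinatorial $\gamma$. I would try to avoid it by a direct argument: two loop fences with the same image in the groupoid completion differ by the elementary relations generating $\cX[\cX^{-1}]$. These are composition of composable arrows, insertion/removal of $y\le y$, and cancellation of $a\le b\ge a$. The task is to realize each of these by a zig-zag of order relations in $\varOmega_x\cX$, using the refinement moves and the endpoint extension/shift moves exactly as in Proposition~\ref{prop:path-posets-comparable}. This shows directly that fibers of $\pi_0(\gamma)$ are connected.
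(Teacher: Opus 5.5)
Your proposal is correct, and it reaches $\varepsilon$ by a different route from the paper.

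The paper labels each path component of $\cbB\varOmega_x\cX\op$ by an element of $\pi_1(X,x)$ using the bijection $\pi_0(\cbB\varOmega_x\cX)\cong\pi_0(\Omega_xX)$ from Theorem~\ref{thm:loop-comparison}. It sends $0$-simplices to the corresponding basis vectors and higher simplices to $0$, so the $H_0$-isomorphism comes for free. Multiplicativity is only asserted, on the grounds that concatenation of fences goes to the product in $\pi_1$. That assertion is exactly the compatibility of the Theorem~B identification with concatenation, and the paper does not check it.

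You reverse the trade-off. Labelling components by the holonomy $\gamma$ in $\cX[\cX^{-1}]$ makes multiplicativity automatic from functoriality. The price is proving that $\pi_0(\gamma)$ is bijective, and your direct argument does this correctly:
\begin{itemize}
\item composition in the Gabriel--Zisman presentation is realized by refinement moves;
\item identities are realized by degeneracies;
\item the cancellations $a\le b\ge a\sim a$ and $a\ge b\le a\sim a$ are realized by internal shifts, e.g.\ $(\cdots a\le a\ge a\cdots)\le(\cdots a\le b\ge a\cdots)$ for $a\le b$.
\end{itemize}
All of these stay inside $\varOmega_x\cX$, because in a loop fence a backtracking peak or valley is always an interior entry. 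Your route therefore gives an explicit formula for $\varepsilon$ and makes the proposition independent of Theorem~A. The paper's route is shorter but rests on the compatibility you correctly identified as the main obstacle.

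Two small remarks.
\begin{itemize}
\item Your comment about endpoint extensions is unnecessary. Along any chain of elementary moves the endpoint $\ev_1$ is nondecreasing, so a chain joining two loop fences never leaves $\varOmega_x\cX$. Any endpoint extension in such a chain is degenerate, so $\gamma$ need only be defined on loop fences.
\item Whatever the $\op$ in $A_x$ is meant to encode, a possible reversal between concatenation and the product in $\pi$ is absorbed by composing with $g\mapsto g^{-1}$. This does not affect the $H_0$ statement, so nothing depends on it.
\end{itemize}
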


\begin{proof}
The target is regarded as a DG algebra concentrated in degree $0$. We define $\varepsilon$ on singular simplices. If
\[
    \sigma\colon \Delta^0 \longrightarrow \cbB\varOmega_x\cX\op
\]
is a $0$-simplex, then $\sigma$ determines a path component of $\cbB\varOmega_x\cX\op$. By Theorem~\ref{thm:loop-comparison}, path components are in bijection with
\[
    \pi_0\bigl(\cbB\varOmega_x\cX\op\bigr)
    \cong
    \pi_0(\Omega_x X)
    \cong
    \pi_1(X,x).
\]
We send $\sigma$ to the basis vector of $R[\pi]$ corresponding to that component. Every singular simplex of positive dimension is sent to $0$.

This is a chain map because the boundary of a $1$-simplex has endpoints lying in the same connected component, hence with the same image in the group ring, while boundaries of higher-dimensional simplices still land in positive degree and are therefore sent to $0$. It is multiplicative because the product on $A_x$ is induced by concatenation of loops: if $\sigma$ and $\tau$ are $0$-simplices, then
\[
    \varepsilon(\sigma\cdot \tau)
\]
is the basis element corresponding to the connected component of the concatenated loop, which is the product of the components of $\sigma$ and $\tau$ in $\pi_1(X,x)$. If either factor has positive degree, both sides are automatically zero.

Finally,
\[
    H_0(A_x)
    \cong
    H_0(\Omega_xX;R)
    \cong
    R[\pi_0(\Omega_xX)]
    \cong
    R[\pi_1(X,x)],
\]
so $H_0(\varepsilon)$ is an isomorphism.
\end{proof}

\begin{proof}[Proof of Theorem C]
    It follows from Proposition \ref{prop:derived-endomorphisms} and Proposition \ref{prop:augmentation}.
\end{proof}

\begin{cor}[Morita description of homotopy local systems]\label{cor:morita-description}
Let
\[
    A_x:=C_*^{\mathrm{sing}}(\cbB\varOmega_x\cX\op;R).
\]
Then the model category of homotopy local systems is Quillen equivalent to the model category of DG modules over $A_x$. Equivalently, there is a triangulated equivalence
\[
    D_\hlc(\cX) \cong D(\Mod_{A_x}).
\]
\end{cor}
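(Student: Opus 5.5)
The plan is to apply the standard Morita theorem for stable model categories with a single compact generator, due to Schwede--Shipley, to the localized model category $\cM_\hlc$. The inputs it needs are all available from earlier results.

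First, $\cM_\hlc$ is a stable, combinatorial, left proper model category, and it is $R$-linear, in fact a $\Ch(R)$-enriched (spectral after applying Eilenberg--MacLane) model category. Stability and combinatoriality come from Theorem~\ref{thm:localized-model-structure} together with the fact that left Bousfield localizations of stable combinatorial model categories at a set of cofibers remain stable. The $\Ch(R)$-enrichment comes from the projective model structure on $\cM$, which is a $\Ch(R)$-model category. It passes to $L_S\cM$ because $S$ consists of maps $0\to K$ and is closed under tensoring with cofibrant complexes of $R$-modules: $D_\gc(\cX)$ is localizing, and $\bbL\Asm$ commutes with $-\otimes_R^{\bbL}V$.

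Second, by Proposition~\ref{prop:hlc-generator}, the image of $\{\hx\}_R$ is a compact generator of $D_\hlc(\cX)$. Its cofibrant--fibrant model in $\cM_\hlc$ is obtained by choosing a cofibrant replacement $Q E_x$ of $E_x$ in $\cM$. This object is cofibrant in $\cM_\hlc$, since cofibrations are unchanged, and fibrant there by Proposition~\ref{prop:Ex-is-hlc} and Theorem~\ref{thm:localized-model-structure}. By Proposition~\ref{prop:Ex-envelope} it is isomorphic to $\{\hx\}_R$ in $D_\hlc(\cX)$.

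The Schwede--Shipley theorem then gives a Quillen equivalence between $\cM_\hlc$ and DG modules over the enriched endomorphism DG algebra $\cE:=\uHom_{\cM}(QE_x,QE_x)$. The functors are $\uHom(QE_x,-)$ and its left adjoint $-\otimes_{\cE}QE_x$, which induce $D_\hlc(\cX)\cong D(\Mod_{\cE})$.

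It remains to replace $\cE$ by $A_x$. Quasi-isomorphic DG algebras have Quillen equivalent module categories, by restriction and extension of scalars. Proposition~\ref{prop:derived-endomorphisms} already identifies $\cE$ with $A_x$ in $D(R)$, but this identification is only as complexes, and the main obstacle is upgrading it to a zig-zag of DG algebra quasi-isomorphisms.

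The first thing I would try is to build $E_x$ with a strict right $A_x$-action. The monoid $\varOmega_x\cX$ acts on $\cE_x\cX$ by precomposition of fences: given a loop $\ell$ and a path $z$, form $\ell * z$. Monotonicity and well-definedness follow exactly as in Proposition~\ref{prop:loop-monoid}. This action commutes with $p=\ev_1$, so it is fiberwise over $\cX$. Hence the bar-construction model of $\bbL p_*(\underline R)$, namely the chains on the nerves of the under-categories $(y\downarrow p)$, carries a strict action of $C_*(\cN\varOmega_x\cX)$ via Eilenberg--Zilber. This is where the opposite poset enters, since we want the action to match the composition order of endomorphisms.

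That action gives a DG algebra map $A_x\to\cE$ (after a harmless cofibrant replacement of $E_x$ as an $A_x$-module object). Composing with the evaluation $\cE\to E_x(x)\cong\uHom(\{\hx\}_R,E_x)$, it becomes the action map on the unit loop $(x)$. Under Proposition~\ref{prop:Ex-stalks} and the retraction of Proposition~\ref{prop:comma-retract}, this composite is the identity of $C_*(\cbB\varOmega_x\cX)$ up to homotopy. By Proposition~\ref{prop:derived-endomorphisms}, the map $A_x\to\cE$ is therefore a quasi-isomorphism of DG algebras.

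If making the action strict on a cofibrant model proves awkward, the fallback is the following. Both $\cE$ and $A_x$ are $A_\infty$-quasi-isomorphic to the derived endomorphism algebra of the generator, and one can invoke the $A_\infty$/DG rigidity of module categories. Either way the conclusion is $D_\hlc(\cX)\cong D(\Mod_{A_x})$, with the Quillen equivalence obtained by composing the two Quillen equivalences above.
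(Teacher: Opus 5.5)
Your proof is correct and has the same skeleton as the paper's: Schwede--Shipley Morita theory applied to the compact generator of Proposition~\ref{prop:hlc-generator}, realized by the homotopy locally constant object $E_x$. The real difference is the last step.

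The paper's proof cites Proposition~\ref{prop:derived-endomorphisms} as identifying the derived endomorphism DG algebra with $A_x$. That proposition is only an isomorphism in $D(R)$, while the Morita theorem produces modules over $\uHom(E_x,E_x)$ with its composition product. So a multiplicative comparison is needed, and the paper does not give one. Your argument supplies it:
\begin{itemize}
\item The action $(\ell,z)\mapsto \ell*z$ of $\varOmega_x\cX$ on $\cE_x\cX$ lies over $\ev_1$. It therefore makes the bar model of $E_x=\bbL p_*(\underline R)$ a strict DG module and gives a DG-algebra map into $\uHom(E_x,E_x)$.
\item Composed with $\eta_x^*\colon \uHom(E_x,E_x)\to \uHom(\{\hx\}_R,E_x)\cong E_x(x)$, this map is the chain map induced by the inclusion $\iota_x$ of Proposition~\ref{prop:comma-retract}, hence a quasi-isomorphism.
\item The map $\eta_x^*$ is itself a quasi-isomorphism by the argument of Proposition~\ref{prop:Ex-envelope} with $L=E_x$, so your map is a quasi-isomorphism by two-out-of-three.
\end{itemize}
The paper's route is a one-line citation that, as written, matches the endomorphism algebra only additively. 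Yours produces an actual DG-algebra quasi-isomorphism and explicit Quillen functors $\uHom(E_x,-)$ and $-\otimes_{A_x}E_x$.

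A few small corrections.
\begin{itemize}
\item The bar model of $E_x$ is already cofibrant in $\cM$. It is bounded below, and in each degree it is a direct sum of elementary projectives $\{\widehat{p(z)}\}_R$, one for each nondegenerate chain of fences with minimal element $z$. So you can act on $E_x$ itself and need neither $QE_x$ nor a module-level replacement.
\item What acts strictly is the nerve-chain algebra $C_*(\cN(\varOmega_x\cX\op);R)$, not $A_x$ itself. You then need the standard DG-algebra quasi-isomorphism $C_*(\cN(\varOmega_x\cX\op);R)\to A_x$, induced by the map of simplicial monoids $K\to \mathrm{Sing}|K|$.
\item The op in $A_x$ reverses the order of the poset, not the multiplication. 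It appears because the comma category computing $\Lan_{p\op}$ at $y$ is $(y\downarrow p)\op$. The left action $\ell*z$ already gives an algebra map for the composition product, so no opposite multiplication is needed.
\item Drop the $A_\infty$ fallback. It presupposes that $A_x$ is $A_\infty$-quasi-isomorphic to the derived endomorphism algebra, which is exactly what has to be proved.
\end{itemize}
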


\begin{proof}[Proof]
By Proposition~\ref{prop:hlc-generator}, the image of $\{\hx\}_R$ is a compact generator of $D_\hlc(\cX)$. Proposition~\ref{prop:derived-endomorphisms} identifies its derived endomorphism DG algebra with $A_x$. Standard derived Morita theory for algebraic stable model categories then upgrades this identification to a Quillen equivalence between the localized model category and DG modules over $A_x$, and hence to an equivalence of homotopy categories \cite[Theorem~3.1.1 and 3.3.3]{SchwedeShipley}.
\end{proof}

\end{document}